\renewcommand\normalsize{%
    \@setfontsize\normalsize{9}{14pt plus .3pt minus .3pt}%
    \abovedisplayskip 10\p@ \@plus4\p@ \@minus4\p@
    \abovedisplayshortskip 6\p@ \@plus2\p@
    \belowdisplayshortskip 6\p@ \@plus2\p@
    \belowdisplayskip \abovedisplayskip}
\renewcommand\small{%
    \@setfontsize\small{9.5}{12\p@ plus .2\p@ minus .2\p@}%
    \abovedisplayskip 8.5\p@ \@plus4\p@ \@minus1\p@
    \belowdisplayskip \abovedisplayskip
    \abovedisplayshortskip \abovedisplayskip
    \belowdisplayshortskip \abovedisplayskip}
\DeclarePairedDelimiter\ceil{\lceil}{\rceil}
\definecolor{gold}{rgb}{0.85,0.65,0}
\theoremstyle{plain}
\newtheorem{thm}{Theorem}[section]
\newtheorem{lem}[thm]{Lemma}
\newtheorem{prop}[thm]{Proposition}
\newtheorem{cor}[thm]{Corollary}
\newtheorem{conj}[thm]{Conjecture}
\newtheorem{proposal}[thm]{Proposal}
\theoremstyle{definition}
\newtheorem{df}[thm]{Definition}
\theoremstyle{remark}
\newtheorem{example}[thm]{Example}
\newtheorem{rmk}[thm]{Remark}
\def\a{\mathfrak{a}}
\def\pt{\mathrm{pt}}
\def\Im{\mathfrak{Im}}
\def\Re{\mathfrak{Re}}
\def\ch{\mathrm{ch}}
\def\Ch{\mathrm{Ch}}
\def\End{\mathrm{End}}
\def\Z{\mathbb{Z}}
\def\Ker{\mathrm{Ker}}
\def\Stab{\mathrm{Stab}}
\def\H{\mathrm{H}}
\def\GL{\mathrm{GL}}
\def\Hom{\mathrm{Hom}}
\def\Ext{\mathrm{Ext}}
\def\o{\mathcal{O}}
\def\pr{\mathbb{P}}
\def\Q{\mathbb{Q}}
\newcommand{\R}{\mathbb{R}}
\newcommand{\C}{\mathbb{C}}
\newcommand{\ir}{\sqrt{-1}}
\newcommand{\slicing}{{\mathcal{P}}}
\newcommand{\D}{\mathrm{D}}
\newcommand{\A}{\mathcal{A}}
\newcommand{\F}{\mathcal{F}}
\newcommand{\map}{\longrightarrow}
\DeclarePairedDelimiter\norm{\lVert}{\rVert}
\author[V.~Zuliani]{Vanja Zuliani}
\address{\parbox{0.9\textwidth}{Universit\'e Paris-Saclay, CNRS, Laboratoire de Math\'ematiques d'Orsay\\[1pt]
Rue Michel Magat, B\^at. 307, 91405 Orsay, France
\vspace{1mm}}}
\email{{vanja.zuliani@universite-paris-saclay.fr}}
\title{Semiorthogonal decompositions of projective spaces from small quantum cohomology}
\begin{document}
\begin{abstract}
    In a recent article Halpern-Leistner defines the notion of quasi-convergent path in the space of Bridgeland stability conditions. Such a path induces a semiorthogonal decomposition of the derived category. 
    We investigate quasi-convergent paths in the stability manifold of projective spaces  and answer positively to two questions posed by Halpern-Leistner. 
    We construct  quasi-convergent paths that start from the geometric region of the stability space and whose central charge is given by a fundamental solution of the quantum differential equation.
    We also construct quasi-convergent paths whose central charges are the quantum cohomology central charges defined by Iritani.
\end{abstract}
\maketitle

\tableofcontents

\section{Introduction}

Let $X$ be a smooth projective variety over the complex numbers and $\mathrm{D^b}(X)$ the bounded derived category of coherent sheaves on $X$.
The goal of this note is to partially clarify the relations between the following mathematical objects:

\begin{itemize}
    \item [1] the semiorthogonal decompositions (SODs) of $\mathrm{D^b}(X)$,
    \item [2] the quantum cohomology of $X$,
    \item [3] the paths in the space of stability conditions $\Stab(\mathrm{D^b}(X))$.
\end{itemize}
We focus on the example of projective spaces $X=\pr^{N-1}$.
Before stating our results let us recall some general definitions and conjectures.

In \cite{Ballard_2015,Diemer_2016} the authors investigate homological mirror symmetry for toric varieties. 
For a projective toric variety $T$ with mild singularities they show that certain variations of the Landau--Ginzburg model of $T$ give  semiorthogonal decompositions of the Fukaya--Seidel category; by homological mirror symmetry this SOD should correspond to a SOD of $\mathrm{D^b}(T)$.
In \cite{DHL23noncomm} the degeneration of the Landau--Ginzburg model is reinterpreted as a path in the space of Bridgeland stability conditions.
This reinterpretation leads to a general definition of \text{quasi-convergent path} 
\begin{equation*}
\begin{split}
    \sigma_{\bullet} : (0,\infty)&\to \Stab(\mathrm{D^b}(X))\\
    r&\mapsto \sigma_r=(Z_r,\slicing_r) 
\end{split}
\end{equation*} 
see Definition~\ref{def_quaiconpath}.
The crucial property of a quasi-convergent path is that it gives a SOD of $\mathrm{D^b}(X)$, see Proposition~\ref{Proposition_2.20_HLJR}.

In order to investigate rationality questions in birational geometry, it would be helpful to have some sort of distinguished SOD. 
Kuznetsov observes that there should be a way to define a birational invariant of a variety $X$ using the SODs of $\mathrm{D^b}(X)$; he introduces the notion of Griffiths component, shows that it is not well defined and he gives some insights on how to fix this failure, see \cite{Kuznetsov_2016_rationality,kuznetsov2013simple}.
The Griffiths components are the "big" components in a SOD of $\mathrm{D^b}(X)$ that cannot be refined.
If the number of Griffiths components in the SODs of $\mathrm{D^b}(X)$ is well defined then it is a birational invariant and for instance, it would apply to the problem of rationality of cubic hypersurfaces.
The derived category of projective spaces have full exceptional collections, so they have no Griffiths components.
We could redefine the Griffiths components by considering only the SODs that are given by some special kind of quasi-convergent paths and that cannot be refined.
In particular the components of an SOD given by a quasi-convergent path have a stability condition.
We also note that in order to prove that the Griffiths component is not well defined, Kuznetsov exhibits the derived category of a rational threefold $R$ and a SOD of $\mathrm{\D^b}(R)$ with a component $\A$ that is "big" and it does not admit any Serre invariant stability condition, see \cite{sung2022remarks, alex2014semiorthogonal,Kuznetsov_2009,Kuznetsov_2016_rationality}.
Conjecturally $\A$ does not admit any stability condition and so in particular it could not appear as a component in a SOD given by a quasi-convergent path.

As we already noted a distinguished quasi-convergent path would give a distinguished SOD.
Halpern-Leistner asks whether there are quasi-convergent paths $\sigma_{\bullet}=(Z_{\bullet},\slicing_{\bullet})$ where $Z_{\bullet}$ is the quantum cohomology central charge defined by Iritani in \cite{Iritani_2009}.
For a smooth Fano variety $F$ and 
$ B\subseteq \H^{\bullet}(F,\C)$ the domain of convergence of the Gromov--Witten potential we consider the meromorphic Dubrovin connection $\Tilde{\nabla}$ on
the trivial vector bundle 
\begin{equation*}
    \H^{\bullet}(F,\C)\times(B\times \pr^1)\to B\times \pr^1.
\end{equation*}
The connection $\Tilde{\nabla}$ is regular on $B\times\C^*$ and for each $\tau\in B$ there is a canonical fundamental solution of flat sections of $\Tilde{\nabla}_{|_{\tau}}$ denoted as follows
\begin{equation*}
    \Phi^{\tau} : \C^*\to \End(\H^{\bullet}(F,\C))
\end{equation*}
see \cite{dubrovin1994geometry,dubrovin1998painleve,Galkin_2016}.
We recall that the Gamma class $\hat{\Gamma}$ of $F$ is
\begin{equation*}
    \hat{\Gamma}:=\Pi_i\Gamma(1+\delta_i) \in \H^{\bullet}(F,\R)
\end{equation*}
where $\delta_i$ are the Chern roots of the tangent bundle of $F$ and $\Gamma$ is the Euler's Gamma function.
The \emph{quantum cohomology central charge} is defined in general as 
\begin{equation}\label{quantum_central_charge_intro}
    Z(V):=(2\pi z)^{\frac{\dim F}{2}}\int_{F}\Phi^0(\hat{\Gamma}\cdot\Ch(V)) \text{ where } \Ch:=(2\pi\ir)^{\deg/2}\ch
\end{equation}
for any $V\in \mathrm{D^b}(F)$, see \cite[Paragraph 3.8]{Galkin_2016}{}.
We state a particular case of \cite[Proposal III]{DHL23noncomm}{}.
\begin{proposal}[{\cite[Proposal III]{DHL23noncomm}}]\label{proposalIII_DHL_intro}
    Let us consider a smooth Fano variety $F$.
    Then there exists a fundamental solution $\Phi'$ of $\Tilde{\nabla}_{|_{\tau}}$ for some $\tau\in B$ and a quasi-convergent path
    $\sigma_r=(Z', \slicing_r),$ $r\in\R_{>0}$ in $\Stab(\mathrm{D^b}(F))$ with
    \begin{equation*}
    Z'(\alpha):=\int_{F}\Phi'(\alpha)
    \end{equation*}
    for $\alpha\in \H^{\bullet}(F,\C)$.
    We omit here a technical condition, for a complete statement see Proposal~\ref{proposalIII_DHL}.
\end{proposal}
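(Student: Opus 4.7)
The plan is to combine three ingredients: the analytic theory of the Dubrovin meromorphic connection $\Tilde{\nabla}$ together with its irregular/Stokes structure at $z=0$, Bridgeland's deformation theorem for stability conditions, and Halpern-Leistner's criteria for quasi-convergence from Definition~\ref{def_quaiconpath}. Since we are free to choose both the base point $\tau\in B$ and the particular fundamental solution $\Phi'$ of $\Tilde{\nabla}_{|_{\tau}}=0$, we exploit this freedom so that $Z'$ evaluated at one end of the candidate path lies in the geometric chamber of $\Stab(\mathrm{D^b}(F))$, while at the other end $Z'$ degenerates in a way that exhibits a semiorthogonal decomposition via Proposition~\ref{Proposition_2.20_HLJR}.

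First I would fix $\tau\in B$ with $\Im(\tau)$ deep inside the ample cone of $F$, so that the canonical fundamental solution $\Phi^{\tau}(z)$ is well understood at the regular singularity $z\to\infty$: its leading asymptotic term recovers the classical datum $\hat{\Gamma}\cdot \Ch(V)$, and so for $|z|$ sufficiently large the central charge $V\mapsto \int_F \Phi^{\tau}(z)\,\Ch(V)$ lies in the geometric region of $\Stab(\mathrm{D^b}(F))$, conditionally on this region being nonempty. I would then parametrize a curve $r\mapsto z(r)\in\C^*$ leaving this regime and moving toward the irregular singularity at $z=0$ along a carefully chosen Stokes ray; setting $\Phi':=\Phi^{\tau}$, the candidate path is
\begin{equation*}
\sigma_r=(Z_r,\slicing_r),\qquad Z_r(V):=\int_F \Phi'(z(r))\,\Ch(V),
\end{equation*}
with the slicings $\slicing_r$ produced by applying Bridgeland's deformation theorem inductively along the path, starting from a geometric stability condition at $r\gg 0$. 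The main analytic input at this stage is a uniform support property, which one would derive from the asymptotic bounds on $\Phi^{\tau}$.

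Quasi-convergence is then verified by analyzing the behavior of $\Phi^{\tau}(z(r))$ as $r\to 0$. At the irregular singularity, flat sections decompose along each Stokes sector according to the spectrum of quantum multiplication by $c_1(F)$, producing a filtration of $\H^{\bullet}(F,\C)$ that, in the spirit of the Dubrovin and Galkin--Golyshev--Iritani conjectures, should match the combinatorics of a semiorthogonal decomposition of $\mathrm{D^b}(F)$; translating this filtration into mass-ratio estimates on Harder--Narasimhan factors gives the axioms of quasi-convergence. The hardest step, by a considerable margin, is the existence of a geometric chamber of $\Stab(\mathrm{D^b}(F))$ for an arbitrary smooth Fano $F$: such a chamber is known only in low-dimensional or highly symmetric cases (curves, surfaces, Fano threefolds, a handful of higher-dimensional examples), so the construction above must either be run conditionally on its existence or restricted to subclasses of Fanos where tilt-stability techniques apply. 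This is the reason the statement is phrased as a Proposal rather than a theorem, and it is also why the body of this paper restricts attention to the concrete case $F=\pr^{N-1}$, where the stability manifold is explicitly understood.
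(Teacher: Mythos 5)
Your strategy is genuinely different from the paper's, and its two load-bearing steps are exactly the ones that fail or remain unjustified. First, you take as input the existence of a geometric stability condition realizing $\int_F\Phi^{\tau}(z)\,\Ch(\cdot)$ for large $|z|$, and then propagate slicings by Bridgeland deformation along the path. Even in the only case where the paper actually fulfills the Proposal, $F=\pr^{N-1}$, this input is unavailable for $\dim F\geq 4$: slope/tilt ("slope-kind") constructions of geometric stability conditions are only known in low dimension, and the paper never uses them. Its construction runs entirely inside the algebraic part: Gamma Conjecture II for projective spaces gives the asymptotics $\ln Z^{\tau}(E_j)\sim\frac{\dim F}{2}\ln(2\pi r)-u_j(\tau)/r$ for a full strong exceptional collection (Proposition~\ref{prop_extimated_Ztau}), and then for \emph{each} $r$ a stability condition is produced directly by Macr\`i's theorem with prescribed masses and phases (Theorem~\ref{thm_stabcond_fromexc}); quasi-convergence as $r\to0^+$ follows because the phase gaps diverge, so eventually only the $E_j$ and their shifts are stable (Proposition~\ref{prop_pure_alg_stabcond}). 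Geometricity at the start of the path is an \emph{output}, not an input: for $\{\Omega^i(i)\}$, phases within distance one force skyscrapers to be stable (Proposition~\ref{prop_beilinsonexc_geom_stab}, Corollary~\ref{cor_path_starts_in_geom_proj}). By contrast, your inductive use of the deformation theorem only gives local extension; you would still need a uniform support constant and an argument that the central-charge path never leaves the image of $\Stab(\mathrm{D^b}(F))$, which you only gesture at.

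Second, you fix $\Phi':=\Phi^{\tau}$, the canonical solution, i.e.\ the quantum cohomology central charge itself, thereby discarding precisely the freedom that the Proposal grants and that the paper exploits. In Theorem~\ref{thm_gammaII_proposalIII} the path's central charge is built from $e^{\ir\beta_j}Z^{\tau}(E_j)$, that is, from a \emph{different} fundamental solution obtained by rescaling the asymptotic basis; the constants $\beta_j$ are what make the condition $\ceil{\phi_i}<\phi_{i+1}$ of Theorem~\ref{thm_stabcond_fromexc} hold along the whole path and make the phases close (hence the start geometric) near $r\sim\delta$. If one insists on the canonical charge, a nontrivial compatibility between the ordering of the imaginary parts $\Im(-u_j(\tau))$ and the Hom-directions of the exceptional collection must hold (Corollary~\ref{cor_increasing}), and the paper's Corollary~\ref{cor_neg_conj_pr} shows this can genuinely fail: for the collection mutated from $\{\o,\dots,\o(N-1)\}$ at $\tau=0$ there is \emph{no} quasi-convergent path with those objects limit semistable. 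Theorem~\ref{thm_conj_pr} rescues the canonical charge only through a specific choice of helix foundation and of $\tau$. Your "carefully chosen Stokes ray" does not address this ordering obstruction, and your appeal to the Stokes filtration "in the spirit of" Dubrovin--GGI quietly assumes Gamma Conjecture II, which is the concrete input (known for $\pr^{N-1}$, conjectural in general) on which the whole quantitative verification of quasi-convergence rests.
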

In \cite[Remark 13]{DHL23noncomm} the proposed candidate for $Z'$ is the quantum cohomology central charge~\eqref{quantum_central_charge_intro}.
We observe that we can generalize the notion of quantum cohomology central charge simply by replacing $\Phi^0$ in~\eqref{quantum_central_charge_intro} with $\Phi^{\tau}$, for any $\tau\in B$.
So, by deforming the quantum cohomology central charge, we state the following 

\begin{conj}\label{conj_sigma_from_qcoh_intro}
    Let us consider a Fano variety $F$, $\tau\in  \H^{2}(F,\C)$, $\Phi^{\tau}$ the fundamental solution of $\Tilde{\nabla}_{|_{\tau}}$ and the induced isomorphism
    \begin{equation*}\label{eq_iso_Phi_tau_intro}
\begin{split}
    \Phi^{\tau} : \H^{\bullet}(F)&\overset{\cong}{\longrightarrow} \left\{ s:\R_{> 0}\to \H^{\bullet}(F): \Tilde{\nabla}_{|_{\tau}} s=0   \right\}\\
    \alpha&\longmapsto \Phi^{\tau}\alpha
\end{split}
\end{equation*}
see Proposition~\ref{prop_GGI2.3.1} and Remark~\ref{rmk_can_sol_fortau_non_0}.
    Then there exists a quasi-convergent path
    $\sigma_r=(Z^{\tau}, \slicing_r),$ $r\in\R_{>0}$ with
    \begin{equation*}
        Z^{\tau}(V):=(2\pi r)^{\frac{\mathrm{dim}(F)}{2}}\int_{F}\Phi^{\tau}(\hat{\Gamma}\cdot\Ch(V)).
    \end{equation*}
\end{conj}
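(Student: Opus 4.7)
The plan is to reduce the case of general $\tau\in \H^2(F,\C)$ to the base case $\tau=0$ via the divisor equation of genus-zero Gromov--Witten theory, and then to construct the quasi-convergent path at $\tau=0$ by asymptotic analysis of the fundamental solution $\Phi^0$ at the large-volume limit. For the reduction, write $\tau=B+\ir\omega$ with $B,\omega\in \H^2(F,\R)$. By the divisor equation, the dependence of $\Phi^\tau$ on $\tau\in \H^2$ is essentially through the factor $e^{-\tau/z}$ (up to Gromov--Witten corrections holomorphic in a suitable punctured disc): shifting $B$ by an integral class thus amounts to tensoring by a line bundle in $\mathrm{D^b}(F)$, and this autoequivalence carries quasi-convergent paths to quasi-convergent paths. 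For non-integral $B$ the corresponding central-charge deformation is continuous and should lift to a continuous deformation of the path via Bridgeland's deformation theorem. The imaginary direction $\omega$ can be absorbed into a rescaling of the parameter $r$ along the path, so that the $\omega$-family of paths sweeps out a two-dimensional family with a common asymptotic structure.

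Concretely, for $\tau=0$ I propose to construct $\sigma_r$ by starting from a geometric Bridgeland stability condition $\sigma_{0,\omega_0}$ with $\omega_0$ sufficiently ample in the positivity cone, and then letting $r$ parameterize a flow whose central charge agrees with $Z^0$ in the sense of~\eqref{quantum_central_charge_intro}. The asymptotic behaviour of $\Phi^0(\hat{\Gamma}\cdot\Ch(V))$ as $r\to\infty$ is controlled by the spectrum $\{u_1,\dots,u_N\}$ of quantum multiplication by $c_1(F)$ at $\tau=0$: each eigenvalue contributes a term of the form $e^{u_i/r}\cdot r^{\mu_i}\cdot (\text{polynomial in }\log r)$ times a flat section prescribed by the Stokes structure. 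Quasi-convergence then reduces to showing that (i) the Harder--Narasimhan factors of every $V\in \mathrm{D^b}(F)$ with respect to $\sigma_r$ stabilise along the flow, and (ii) the limiting filtration agrees with the spectral decomposition of $V$ along the eigenvalues of $c_1(F)\star$. Conjecture O of Galkin--Golyshev--Iritani is expected to guarantee the needed non-degeneracy of the spectrum, and the Gamma II conjecture predicts that the resulting filtration coincides with an SOD of $\mathrm{D^b}(F)$.

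The main obstacle is twofold. First, the existence of \emph{any} Bridgeland stability condition on $\mathrm{D^b}(F)$ is open for a general Fano $F$ of dimension at least three, so the very starting point of the proposed flow is only conjectural; an unconditional argument along these lines must assume that $\Stab(\mathrm{D^b}(F))$ contains a geometric region, which restricts the scope of the approach to those Fanos where this is known. Second, and more seriously, identifying the spectral decomposition of $c_1(F)\star$ with an honest SOD of $\mathrm{D^b}(F)$ is essentially a form of the Dubrovin--Kontsevich conjecture, currently verified only in scattered examples (projective spaces, Grassmannians, some Fano threefolds). I expect the matching of this quantum-cohomological spectral data with categorical admissible decompositions to be the true source of difficulty in establishing the conjecture in the stated generality, and a realistic first step is to verify the strategy rigorously for classes of Fanos where both the stability manifold and the Stokes structure are explicitly computable.
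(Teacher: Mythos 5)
Your proposal does not prove the statement, and the gap is structural rather than cosmetic. The statement is a conjecture for a general Fano $F$; the paper itself establishes it only for $F=\pr^{N-1}$ (Theorem~\ref{thm_conj_pr}), and your outline openly rests on inputs that are themselves open in the stated generality (existence of a geometric region in $\Stab(\mathrm{D^b}(F))$, a Dubrovin/Gamma~II--type identification of the spectral decomposition of $c_1(F)\star$ with an SOD). So even read charitably as a conditional argument, it does not close the conjecture; and restricted to the one case where an unconditional proof exists, it misses the two mechanisms that actually make that proof work.

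Concretely: first, the reduction of $\tau\in\H^2(F,\C)$ to $\tau=0$ fails. For $\pr^{N-1}$ the eigenvalues are $u_j(\tau)=\zeta^{j}Ne^{\tau/N}$ with $\zeta=e^{2\pi\ir/N}$, so varying $\Im(\tau)$ rotates the spectrum: it changes which imaginary parts $\Im(-u_j(\tau))$ coincide, changes their ordering, and forces mutations of the associated exceptional collection. This cannot be absorbed into a rescaling of $r$, and indeed the paper stresses that the asymptotics of $Z^{\tau}$ (Proposition~\ref{prop_extimated_Ztau}) genuinely depend on $\tau$; the degenerate values of $\tau$ where imaginary parts collide are handled by a separate deformation argument, using the uniform support constant of Remark~\ref{rmk_alg_stab} together with Bayer's deformation theorem, not by reparametrization. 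Second, even at the base point the choice of exceptional collection is the crux: Corollary~\ref{cor_neg_conj_pr} shows that for the collection obtained by mutation from $\{\o,\dots,\o(N-1)\}$ (the one Gamma~II at $\tau_0=0$ naturally produces) there is \emph{no} quasi-convergent path with central charge $Z^{\tau}$ making those objects limit semistable, because $E_j\mapsto \Im(-u_j)$ fails to be increasing in the sense of Corollary~\ref{cor_increasing}. The proof instead uses the shifted helix foundation $\left\{\o\left(\ceil*{\frac{N}{2}}\right),\dots,\o\left(\ceil*{\frac{N}{2}}+N-1\right)\right\}$ (Remark~\ref{rmk_shifted_beil_exc_collection}) and constructs the path entirely inside the algebraic region via Macr{\`i}'s construction (Theorem~\ref{thm_stabcond_fromexc}, refined in Theorem~\ref{thm_gammaII_proposalIII}, with purity from Proposition~\ref{prop_pure_alg_stabcond}); geometricity of the starting point is verified afterwards for the Beilinson collection, rather than being the seed of a flow as in your plan. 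Without the correct choice of foundation and the explicit phase bookkeeping of Theorem~\ref{thm_gammaII_proposalIII}, your scheme does not produce the required path even for projective spaces.
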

We prove the conjecture for all projective spaces, see Theorem~\ref{thm_conj_pr}.
For the projective plane $\pr^2$ we prove that there is a quasi-convergent path as in Conjecture~\ref{conj_sigma_from_qcoh_intro} such that the induced SOD is $\{\o,\o(1),\o(2)\}$, see Theorem~\ref{thm_conj_P2}.

There are two standard constructions of Bridgeland  stability conditions: 
one uses the notion of Gieseker slope and iteration of tilting and the other uses a full exceptional collection of the derived category.
The first produces stability conditions that are usually called geometric and the skyscrapers sheaves of points are stable with respect to those stability conditions. 
We will use the terminology "geometric stability condition" for  a stability condition for which all skyscraper sheaves of points are stable of the same phase.
The second construction produces stability conditions that are called algebraic stability conditions. 
Algebraic stability conditions were constructed in \cite{Macri07StabCurves}, our Theorem~\ref{thm_stabcond_fromexc} is a refined version of the original construction. 
It is conjectured that there are quasi-convergent paths that start in the geometric part of the stability manifold and end in the algebraic part.
If we had a good description of such paths we could start from a geometric part of the stability manifold and classify the semiorthogonal decompositions whose components admits stability conditions.
In \cite[Remark 14]{DHL23noncomm}{} it is asked whether quasi-convergent paths arising in Proposal~\ref{proposalIII_DHL_intro}  start in the geometric part and end in the algebraic part.
In Theorem~\ref{thm_proposalIII_proj} and Corollary~\ref{cor_path_starts_in_geom_proj} we construct 
quasi-convergent paths in the stability manifolds of projective spaces that satisfies such properties, so we fulfill Proposal~\ref{proposalIII_DHL_intro} and we answer positively the question in \cite[Remark 14]{DHL23noncomm}{}. 

Let us recall some known results: Proposal~\ref{proposalIII_DHL_intro} for curves is fulfilled, see \cite{DHL23noncomm}, moreover \cite[Proposition 25]{DHL23noncomm}{} shows that for all Fano varieties that satisfies \cite[Gamma conjecture II]{Galkin_2016}{} the Proposal~\ref{proposalIII_DHL_intro} is true.
Our Theorem~\ref{thm_gammaII_proposalIII} is a refinement of \cite[Proposition 25]{DHL23noncomm}{}, it permits us to conclude that in some specific cases the path of stability conditions starts in the geometric region.

\subsection*{Acknowledgments}
I would like to thank my advisor  Emanuele Macr{\`i} for suggesting me this problem and for the support  during the work and the writing of this article. 
I would like to also thank Veronica Fantini and Daniel Halpern-Leistner for helpful conversations. 
Moreover I thank Zhirui Li and the anonymous referee for pointing out some typos in a previous version of the paper.
The author was supported by the ERC Synergy Grant 854361 HyperK.

\section{Quasi-convergent paths}

In this section we review  some results on the correspondence between certain paths in the space of stability conditions and  semiorthogonal decompositions of the derived category, see \cite[Section 2]{leistner2023stability}{}.

\subsection{Bridgeland stability conditions}
Let us denote by $\D$ a triangulated category.
We will denote by $K(\D)$ the Grothendieck group of $\D$ and let us fix $\nu : K(\D)\map \Lambda$ a group homomorphism to a finite rank lattice.
We will follow the standard references \cite{2007_stab_cond_on_triang_cat,bayer2019short}.
\begin{df}[{\cite[Bridgeland stability condition]{2007_stab_cond_on_triang_cat,bayer2019short}{}}]\label{def_stab_cond}
    A \emph{Bridgeland stability condition} is a pair $\sigma=(Z,\slicing)$ where 
    $Z:\Lambda \to \C$ is a group homomorphism and
    $\slicing$ is a \emph{slicing} i.e.,  a collection of full subcategories $\slicing(\phi),$ $\phi\in \R$ of $\D$ such that
    \begin{itemize}
        \item [(a)] $\slicing(\phi+1)=\slicing(\phi)[1]$ for all $\phi\in \R$,
        \item [(b)] for $\phi_1<\phi_2$ and $E_i\in \slicing(\phi_i),$ $i=1,2$, we have $\Hom(E_2,E_1)=0$, and
        \item [(c)] for any $0\neq E\in \D$ there is a sequence of morphisms 
        $0=E_0\to E_1\to\cdots\to E_m=E$
        and a sequence of real numbers $\phi_1>\phi_2>\dots>\phi_m$ such that  $A_i:=\mathrm{cone}(E_{i-1}\to E_i)$ is in $\slicing(\phi_i)$ for $i=1,\dots,m$.
    \end{itemize}
    Moreover we ask that for any non zero $E\in \slicing(\phi)$ we have $Z(\nu(E))\in \R_{>0}e^{\pi \ir \phi}$ and that the following \emph{support condition} is satisfied
    \begin{equation*}
        \sup \left\{\frac{\norm{\nu(E)}}{|Z(\nu(E))|}|\text{ for } E \in \bigcup_{\phi\in\R}\slicing(\phi)\setminus\{0\} \right\}<+\infty.
    \end{equation*}
\end{df}

We denote by
$\Stab(\D)$ the set  of (Bridgeland) stability conditions on $\D$; it has a  natural structure of complex manifold of dimension $\mathrm{rk} (\Lambda)$ and the forgetful map $\mathcal{Z}:\Stab(\D)\to \Hom(\Lambda,\C)$ is a local biholomorphism.
We call $0\neq E\in \slicing(\phi)$ a  \emph{semistable} object of phase $\phi$.
For any non zero $E\in\D$ we call the sequence in Definition \autoref{def_stab_cond}{(c)} the \emph{Harder--Narasimhan} (HN) filtration of $E$, the cones $A_j$ are called the \emph{HN factors} and we denote by $\phi^{\pm}(E)$ the largest (resp. smallest) phase of the HN factors of $E$.

We will only treat the case when $\D$ is the bounded derived category of coherent sheaves of a smooth projective variety $X$.
We will choose $\Lambda:=\mathrm{im}(\ch:K(\D)\to \H^{\bullet}(X,\Q)),\nu=\ch$ and we will write $Z(E)$ instead of $Z(\nu(E))$ for $E\in \D$.

\subsection{Quasi convergent paths}
We will denote by $\sigma_{\bullet} : (0,\infty)\to \Stab(\D)$ a continuous map (path) and for any non zero object $E\in \D$ we denote by $\phi_t^{\pm}(E)$ the largest (resp. smallest) phase of the $\sigma_t$-HN filtration of $E$.
We introduce the notation for the mass $m_t(E):=\sum|Z_t(A_i^t)|$ where $A_i^t$ are the $\sigma_t$-HN factors of $E$, see Definition~\ref{def_stab_cond}(c).
We define an average phase $\phi_t(E):=\frac{1}{m_t(E)}\sum \phi_t^{+}(A_i^t)$.

We observe that an object $0\neq E\in \D$ is $\sigma-$semistable if and only if $\phi^+(E)=\phi^-(E)$, so   the natural notion of semistability associated to a path $\sigma_{\bullet}$ is the following one.
\begin{df}[Limit semistable object]
    A non zero object $E\in \D$ is called \emph{limit semistable} (with respect to $\sigma_{\bullet}$) if 
    \begin{equation*}
        \lim_{t\to \infty}(\phi_t^{+}(E)-\phi_t^{-}(E))=0.
    \end{equation*}
\end{df}

\begin{df}[{\cite[Quasi-convergent path]{leistner2023stability}}]\label{def_quaiconpath}
    A path $\sigma_{\bullet}$ in the space of stability conditions is called \emph{quasi-convergent} if:
\begin{enumerate}
    \item [(1)] for any $E\in \D$ there exists a sequence of morphisms
    $0=E_0\to E_1\to E_2 \to \cdots \to E_{n-1}\to E_n=E$ with limit semistable cones $G_i:=\mathrm{cone}(E_{i-1}\to E_i)$ such that 
    \begin{equation*}
        \liminf_{t\to \infty}(\phi_t(G_i)-\phi_t(G_{i+1}))>0
    \end{equation*}
    It is called the \emph{limit semistable filtration}.
    \item [(2)] Given a limit semistable object $E$ we  
    define the average logarithm 
    \begin{equation*}
        l_t(E):=\ln(|Z_t(E)|)+\ir \pi \phi_t(E).
    \end{equation*}
    For two limit semistable objects $E,F$
    we write $l_t(E|F):=l_t(E)-l_t(F)$ and we ask that there exists
    \begin{equation*}
        \lim_{t\to \infty}\frac{l_t(E|F)}{1+|l_t(E|F)|}.
    \end{equation*}
\end{enumerate}
\end{df}
We observe that the the first condition in the definition of quasi-convergent path asks for the existence of a generalized slicing where the order is not given by the order of $\R$, as in the definition of Bridgeland stability conditions, but by the asymptotics of the phases $\phi_t(E)$ as $t\to \infty$, for $E$ limit semistable.
Moreover if a path $\sigma_{\bullet}$ converges to a $\sigma\in\Stab(\D)$ then it is quasi-convergent, the limit semistable objects are the $\sigma-$semistable objects and the limit semistable filtration is the $\sigma-$HN filtration.
This suggests that we can interpret a quasi-convergent path that does not converge to a point in $\Stab(\D)$ as a point in the boundary of a compactification of $\Stab(\D)$.

\subsection{Semiorthogonal decomposition}

One of the most interesting features of a quasi-convergent path $\sigma_{\bullet}$ in $\Stab(\D)$ is that it gives a SOD of $\D$.
In order to understand this point we need to consider some equivalence relation on the set of limit semistable objects:
equivalent  limit semistable objects will generate the components of the SOD.

\begin{df}\label{def_SOD}
    A \emph{semiorthogonal decomposition} (SOD) of $\D$ is an ordered set of full triangulated subcategories $\{\D_1,\dots,\D_n\}$ such that 
    \begin{enumerate}
        \item for each $j<i$ and $E_j\in \D_j, E_i\in \D_i$ we have $\Hom^{\bullet}(E_i,E_j)=0$
        \item for each $E\in \D$ there exists a sequence of morphisms
        \begin{equation*}
            0=E_{n}\to E_{n-1}\to \cdots \to E_1\to E_0=E
        \end{equation*}
        such that $\mathrm{cone}(E_{j}\to E_{j-1})\in \D_j$.
    \end{enumerate}
\end{df}

\begin{df}[{\cite[Definition 2.16]{leistner2023stability}{}}]
    Given two limit semistable objects $E,F$ we will write $E\sim^{\text{inf}} F$ if  
    \begin{equation*}
        \liminf_{t\to \infty} (\phi_t(E)-\phi_t(F)) <+\infty.
    \end{equation*}
\end{df}
By \cite[Lemma 2.17]{leistner2023stability}, $\sim^{\text{inf}}$ is an equivalence relation on the set $\slicing$ of limit semistable objects.
We will denote by $\slicing /\sim^{\text{inf}}$ the set of equivalence classes, it is a finite set by  \cite[Example 2.42 and Lemma 2.35]{leistner2023stability}.

The asymptotics of the phases define a total order relation on the set $\slicing/\sim^{\text{inf}}$.
\begin{df}[{\cite[Definition 2.16]{leistner2023stability}{}}]
    Given two limit semistable objects $E,F$ we will write $E<^{\text{inf}} F$ if  
    \begin{equation*}
        \liminf_{t\to \infty} (\phi_t(F)-\phi_t(E))=+\infty.
    \end{equation*}
\end{df}
The relation $<^{\text{inf}}$ is a total order relation on $\slicing/\sim^{\text{inf}}$ by \cite[Lemma 2.17]{leistner2023stability}.

For $E\in \slicing$ we will denote by $\D^E$ the full subcategory of objects whose limit semistable HN factors are $\sim^{\text{inf}}$ equivalent to $E$.
Clearly if for $E,F\in \slicing$ we have  $E\sim^{\text{inf}} F$ then $\D^E=\D^F$.
The set $\{D^E|E\in \slicing/\sim^{\text{inf}}\}$ is finite and has a total order induced by $<^{\text{inf}}$.
\begin{lem}[{\cite[Proposition 2.20]{leistner2023stability}} ]\label{Proposition_2.20_HLJR}
For any $E\in \slicing$ the categories $\D^E$ are thick triangulated subcategories that gives an SOD
\begin{equation*}
    \D=\langle\D^E|E\in \slicing/\sim^{\text{inf}}\rangle
\end{equation*}
where the order of the components of the SOD is the one induced by $<^{\text{inf}}$.
\end{lem}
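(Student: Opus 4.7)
My plan is to verify, in turn, the three properties that make up the statement: semiorthogonality between the $\D^E$'s, the existence of an SOD-type filtration for every object of $\D$, and the thickness of each $\D^E$.

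For \emph{semiorthogonality}, I fix $\sim^{\text{inf}}$-classes with $E<^{\text{inf}}F$ and take $E'\in\D^E$, $F'\in\D^F$. Filtering each of $E'$ and $F'$ by its limit semistable HN filtration from Definition~\ref{def_quaiconpath}(1), and then running the associated long exact sequences of $\Hom$-spaces, I reduce the vanishing of $\Hom^{\bullet}(F',E')$ to the case where $F'$ and $E'$ are themselves limit semistable with $F'\sim^{\text{inf}}F$ and $E'\sim^{\text{inf}}E$. The relation $E<^{\text{inf}}F$, combined with these two equivalences and the structural properties of $\sim^{\text{inf}}$ and $<^{\text{inf}}$ from \cite[Lemma~2.17]{leistner2023stability}, then forces $\liminf_{t\to\infty}(\phi_t(F')-\phi_t(E'[n]))=+\infty$ for every $n\in\Z$. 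Because both $F'$ and $E'[n]$ are limit semistable, $\phi_t^{+}(\cdot)-\phi_t^{-}(\cdot)\to 0$ on each of them, so for $t$ large enough $\phi_t^{-}(F')>\phi_t^{+}(E'[n])$. The standard Hom-vanishing inside the single Bridgeland stability condition $\sigma_t$ therefore gives $\Hom(F',E'[n])=0$.

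For the \emph{filtration axiom}, given $E\in\D$ I start from the limit semistable filtration of $E$ provided by Definition~\ref{def_quaiconpath}(1), with cones $G_1,\dots,G_n$. I coarsen it by grouping consecutive $G_i$'s lying in the same $\sim^{\text{inf}}$-class into a single iterated cone $H_j$; by construction every limit semistable HN factor of $H_j$ lives in one fixed class, so $H_j\in\D^{[H_j]}$. Whenever two consecutive blocks $H_j,H_{j+1}$ lie in distinct $\sim^{\text{inf}}$-classes, the inequality $\liminf_t(\phi_t(G_i)-\phi_t(G_{i+1}))>0$ for the cones bridging them must in fact be $+\infty$, which places the corresponding blocks in the correct $<^{\text{inf}}$-order. \emph{Thickness} of each $\D^E$ reduces to closure under shifts (immediate from $\slicing(\phi+1)=\slicing(\phi)[1]$), cones, and direct summands; these follow by chasing the limit semistable HN filtrations through the octahedral axiom and observing that the $\sim^{\text{inf}}$-class of each HN factor is preserved under such operations.

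The main obstacle is the semiorthogonality step: one must convert the qualitative asymptotic relation $E<^{\text{inf}}F$ between $\sim^{\text{inf}}$-classes of arbitrary representatives into a quantitative phase gap between $F'$ and $E'[n]$ that survives the shift by an arbitrary integer $n$, and then control the phases of the actual $\sigma_t$-HN factors of $F'$ and $E'[n]$ tightly enough to apply Bridgeland's Hom-vanishing at some large $t$. This requires both the phase-collapse $\phi_t^{+}-\phi_t^{-}\to 0$ coming from limit semistability and the precise description of how $\sim^{\text{inf}}$ and $<^{\text{inf}}$ interact on limit semistable objects in a quasi-convergent path.
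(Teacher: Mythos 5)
First, a point of comparison: the paper does not prove this lemma at all — it is imported verbatim as \cite[Proposition 2.20]{leistner2023stability} — so there is no in-paper argument to measure you against; your sketch is in effect a reconstruction of the cited proof. On the two substantive axioms your outline is sound and follows the same strategy as the source: for semiorthogonality, reducing via the limit semistable filtrations and long exact sequences to a pair of limit semistable representatives, using \cite[Lemma 2.17]{leistner2023stability} to transport $E<^{\text{inf}}F$ to $E'<^{\text{inf}}F'$, noting that shifting by $n$ only changes the average phase by a bounded amount, and then combining the divergence of average phases with the collapse $\phi_t^{+}-\phi_t^{-}\to 0$ to get $\phi_t^{-}(F')>\phi_t^{+}(E'[n])$ at large $t$ is exactly the right mechanism; for the filtration axiom, coarsening the limit semistable filtration into $\sim^{\text{inf}}$-blocks and observing that non-equivalence of consecutive factors upgrades $\liminf>0$ to $\liminf=+\infty$ is also correct (modulo the well-definedness and dichotomy facts you correctly delegate to \cite[Lemma 2.17]{leistner2023stability}).

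The genuine gap is in the thickness step, specifically closure under direct summands. Chasing octahedra shows at best that an extension of objects of $\D^E$ admits \emph{some} filtration with factors in the class of $E$ (and even there you need uniqueness or reorderability of limit semistable filtrations to conclude membership in $\D^E$, since $\D^E$ is defined via the limit HN factors, which come with a prescribed asymptotic ordering); but the octahedral axiom says nothing about summands, and the assertion that "the $\sim^{\text{inf}}$-class of each HN factor is preserved" under passing from $X\oplus Y$ to $X$ is precisely what has to be proved. Two standard repairs: either argue at the level of $\sigma_t$-HN filtrations that the HN factors of a summand sit, phase by phase, inside those of the sum, so the asymptotic phase constraints descend to $X$ and $Y$; or, more formally, first establish semiorthogonality and the filtration axiom, note that the resulting filtrations are then unique and functorial, and deduce summand-closure (hence thickness) of each component from additivity of the induced projection functors. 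As written, the thickness claim is asserted rather than proved, and this is the one place where your argument would not survive scrutiny without an additional idea.
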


\section{Algebraic stability conditions}
In this section we review a construction of stability conditions due to Macr{\`i}, see \cite{Macri07StabCurves} for the details.
\subsection{Exceptional objects}

Consider the bounded derived category $\D$ of coherent sheaves of a smooth projective variety.
Let us define $\Hom^k(A,B):=\Hom(A,B[k])$ for objects $A,B\in \D$.
We also define $\Hom^{\bullet}(A,B):=\bigoplus_{k\in \Z}\Hom^k(A,B)$.
\begin{df}[Exceptional objects]
Let $E,E_1,\dots,E_n\in \D$ 
\begin{itemize}
    \item $E$ is called \emph{exceptional} if $\Hom^k(E,E)=0$ for $k\neq 0$ and $\Hom(E,E)=\C$,
    \item the ordered set 
    $\{E_1,\dots,E_n\}$ is called an \emph{exceptional collection} if all the objects are exceptional and $\Hom^{\bullet}(E_i,E_j)=0$ for $j<i$.
\end{itemize}
\end{df}

\begin{df}[\cite{Macri07StabCurves}]
    Let $\mathcal{E}:=\{E_1,\dots, E_n\}$ be an exceptional collection, we call $\mathcal{E}$
    \begin{itemize}
        \item \emph{strong} if $\Hom^k(E_i,E_j)=0$ for all $i,j$ and $k\neq 0$,
        \item \emph{full} (or \emph{complete}) if it generates $\D$ by shifts and extensions, 
        \item $\mathrm{Ext}$ if $\Hom^{\leq 0}(E_i,E_j)=0$ for all $i\neq j$.
    \end{itemize}
\end{df}

Two classical examples are due to Beilinson: the derived category of the projective space $\D^b(\pr^{N-1})$ has two standard full strong exceptional collections $\{\o,\o(1),\dots, \o(N-1)\}$ and $\{\Omega^{N-1}(N-1), \dots, \Omega, \o\}$.
Using the first exceptional collection we get that the category $\D^b(\pr^{N-1})$ is equivalent to the derived category of the quiver $T^{N-1}$ with $N$ vertices
$$X_0\map\cdots\map X_{N-1} $$
the arrows are $\phi_i^j:X_i \map X_{i+1}$ for $i=0, \dots , N-2$ and $j=0, \dots, N-1$ and relations $\phi^j_{i+1}\circ \phi^k_i=\phi^k_{i+1}\circ \phi^j_i$.

\begin{example}[Exceptional collections on $\pr^2$]\label{example_exc_coll_P2}
    In the case of $\pr^2$, by \cite{Gorodentsev1987ExceptionalVB,Drezet1985FibrsSE}
    all full exceptional collection of coherent sheaves are strong and equivalent, up to mutation and shift, to the Beilinson exceptional collection.
    Pirozkhov classified the admissible subcategories of $\mathrm{D^b}(\pr^2)$:  it is proved in \cite{pirozhkov2020admissible} that a full triangulated subcategory $\A\subseteq \mathrm{D^b}(\pr^2)$ whose inclusion have left and right adjoint is generated by a subset of 
    $\{E_0,E_1,E_2\}=\mathcal{E}$, where $\mathcal{E}$ is an exceptional collection obtained by mutation from $\{\o,\o(1),\o(2)\}$.
\end{example}

\subsection{Stability conditions from exceptional collections}
In this section we recall and refine a standard construction of stability conditions due to Macr{\`i}, see \cite{Macri07StabCurves}.
Such stability conditions can be constructed on any triangulated category with a full strong exceptional collection, they are called \emph{algebraic}.
Our improvement consists in giving a necessary and sufficient condition on the phases of the exceptional objects.
For a real number $x\in\R$ we denote by $\ceil{x}$ the smallest integer that is bigger or equal to $x$.

\begin{thm}[{\cite[Section 3]{Macri07StabCurves}}]\label{thm_stabcond_fromexc}
    Let $\mathcal{E}=\{E_0,\dots E_{n}\}$ be a full strong exceptional collection of $\D$. Then for any $m_i\in \R_{>0}, \phi_i\in \R,i=0, \dots,n$ such that $\ceil{\phi_i}<\phi_{i+1}, i=0, \dots n-1$ there exists a unique stability condition $\sigma=(Z,\slicing)$ such that $E_i$ are stable of phase $\phi_i$ and $Z(E_i)=m_ie^{\ir \pi \phi_i}$.
\end{thm}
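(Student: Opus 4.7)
The plan is to follow Macr\`i's original construction in \cite{Macri07StabCurves}: build the heart of the sought stability condition as the extension-closure of an Ext-exceptional collection obtained by shifting the given strong collection, and then define the central charge on the associated basis of the Grothendieck group. The refinement over Macr\`i's original result consists in identifying the sharp numerical hypothesis on the phases that makes such shifts possible, which is precisely $\lceil\phi_i\rceil<\phi_{i+1}$.

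First I would define, for each $i$, the unique integer $k_i$ with $\phi_i - k_i \in (0,1]$; equivalently, $k_i = \lceil\phi_i\rceil - 1$. The gap hypothesis $\lceil\phi_i\rceil < \phi_{i+1}$ then translates to $k_i + 1 < \phi_{i+1} \leq k_{i+1}+1$, and hence to the strict monotonicity $k_0 < k_1 < \dots < k_n$. A direct computation shows that the shifted collection $\mathcal{E}':=\{E_0[-k_0],\dots,E_n[-k_n]\}$ is Ext: for $j<i$ there is nothing to check since $\Hom^{\bullet}(E_i,E_j)=0$ identically, and for $j>i$ the identity
\begin{equation*}
    \Hom^p(E_i[-k_i], E_j[-k_j]) = \Hom^{p+k_i-k_j}(E_i,E_j),
\end{equation*}
combined with strong exceptionality (only the degree $0$ term can be nonzero on the right) and the inequality $k_i - k_j < 0$, yields the desired vanishing for all $p\leq 0$.

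Next I would invoke the standard fact (see \cite[Section 3]{Macri07StabCurves}) that the extension-closure $\mathcal{A}:=\langle\mathcal{E}'\rangle_{\mathrm{ext}}$ of a full Ext-exceptional collection is a heart of a bounded t-structure on $\D$, of finite length, with the $E_i[-k_i]$ as its simple objects. Fullness of $\mathcal{E}$ gives $K(\mathcal{A})=K(\D)$ with $\Z$-basis $\{[E_i]\}$, so the prescription $Z(E_i):=m_ie^{\ir\pi\phi_i}$ extends uniquely to a group homomorphism $Z\colon K(\D)\to\C$, which I then check factors through $\Lambda$ (automatic whenever $\nu$ identifies $\{[E_i]\}$ with a basis of $\Lambda\otimes\Q$). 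By construction, $Z(E_i[-k_i])=m_ie^{\ir\pi(\phi_i-k_i)}$ has argument in $(0,\pi]$, and since the Grothendieck class of any nonzero object of $\mathcal{A}$ is a nontrivial non-negative integer combination of the $[E_i[-k_i]]$, the pair $(Z,\mathcal{A})$ is a stability function on $\mathcal{A}$.

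The remaining axioms are routine. HN filtrations exist because $\mathcal{A}$ has finite length. The support condition follows because every $\sigma$-semistable object is an iterated extension of simples of a common phase and no cancellation occurs in the sum of their central charges, so $\norm{\nu(E)}/|Z(E)| \leq \max_i \norm{\nu(E_i)}/m_i$. Uniqueness holds because the heart is forced by the prescription that each $E_i$ be stable of phase $\phi_i$, and $Z$ by its values on a basis. The main technical point is the first step: a weaker gap condition, such as only $\phi_i<\phi_{i+1}$, would permit $k_i=k_{i+1}$ and a nonzero $\Hom^0(E_i[-k_i], E_j[-k_j])$, breaking the Ext property and the extension-closure heart. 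The sharp inequality $\lceil\phi_i\rceil<\phi_{i+1}$ is thus precisely what makes the construction go through.
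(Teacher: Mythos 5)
Your construction is essentially the paper's: shift by $k_i=\ceil{\phi_i}-1$ to obtain an Ext-exceptional collection, take its extension closure as a finite-length heart with simples $E_i[-k_i]$, define $Z$ on the basis $\{[E_i]\}$, and invoke Bridgeland's correspondence for existence, HN filtrations, stability of the $E_i$ and uniqueness. The genuine gap is in your justification of the support property. It is not true that every $\sigma$-semistable object is an iterated extension of simples of a common phase: the composition factors (in the finite-length heart) of a semistable object only satisfy ``phases of subobjects $\leq$ phase $\leq$ phases of quotients''. Concretely, take $\pr^1$ with $\mathcal{E}=\{\o,\o(1)\}$, $\phi_0=0.9$, $\phi_1=1.1$ (so $\ceil{\phi_0}=1<\phi_1$); the heart is $\langle \o,\o(1)[-1]\rangle$, and a nonzero class in $\Ext^1(\o,\o(1)[-1])=\Hom(\o,\o(1))$ gives the object $\o_{\pt}[-1]$, which is stable (this is exactly the mechanism of Proposition~\ref{prop_stab_fasi_vicine_generale}, used in Proposition~\ref{prop_beilinsonexc_geom_stab}) yet has composition factors $\o(1)[-1]$ and $\o$ of phases $0.1$ and $0.9$. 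For such objects partial cancellation in $Z(E)=\sum_i b_iZ(E_i[-k_i])$ occurs, and your claimed bound $\norm{\nu(E)}/|Z(E)|\le\max_i\norm{\nu(E_i)}/m_i$ can actually fail: with $m_0=m_1=1$ and $\phi_0,\phi_1$ close to $1$ (say $\phi_0=0.99$, $\phi_1=1.01$, still allowed by the hypothesis) one has $|Z(\o_{\pt}[-1])|=2\cos(0.49\pi)\ll 1$ while $\norm{\nu(\o_{\pt}[-1])}$ is of order $1$. (If instead by ``factors'' you meant the stable factors of the same phase, the argument is circular: those factors need not be among the $E_i[-k_i]$ --- witness $\o_{\pt}[-1]$ --- and bounding them uniformly is precisely what is to be proved.)

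The repair is the paper's convex-hull argument, which bounds the ratio for \emph{all} nonzero objects $Q$ of the heart, not only the semistable ones: writing $[Q]=\sum_i b_i[E_i[-k_i]]$ with $b_i\ge 0$, the point $Z(Q)/\sum_i b_i$ lies in the convex hull of the finitely many points $Z(E_i[-k_i])$, a compact set in the semiclosed upper half-plane not containing the origin, hence at positive distance $C''>0$ from it; therefore $|Z(Q)|\ge C''\sum_i b_i$, while $\norm{\nu(Q)}\le C'\sum_i b_i$ with $C'=\max_i\norm{\ch(E_i)}$, giving the uniform constant $C'/C''$. Note that the correct constant depends on the phases through this distance, not only on the masses via $\max_i\norm{\nu(E_i)}/m_i$. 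The remaining steps of your proposal (the Ext computation, the heart, the stability function, uniqueness via the forced heart and the forced central charge) coincide with the paper's proof and are fine.
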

\begin{proof}
    We observe that between subsequent $\phi_i$ there is always an integer,
    therefore the unique choice of integers $p_0,\dots,p_n\in \Z$ such that 
    $\phi_i+p_i\in (0,1]$ satisfies
    $p_0>p_1>\cdots>p_n$.
    It is also straightforward to check that 
    $\{E_0[p_0],\dots,E_{n}[p_n]\}$ is an $\Ext$ exceptional collection.

    We now follow \cite{Macri07StabCurves}.
    By induction on $n$ we can prove that  $\mathcal{Q}:=\langle  E_0[p_0],\dots,E_{n}[p_n] \rangle$ is a heart of a bounded $t-$structure, see \cite[Lemma 3.14]{Macri07StabCurves}, in particular it is abelian.
    We can prove (again by induction on $n$ and diagram chasing) that $\mathcal{Q}$ is of finite length  (i.e. artinian and noetherian).
    We define the central charge $Z : K(\mathcal{Q})\to \C$ by $Z(E_i[p_i]):=m_ie^{\pi\ir (\phi_i+p_i)}$ for $i=0,\dots,n$.
    Then by \cite[Propositions 2.4 and 5.3]{2007_stab_cond_on_triang_cat} we get a stability condition with central charge $Z$.
    Moreover $E_i[p_i]$ are the only simple objects in $\mathcal{Q}$: for any $i\in\{0,\dots,n\}$ all monomorphisms $Q\to E_i[p_i],$ $Q\in \mathcal{Q}$  are indeed isomorphisms.
    The last remark implies that the $E_i[p_i]$ are stable objects.
    The uniqueness in the claim follows form  \cite[Proposition 5.3]{2007_stab_cond_on_triang_cat}.
    
   Let us check the support property.
   We start by noticing that
   \begin{equation*}
           \sup \left\{\frac{\norm{\ch(E)}}{|Z(E)|}|\text{ for } E \text{ semistable} \right\}=\sup\left\{\frac{\norm{\ch(Q)}}{|Z(Q)|}|\text{ for } Q\in \mathcal{Q} \text{ semistable}\right\}
   \end{equation*}
   For any $Q\in \mathcal{Q}$ we have $\ch(Q)=\sum_{i=0}^n \ch (E_i[p_i]^{\oplus b_i})$ for $b_i\in \Z_{\geq 0}, i=0,\dots,n$ and 
   $Z(Q)=\sum_{j=0}^n b_j Z(E_j[p_j])$
   let us denote by $C':=\max{\norm{\ch(E_i)}}$.
   We observe that
   \begin{equation*}
       \norm{\ch(Q)}\leq C'\sum b_i
   \end{equation*}
    moreover 
    \begin{equation*}
        \frac{|Z(Q)|}{\sum b_i}= |\sum_j \frac{b_j}{\sum_i b_i}Z(E_j[p_j])|.
    \end{equation*}
    We note that the sum of $\alpha_j:=\frac{b_j}{\sum_i b_i},j=0,\dots n$ is one,
    so $\sum_j \alpha_j Z(E_j[p_j])$ is in the convex hull $\mathrm{Conv}(Z(E_j[p_j]))$ of $Z(E_j[p_j]),j=0,\dots,n$.
    The set $\mathrm{Conv}(Z(E_j[p_j]))$ is a compact polygon in the upper half plane, the distance between the polygon and the origin is $C'':=\min{m_i}>0$.
    The last remark shows that 
    \begin{equation*}
        \frac{|Z(Q)|}{\sum b_i}\geq C''.
    \end{equation*}
    We conclude that 
    \begin{equation*}
        \frac{\norm{\ch(Q)}}{|Z(Q)|}\leq \frac{C'}{|Z(Q)|}\sum b_i\leq \frac{C'}{C''}.
    \end{equation*}
\end{proof}

\begin{rmk}\label{rmk_alg_stab}
    Let us note that in the construction above the constant of the support property $\frac{C'}{C''}$ depends only on the masses of the excpetional objects and not on the phases.
\end{rmk}
Given a strong exceptional collection $\mathcal{E}=\{E_0,\dots,E_{n}\}$, following \cite{Macri07StabCurves}, we define the set 
$\Theta_{\mathcal{E}}$ of stability conditions arising from Theorem~\ref{thm_stabcond_fromexc} up to the $\Tilde{\GL}^+_2(\R)-$action.
By the deformation theorem it is quite clear that $\Theta_{\mathcal{E}}\subset \Stab(\D)$ is an open subset.
\begin{df}[Pure algebraic stability conditions]
    A stability condition $\sigma$ is called \emph{pure} relative to a full exceptional collection  
    $\{E_0,\dots,E_{n}\}$ if the only stable objects are $E_0,\dots,E_{n}$ and their shifts.
\end{df}
\begin{prop}[Pure algebraic stability conditions]\label{prop_pure_alg_stabcond}
    Let us consider $\sigma\in \Theta_{\mathcal{E}}$ where $\mathcal{E}=\{E_0,\dots,E_{n}\}$ is a strong exceptional collection and let us denote by $\phi_i:=\phi(E_i)$ the phases of the exceptional objects.
    If $\phi_{i+1}-\phi_i>2$ then the only stable objects are $E_j$ and their shifts. 
\end{prop}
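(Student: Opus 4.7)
The plan is to reduce the classification of $\sigma$-stable objects to understanding stability in the heart $\mathcal{Q}=\langle E_0[p_0],\dots,E_n[p_n]\rangle$ constructed in the proof of Theorem~\ref{thm_stabcond_fromexc}, and then to show that the hypothesis $\phi_{i+1}-\phi_i>2$ forces $\mathcal{Q}$ to be semisimple. Since every $\sigma$-stable object of $\D$ admits a shift that lies in $\mathcal{Q}$, it is enough to prove that the only stable objects of $\mathcal{Q}$ are the simples $E_i[p_i]$.

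The first step is to translate the phase gap into a shift gap. The normalization $\phi_i+p_i\in(0,1]$ combined with $\phi_{i+1}-\phi_i>2$ gives $p_i-p_{i+1}\geq 2$, and therefore $p_i-p_j\geq 2(j-i)>1$ for every $i<j$. This elementary computation is the only place where the hypothesis of the proposition is used.

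The central step is the vanishing of $\Ext^1$ between distinct simples of $\mathcal{Q}$. Because $\mathcal{E}$ is strong and exceptional, $\Hom^k(E_a,E_b)=0$ unless $k=0$ and $a\leq b$, so
\[
\Ext^1(E_i[p_i],E_j[p_j])=\Hom^{1+p_j-p_i}(E_i,E_j)
\]
vanishes in every case $i\neq j$: for $i<j$ because $1+p_j-p_i<0$ and the collection is strong, and for $i>j$ by exceptionality. Since $\mathcal{Q}$ is of finite length, an induction on length then shows that every object of $\mathcal{Q}$ decomposes as a direct sum of its simples, i.e., $\mathcal{Q}$ is semisimple.

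To conclude, any nontrivial direct sum $\bigoplus_i E_i[p_i]^{\oplus n_i}$ fails to be $\sigma$-stable: if two summands carry distinct phases, the summand with larger phase already destabilises the sum; if all summands share a common phase, any proper direct summand is a subobject of equal phase and thus violates the strict inequality required by stability. Hence the only stable objects in $\mathcal{Q}$ are the simples $E_i[p_i]$, and shifts of these are the only stable objects of $\sigma$. I expect the only real bookkeeping to be verifying the arithmetic $p_i-p_{i+1}\geq 2$ and confirming that the $\Hom$ computation covers all pairs $(i,j)$; neither should pose a serious obstacle.
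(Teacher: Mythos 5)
Your proof is correct and follows essentially the same route as the paper: pass to the heart $\mathcal{Q}=\langle E_0[p_0],\dots,E_n[p_n]\rangle$, convert the phase gap $\phi_{i+1}-\phi_i>2$ into the integer gap $p_i-p_{i+1}\geq 2$, deduce $\Ext^1(E_i[p_i],E_j[p_j])=0$ from strongness (for $i<j$) and exceptionality (for $i\geq j$), and conclude that every object of the heart splits as a direct sum of the simples, so only the $E_i$ and their shifts are stable. You simply spell out a few steps the paper leaves implicit (the $i>j$ case and why nontrivial direct sums are unstable), which is fine.
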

\begin{proof}
    Up to the $\Tilde{\GL}^+_2(\R)-$action, $\sigma=(Z,\slicing)$ is given on the heart $\slicing((0,1])=\langle  E_0[p_0],\dots,E_{n}[p_n] \rangle$ with $p_0>p_1>\cdots>p_n$.

    To prove the claim it is enough to show that each object in the heart is the direct sum of the generators. 
    So it is enough to show that $\Ext^1(E_i[p_i],E_j[p_j])=0$ for $i<j$.
    By assumption $\phi_{i+1}-\phi_i\geq 2$ so $p_{i+1}\leq p_i-2$, see the construction of $\sigma$ in the previous theorem.
    We notice that  $1-p_i+p_j\leq 1-p_{j-1}+p_j\leq -1$ so $\Ext^1(E_i[p_i],E_j[p_j])=\Ext^{1-p_i+p_j}(E_i,E_j)$ is zero since the exceptional collection $\mathcal{E}$ is strong.
\end{proof}
\begin{rmk}
    When $\D=\mathrm{D^b}(\pr^2)$ we have a stronger version of the lemma above: the only assumption we need in this case is $\phi_{i+1}-\phi_i>1$.
    See \cite[Lemma 2.4]{2017_li_stab_P2} for a proof of this particular case.
\end{rmk}

\subsection{Geometric stability conditions}
There are two standard ways to construct stability conditions: one is via slope and tilt (and iterations), the other is via exceptional collection as in the previous paragraph (algebraic stability conditions).
The first type of stability is usually called geometric stability condition but we will use this terminology with a different meaning, we will call such stability conditions slope-kind stability conditions.
In higher dimensions slope-kind stability conditions were constructed in dimension three on abelian and  Fano varieties, quintic threefolds and the projective space, see \cite{2019_li_stab_quintic_3fold,Bernardara_2017,2016stab_cond_threefolds,Macr_2014_stabP3}.
\begin{df}[Geometric stability condition]\label{def_gem_stabcond}
    Let $\D$ be the bounded derived category of coherent sheaves on an algebraic variety. 
    A stability condition $\sigma\in \Stab(\D)$ is called \emph{geometric} if all skyscraper sheaves of closed points are stable of the same phase.
\end{df}

The slope-kind stability conditions are geometric, see \cite{2019_li_stab_quintic_3fold,Bernardara_2017,2016stab_cond_threefolds,Macr_2014_stabP3}. The goal of this paragraph is to prove that some stability conditions constructed as in Theorem~\ref{thm_stabcond_fromexc} are geometric, see Propositions~\ref{prop_beilinsonexc_geom_stab} and~\ref{prop_geom_alg_stabcond_P2}.

\begin{prop}\label{prop_stab_fasi_vicine_generale}
    Let $\mathcal{Q}:=\langle E_0,\dots,E_n\rangle$ be an abelian category generated by simple non zero objects, $Z:K(\mathcal{Q})\to \C$ a stability function 
    {\cite[Definition 2.1]{2007_stab_cond_on_triang_cat}{}}.
    Let us consider $Q\in\mathcal{Q}$ with
    \begin{equation*} 
    \xymatrix{
    0=A_0 \ar@{^{(}->}[r]& A_1 \ar@{^{(}->}[r]\ar@{->>}^{\psi_1}[d]&A_2 \ar@{^{(}->}[r]\ar@{->>}^{\psi_2}[d] &\cdots \ar@{^{(}->}[r]& A_{m-1}\ar@{^{(}->}[r]&A_m=Q\ar@{->>}^{\psi_m}[d]\\
               &E_{i_1}          &E_{i_2} &         &  & E_{i_m} 
    }
    \end{equation*}
    where any extension is not trivial and $\phi(E_{i_j})<\phi(E_{i_{j+1}})$. Then $Q$ is stable and $A_{i}$ for $1=1,\dots,m-1$ are the only subobjects of $Q$.
\end{prop}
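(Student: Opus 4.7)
My plan is induction on $m$. The base case $m=1$ is immediate: then $Q = E_{i_1}$ is simple, so it is automatically stable and has no proper nonzero subobjects.

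For the inductive step, assume the statement for filtrations of length less than $m$. Applied to $A_{m-1}$, which inherits the hypotheses on its length-$(m-1)$ filtration, the inductive hypothesis gives that $A_{m-1}$ is stable and its subobjects are exactly $A_0,\dots,A_{m-1}$. Now let $B \subseteq Q$ be any subobject. If $B \subseteq A_{m-1}$, then $B = A_j$ for some $j$ by induction. Otherwise the image of $B$ in the simple quotient $Q/A_{m-1} = E_{i_m}$ is all of $E_{i_m}$, so $B + A_{m-1} = Q$; and $B \cap A_{m-1}$ is a subobject of $A_{m-1}$, hence of the form $A_j$ by induction. If $j = m-1$ then $B \supseteq A_{m-1}$ and $B = Q$, and we are done. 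If $j < m-1$, the second isomorphism theorem combined with $B+A_{m-1}=Q$ and $B \cap A_{m-1} = A_j$ yields a direct sum decomposition $Q/A_j \cong E_{i_m} \oplus (A_{m-1}/A_j)$; equivalently, the short exact sequence $0 \to A_{m-1}/A_j \to Q/A_j \to E_{i_m} \to 0$ is split. For $j=0$ this reads $Q \cong A_{m-1} \oplus E_{i_m}$, immediately contradicting the nontriviality of the top extension $A_{m-1} \hookrightarrow Q$.

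The hard case is $0 < j < m-1$. My plan is to exploit the splitting $Q/A_j \cong E_{i_m} \oplus A_{m-1}/A_j$ together with the nontriviality of the filtration extensions. Pulling the $E_{i_m}$-summand of $Q/A_j$ back through $Q \twoheadrightarrow Q/A_j$ produces a subobject $X \subseteq Q$ of length $j+1$ with $A_j \subset X$ and $X/A_j \cong E_{i_m}$; the extension $0 \to A_j \to X \to E_{i_m} \to 0$ must itself be nontrivial, since if $X \cong A_j \oplus E_{i_m}$ then $E_{i_m}$ would embed into $Q$ with $E_{i_m} \cap A_{m-1} = 0$, splitting the top extension of $Q$. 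Thus $X$ fits the inductive hypothesis, but its filtration $0 \subset A_1 \subset \dots \subset A_j \subset X$ would exhibit $E_{i_m}$ in the role of $E_{i_{j+1}}$ while the phases $\phi(E_{i_1})<\dots<\phi(E_{i_j})<\phi(E_{i_m})$ remain strictly ordered. Combining with the fact that $A_{j+1}$ is also a subobject of $Q$ containing $A_j$ of the same length, and that the unique splitting produced here must be compatible with each step of the filtration, one propagates the splitting downward along the filtration of $Q$, eventually producing a splitting of one of the extensions $A_{k-1} \hookrightarrow A_k$ assumed nontrivial. This cascading compatibility is the main technical obstacle and the step where the hypothesis that \emph{every} extension in the filtration is nontrivial is used in full force.

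Finally, stability of $Q$ follows formally from the classification of subobjects. The phases $\phi(A_j)$ are strictly increasing: $\phi(A_j)$ is a weighted average of $\phi(E_{i_1}),\dots,\phi(E_{i_j})$, so strictly less than $\phi(E_{i_{j+1}})$, and then $Z(A_{j+1}) = Z(A_j) + Z(E_{i_{j+1}})$ has phase strictly between $\phi(A_j)$ and $\phi(E_{i_{j+1}})$. Iterating gives $\phi(A_j) < \phi(Q)$ for every $j < m$, so every proper nonzero subobject of $Q$ has strictly smaller phase than $Q$, which is the definition of stability.
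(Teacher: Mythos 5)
Your inductive setup and the easy cases are fine: the reduction to $B\cap A_{m-1}=A_j$ with $B+A_{m-1}=Q$, the cases $B\subseteq A_{m-1}$, $j=m-1$, $j=0$, and the final paragraph deducing stability of $Q$ from the classification of subobjects (which is essentially the second half of the paper's own argument) are all correct. The problem is the case $0<j<m-1$, which you yourself label the main technical obstacle and then only sketch: ``propagating the splitting downward along the filtration'' until some extension $A_{k-1}\hookrightarrow A_k$ splits is not a routine compatibility check, and in fact it cannot be carried out from the stated hypotheses. The splitting $Q/A_j\cong E_{i_m}\oplus (A_{m-1}/A_j)$ lives in a quotient of $Q$, while the assumed nonsplit extensions $0\to A_{k-1}\to A_k\to E_{i_k}\to 0$ live among subobjects of $Q$; nonsplitness is not preserved when you push such an extension out along $A_{k-1}\twoheadrightarrow A_{k-1}/A_j$, so no cascade reaches a contradiction.

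Concretely, the configuration you need to exclude can occur under the hypotheses exactly as written. Let $\mathcal{Q}$ be finite-dimensional representations of the quiver $1\to 2\leftarrow 3$ (the extension closure of the simples $S_1,S_2,S_3$, which form an Ext-exceptional-type collection), and let $Q$ be the indecomposable of dimension vector $(1,1,1)$, filtered by $A_1=(0,1,0)\cong S_2$ and $A_2=(1,1,0)$, so $E_{i_1}=S_2$, $E_{i_2}=S_1$, $E_{i_3}=S_3$. Both consecutive extensions are nonsplit ($A_2$ and $Q$ are indecomposable), and one may choose the stability function with $\phi(S_2)<\phi(S_1)<\phi(S_3)$. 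Nevertheless $B=(0,1,1)$ is a subobject with $B\cap A_2=A_1$, i.e.\ precisely your hard case $j=1$, and no contradiction is available: $Q/A_1\cong S_1\oplus S_3$ really does split, while $A_1\subset A_2\subset Q$ remain nonsplit. So any correct treatment of this case must use strictly more than nonsplitness of the consecutive extensions --- for instance nonsplitness of all induced extensions $0\to A_k/A_j\to A_{k+1}/A_j\to E_{i_{k+1}}\to 0$, which is what the paper's own proof needs when it passes from $Q'\subseteq Q$ to $Q'/A_1\subseteq Q/A_1$ and ``argues similarly'', and which is available for the Koszul-type filtration of $\o_{\pt}$ in the intended application via the specific $\Hom$/$\Ext$ vanishing among the shifted exceptional objects. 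As written, your hard case is a genuine gap, and the cascading-splitting strategy would fail on the example above.
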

\begin{proof}
Let us start by proving the second claim.
    Let us consider a non zero subobject $Q'\subset Q$.
    The image $\psi_m(Q')$ can be $0$ or $E_{i_m}$, in both cases $Q'\cap A_{m-1}\neq 0$, indeed in the second case a splitting would occur and we have assumed that the extensions are non trivial.
    Arguing in the same way for $\psi_{m-1},\dots,\psi_1$ we get that $E_{i_1}\subseteq Q'$.
    If $A_1=Q'$ we are fine otherwise we consider the non zero subobject $Q'/A_1\subseteq Q/A_1 $ and prove similarly that 
    $A_2/A_1\subseteq Q'/A_1$ so $A_2\subseteq Q'$. By iteration we get the second claim.
    
    To prove that $Q$ is stable we will prove by induction that  $\phi(A_j)<\phi(A_{j+1})<\phi(E_{i_{j+1}})$ for $j=1,\dots,m-1$.
    For $j=1$ it follows by construction that $\phi(A_1)<\phi(A_2)<\phi(E_{i_2})$.
    By induction hypothesis we assume the claim for $j=1,\dots,m-2$.
    In particular we know that $\phi(A_{m-1})<\phi(E_{i_{m-1}})<\phi(E_{i_m})$ and so $\phi(A_{m-1})<\phi(A_m)<\phi(E_{i_m})$.
\end{proof}
We now apply the previous result to a specific example.
\begin{prop}\label{prop_beilinsonexc_geom_stab}
    Let us consider the Beilinson exceptional collection $\mathcal{E}:=\{\Omega^{j}(j)\}$  of $\pr:=\pr^{N-1}$ and a stability condition $\sigma\in \Theta_{\mathcal{E}}$ as defined in Theorem~\ref{thm_stabcond_fromexc} with $\phi_{i+1}-\phi_{i}<1$ for $i=0,\dots,N-1$.
    Then every skyscraper sheaves of points $\o_{\pt}$ is stable i.e.,  $\sigma$ is geometric.
\end{prop}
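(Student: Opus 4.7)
The plan is to apply Proposition~\ref{prop_stab_fasi_vicine_generale} to a suitable shift of $\mathcal{O}_{\pt}$ inside the heart $\mathcal{Q} = \langle E_0[p_0], \ldots, E_{N-1}[p_{N-1}]\rangle$ produced by Theorem~\ref{thm_stabcond_fromexc}, where $E_i := \Omega^{N-1-i}(N-1-i)$. The key input is the Beilinson resolution
\begin{equation*}
    0 \to \Omega^{N-1}(N-1) \to \Omega^{N-2}(N-2) \to \cdots \to \Omega(1) \to \mathcal{O} \to \mathcal{O}_{\pt} \to 0,
\end{equation*}
arising from the second resolution of the diagonal on $\pr \times \pr$ restricted to $\{\pt\} \times \pr$. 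Setting $C := [E_0 \to \cdots \to E_{N-1}]$ with $E_{N-1}$ in cohomological degree $0$, I have $C \cong \mathcal{O}_{\pt}$ in the derived category; all differentials of $C$ are non-zero, for otherwise $C$ would split as a direct sum $C_1 \oplus C_2$ in which $C_2$ is a strictly shorter locally free resolution of $\mathcal{O}_{\pt}$, contradicting $\mathrm{pd}(\mathcal{O}_{\pt}) = N - 1$.

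I would first observe that the combination of $\phi_{i+1} - \phi_i < 1$ with the constraint $\ceil{\phi_i} < \phi_{i+1}$ from Theorem~\ref{thm_stabcond_fromexc} forces $p_{i+1} = p_i - 1$, so that $p_{N-k} = p_{N-1} + (k-1)$. Consequently, the shifted brutal truncations $A_k := \sigma^{\geq -(k-1)}(C)[p_{N-1}]$ assemble into a filtration
\begin{equation*}
    0 = A_0 \hookrightarrow A_1 \hookrightarrow \cdots \hookrightarrow A_N = \mathcal{O}_{\pt}[p_{N-1}]
\end{equation*}
whose successive quotients $A_k / A_{k-1}$ are precisely the simples $E_{N-k}[p_{N-k}]$ of $\mathcal{Q}$, in the order $E_{N-1}[p_{N-1}], \ldots, E_0[p_0]$; since $\mathcal{Q}$ is closed under extensions, every $A_k$ belongs to $\mathcal{Q}$. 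The phase increments
\begin{equation*}
    (\phi_i + p_i) - (\phi_{i+1} + p_{i+1}) = 1 - (\phi_{i+1} - \phi_i) > 0
\end{equation*}
show that the phases of consecutive simples strictly increase along the filtration, as required by Proposition~\ref{prop_stab_fasi_vicine_generale}.

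The remaining and most delicate condition to verify is non-triviality of each extension, which I would address by contradiction. A splitting at step $k$ would translate, in $\D^b(\pr)$, into an isomorphism $\sigma^{\geq -(k-1)}(C) \cong \sigma^{\geq -(k-2)}(C) \oplus E_{N-k}[k-1]$; comparing cohomology sheaves in degree $-(k-2)$ yields the contradiction, since exactness of the Beilinson resolution forces the left-hand side to vanish there, whereas the right-hand side equals $\mathrm{im}(d_{-(k-1)}) \neq 0$. The endpoint cases $k = 2$ (comparison in degree $0$, with $H^0$ equal to $\mathcal{O}_{\pt}$ on the left and to $\mathcal{O}$ on the right) and $k = N$ (using that $C$ is acyclic outside degree $0$) are handled analogously. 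This cohomological bookkeeping is the main obstacle in the argument.

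Applying Proposition~\ref{prop_stab_fasi_vicine_generale} then yields stability of $\mathcal{O}_{\pt}[p_{N-1}]$ in $\mathcal{Q}$, hence $\sigma$-stability of $\mathcal{O}_{\pt}$. Since $Z(\mathcal{O}_{\pt})$ depends only on the Chern character and is therefore independent of $\pt$, all skyscraper sheaves share the same phase, so $\sigma$ is geometric in the sense of Definition~\ref{def_gem_stabcond}.
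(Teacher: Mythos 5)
Your proof is correct and follows essentially the same route as the paper: filter $\o_{\pt}$ (suitably shifted into the heart $\mathcal{Q}$ of Theorem~\ref{thm_stabcond_fromexc}) by the brutal truncations of the Beilinson resolution, check that the phases of the successive simple factors increase because $\phi_{i+1}-\phi_i<1$ forces $p_{i+1}=p_i-1$, and apply Proposition~\ref{prop_stab_fasi_vicine_generale}. The only difference is that you spell out details the paper leaves implicit, notably the verification via cohomology sheaves that each extension in the filtration is non-split, which is indeed a hypothesis of Proposition~\ref{prop_stab_fasi_vicine_generale}.
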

\begin{proof}
    Let us recall that $\sigma$ is constructed (up to $\Tilde{\GL}^+_2(\R)$ action) on the abelian category
    \begin{equation*}
        \mathcal{Q}:=\langle \Omega^{N-1}(N-1)[p_0+N-1],, \dots,\Omega(1)[p_0+1], \o[p_0]\rangle
    \end{equation*}
    where the object $\Omega^i(i)[p_0-i]$ has phase $\phi_{N-1-i}+p_0+i$.
    We have a resolution
    \begin{equation*}
        0\to \Omega^{N-1}(N-1)\to\cdots\to \o\to \o_{\pt}\to 0
    \end{equation*}
    and its stupid filtration gives us the following exact triangles
    \begin{equation*}
    \xymatrix{
    0 \ar[r]& \bullet \ar[r]\ar[d]&\bullet \ar[r]\ar[d] &\cdots \ar[r]& \bullet\ar[r]&\o_{\pt}[p_0]\ar[d]\\
               &\o[p_0] \ar@{-->}[ul]         &\Omega^1(1)[p_0+1]\ar@{-->}[ul] &         &  & \Omega_{N-1}(N-1)[p_0+N-1]\ar@{-->}[ul] 
    }
\end{equation*}
We observe that $\phi(\Omega^{i+1}(i+1)[p_0+i+1])-\phi(\Omega^i(i)[p_0+i])>0$ so we can apply Proposition~\ref{prop_stab_fasi_vicine_generale} and get the claim.
\end{proof}

For the projective plane we have a stronger result.
\begin{prop}[Geometric algebraic stability conditions for the projective plane {\cite[Proposition 2.5]{2017_li_stab_P2}}]\label{prop_geom_alg_stabcond_P2}
    Consider the derived category of the projective plane $\D=\mathrm{D^b}(\pr^2)$, any strong full exceptional collection of coherent sheaves $\mathcal{E}$ and   $\sigma\in \Theta_{\mathcal{E}}$ with $\phi_{i+1}-\phi_i<1$ then all skyscraper sheaves of points are stable of the same phase i.e.,  $\sigma$ is geometric.
\end{prop}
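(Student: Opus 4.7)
The plan is to adapt the argument of Proposition~\ref{prop_beilinsonexc_geom_stab} to an arbitrary full strong exceptional collection $\mathcal{E}=\{E_0,E_1,E_2\}$ of coherent sheaves on $\pr^2$. Fix $\sigma\in\Theta_{\mathcal{E}}$ satisfying $\phi_{i+1}-\phi_i<1$ and let $\mathcal{Q}=\langle E_0[p_0],E_1[p_1],E_2[p_2]\rangle$ be the associated heart from Theorem~\ref{thm_stabcond_fromexc}. I would produce a Beilinson-type resolution of the skyscraper $\o_{\pt}$ in terms of the $E_i$'s, translate it (via its stupid filtration, further refined by splitting direct sums) into a filtration of $\o_{\pt}[p_0]$ in $\mathcal{Q}$ whose simple successive quotients are among the shifted generators $E_i[p_i]$, verify monotonicity of the phases along this filtration using $\phi_{i+1}-\phi_i<1$, and finally apply Proposition~\ref{prop_stab_fasi_vicine_generale} to conclude that $\o_{\pt}$ is $\sigma$-stable. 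Since $\ch(\o_{\pt})$, and hence $Z(\o_{\pt})$, is independent of the point $\pt\in\pr^2$, this will also imply that all skyscraper sheaves are stable of the same phase and $\sigma$ is therefore geometric.

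The $\pr^2$-specific input is the existence of the resolution. By Example~\ref{example_exc_coll_P2}, the Drezet--Gorodentsev classification guarantees that every $E_i$ is an exceptional vector bundle and that $\mathcal{E}$ is obtained from $\{\o,\o(1),\o(2)\}$ by iterated mutations. Local freeness of the $E_i$'s implies $\Ext^{>0}(E_i,\o_{\pt})=0$, so the generalized Beilinson spectral sequence (i.e.\ the Postnikov tower coming from the SOD $\D^b(\pr^2)=\langle\langle E_0\rangle,\langle E_1\rangle,\langle E_2\rangle\rangle$) applied to $\o_{\pt}$ degenerates and produces a short resolution
\begin{equation*}
0\to E_{i_0}^{\oplus a_0}\to E_{i_1}^{\oplus a_1}\to E_{i_2}^{\oplus a_2}\to \o_{\pt}\to 0
\end{equation*}
with non-negative multiplicities $a_j$ determined by the ranks of the dual collection of $\mathcal{E}$. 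Writing out the stupid filtration and splitting each direct sum yields a composition series of $\o_{\pt}[p_0]$ in $\mathcal{Q}$ with simples among the $E_i[p_i]$. Strongness of $\mathcal{E}$ and the relevant $\Ext^1$-vanishings prevent any of these extensions from splitting, and the hypothesis $\phi_{i+1}-\phi_i<1$, combined with the shift gap $p_0>p_1>p_2$ coming from Theorem~\ref{thm_stabcond_fromexc}, makes the phases along the filtration strictly monotonic in the required direction; Proposition~\ref{prop_stab_fasi_vicine_generale} then gives the stability.

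The main obstacle is verifying that the resolution actually takes this clean form for every such $\mathcal{E}$, with non-negative multiplicities and no relative shifts between layers. This requires controlling $\Hom^k(E_i,\o_{\pt})$ in each degree, which ultimately rests on the local freeness of the $E_i$ and on the Drezet--Le Potier classification of exceptional bundles on $\pr^2$. A safer backup is to induct on the mutations provided by Example~\ref{example_exc_coll_P2}: the resolution is explicit (Koszul-type) for the Beilinson collection $\{\Omega^2(2),\Omega(1),\o\}$, and each left or right mutation acts on the resolution of $\o_{\pt}$ in a transparent way that preserves the form, at worst reordering the layers. The final monotonicity check in step three is then a short computation entirely parallel to the one in the proof of Proposition~\ref{prop_beilinsonexc_geom_stab}.
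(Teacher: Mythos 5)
The key step of your plan breaks down at the point where you invoke Proposition~\ref{prop_stab_fasi_vicine_generale}. That proposition requires a filtration whose successive quotients are \emph{single} simple generators $E_{i_j}[p_{i_j}]$, with \emph{strictly} increasing phases and with every intermediate extension nontrivial. The reason this works for the Beilinson collection $\{\Omega^j(j)\}$ in Proposition~\ref{prop_beilinsonexc_geom_stab} is the special fact that the Koszul resolution $0\to\Omega^{N-1}(N-1)\to\cdots\to\Omega(1)\to\o\to\o_{\pt}\to0$ has every term appearing with multiplicity one. For a general strong exceptional collection on $\pr^2$ this fails already in the most basic example: for $\{\o,\o(1),\o(2)\}$ the resolution is $0\to\o\to\o(1)^{\oplus2}\to\o(2)\to\o_{\pt}\to0$, and for collections further along the Drezet--Le Potier classification the ranks and multiplicities grow without bound. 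Once a layer $E_i^{\oplus a}$ with $a\geq2$ appears, refining it to a composition series produces consecutive factors of \emph{equal} phase whose intermediate extension is split (it is a subquotient of the direct sum $E_i^{\oplus a}$), so both hypotheses of Proposition~\ref{prop_stab_fasi_vicine_generale} fail, and the conclusion does not follow formally: one must rule out destabilizing subobjects such as a single copy of $E_i[p_i]$ mapping into $\o_{\pt}$ (e.g.\ the nonzero map $\o(1)\to\o_{\pt}$), which is exactly where the hypothesis $\phi_{i+1}-\phi_i<1$ has to be used in a genuinely $\pr^2$-specific way. Your fallback -- inducting on mutations and asserting that each mutation changes the resolution only by ``reordering the layers'' -- is not correct either: mutations change the multiplicities and the ranks of the terms, so the multiplicity-one structure that your monotonicity argument needs is not preserved.

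For comparison, the paper does not reprove this statement at all: it is quoted from Li's work on $\Stab(\mathrm{D^b}(\pr^2))$ (Proposition 2.5 there), whose proof relies on the detailed description of the geometric region via the classification of exceptional bundles on $\pr^2$ and careful control of potentially destabilizing classes, rather than on a filtration argument of the type above. This is also why the paper only proves the weaker, collection-specific statement (Proposition~\ref{prop_beilinsonexc_geom_stab}) by the elementary method you are adapting, and cites Li for the general $\pr^2$ case; note the analogous phenomenon in the remark following Proposition~\ref{prop_pure_alg_stabcond}, where the improvement from a gap of $2$ to a gap of $1$ is again special to $\pr^2$ and quoted from Li. So your first paragraph (the reduction to a resolution plus a phase-monotonicity check) cannot be completed as stated; to make the statement rigorous you would either need a substantially stronger version of Proposition~\ref{prop_stab_fasi_vicine_generale} handling repeated factors, or you should follow Li's original argument.
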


\section{Quantum cohomology, Gamma Conjecture II, and Quantum cohomology central charge}

In the Section~\ref{subsection_quan_coh} we recall some general definitions of  quantum cohomology and quantum connection for a fixed Fano variety $F$, see \cite{dubrovin1994geometry} for general definitions.
In the Section~\ref{subsection_gammaconj_quantZ} we state the   \cite[Gamma Conjecture II]{Galkin_2016} and define the quantum cohomology central charge as in \cite{Iritani_2009,Galkin_2016};
for the details see
\cite{dubrovin1994geometry,dubrovin1998painleve,Galkin_2016,cotti2018helix}.
In the Section~\ref{subsection_quant_of_proj_spaces} we review the example of small quantum cohomology of projective spaces.

\subsection{Quantum cohomology, quantum connection and flat sections}\label{subsection_quan_coh}
In this section we will follow \cite{manin_frob_man, Galkin_2016}.
We consider a smooth projective variety  $F$ with ample anticanonical bundle, its cohomology with complex coefficients $\H^{\bullet}(F,\C)$ and a homogeneous basis 
$\{\phi_i\}$. We will use the notations $\H=\H^{\bullet}(F)=\H^{\bullet}(F,\C)$.
We will write an element $\tau\in \H$ as $\tau=\sum \tau^{i}\phi_i$ where $\tau^{i_2}, \dots, \tau^{i_2+b_2-1}$ are the coordinates of $\H^2(F,\C)$ and $b_i, i=0,\dots, 2\dim (F)$ are the Betti numbers of $F$.

The \emph{genus zero Gromov--Witten potential} is a formal power series 
\begin{equation*}
\mathcal{F}^F_0(\tau)\in \C[[\tau^0, \dots,\tau^{i_1+b_1-1}, e^{\tau^{i_2}}, \dots, e^{\tau^{i_2+b_2-1}}, \tau^{i_3}\dots, \tau^{i_{2\dim(F)}+b_{2\dim(F)}-1}]]
\end{equation*}
defined by the Gromov--Witten invariants.
Since $F$ is Fano we have that for $\tau\in H^2(F,\C)$ the power series $\mathcal{F}^F_0(\tau)$ is a finite sum.
We will denote by $B\subseteq \H$ the open domain of convergence of $\mathcal{F}^F_0$. We will assume that $B$ is nonempty.
We define the quantum product at $\tau\in B$ as follows: for $\alpha, \beta,\gamma\in \H$
\begin{equation*}
    (\phi_i\star_{\tau}\phi_j ,\phi_k)_F=\partial_i\partial_j\partial_k \mathcal{F}^F_0(\tau)
\end{equation*}
where $(\cdot,\cdot)_F$ is the Poincaré pairing on $F$.
The product $\star_{\tau}$ defines a Frobenius manifold structure on $B$, see \cite[Definition 2.1]{dubrovin1998painleve}.
When we consider $\tau \in \H^2(F,\C)$ we call $\star_{\tau}$ the \emph{small quantum product}.

We consider the trivial vector bundle $\H\times B\times \pr^1\to B\times\pr^1$.
Fixing a coordinate $[z;1]\in \pr^1$ and denoting by $\alpha\in \H\cong \mathrm{T}_{B,\tau}$ a tangent vector, we define the quantum connection $\Tilde{\nabla}$ as follows
\begin{equation*}\label{eq_big_quant_conn}
    \begin{split}
    \Tilde{\nabla}: \H\oplus \mathrm{T}_{\pr^1,z}& \to (\H\oplus \mathrm{T}_{\pr^1,z})\otimes\Omega_{B\times\pr^1}\\
        \Tilde{\nabla}_{\alpha}&=\partial_{\alpha}+\frac{1}{z}(\alpha\star_{\tau})\\
        \Tilde{\nabla}_{z\partial_z}&=z\partial_z-\frac{1}{z}(\mathcal{E}\star_{\tau})+\mu
    \end{split}
\end{equation*}
where $\mu$ is the grading operator defined by 
\begin{equation*}
    \mu|_{\H^{2p}(F)}=(p-\frac{\mathrm{dim}(F)}{2})\mathrm{id}_{\H^{2p}(F)},
\end{equation*}
\begin{equation*}
    \mathcal{E}:=\mathrm{c}_1(F)+\sum(1-\frac{1}{2}\deg(\phi_i))\tau^i\phi_i 
\end{equation*}
is the 
\emph{Euler vector field}.
The connection $\Tilde{\nabla}$ is  meromorphic and flat.

Restricting to $\tau=0$ we have the following connection 
\begin{equation*}\label{eq_quantconn_GGI2.2.1}
    \nabla_{z\partial_z}=z\partial_z-\frac{1}{z}(c_1(F)\star_0))+\mu
\end{equation*}
on $\H\times \pr^1\to \pr^1$.
There is a \emph{canonical} fundamental solution of flat sections of $\nabla$.
\begin{prop}[{\cite[Proposition 2.3.1]{Galkin_2016}{}}]\label{prop_GGI2.3.1}
    There exists a unique holomorphic function 
    \[S:\pr^1\setminus \{0\}\to \mathrm{End}(H^{\bullet}(F))\]
    with $S(\infty)=\mathrm{id}$ such that 
    \begin{equation}\label{eq_fund_sol}
        \nabla (S(z)z^{-\mu}z^{\rho}\alpha)=0 \text{ for all } \alpha\in \H^{\bullet}(F)
    \end{equation}
    where $\rho=(c_1(F)\cup)\in \mathrm{End}(\H^{\bullet}(F))$ and we define $z^{-\mu}=\exp{(-\mu \log(z))}, z^{\rho}=\exp{(\rho \log(z))}$.
\end{prop}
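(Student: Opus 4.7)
The plan is to substitute the ansatz $T(z) := S(z)z^{-\mu}z^{\rho}$ into the flatness equation $\nabla_{z\partial_z}T = 0$ and reduce it to a recursion for the Taylor coefficients of $S$ at $w := 1/z = 0$, then verify convergence. Computing $z\partial_z T$ using $z\partial_z z^{-\mu} = -\mu z^{-\mu}$ and $z\partial_z z^{\rho} = \rho z^{\rho}$, together with the commutation identity $z^{-\mu} A z^{\mu} = z^{-1}A$ for any operator $A$ of cohomological degree $+2$ (applicable to $\rho$ and to the degree-$2$ part of $c_1(F)\star_0$), right-multiplication by $z^{-\rho}z^{\mu}$ should transform the flatness equation into the equivalent ODE
\begin{equation*}
    z\partial_z S + [\mu, S] = \tfrac{1}{z}\bigl((c_1(F)\star_0)S - S\rho\bigr).
\end{equation*}

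Writing $S(w) = \sum_{k\geq 0} S_k w^k$ with $S_0 = \mathrm{id}$ (forced by $S(\infty) = \mathrm{id}$) and matching coefficients of $w^k$ yields the recursion
\begin{equation*}
    (k - \mathrm{ad}(\mu))\, S_k = S_{k-1}\rho - (c_1(F)\star_0)S_{k-1}, \qquad k\geq 1.
\end{equation*}
The main obstacle is to solve this recursion uniquely. I would decompose $\End(\H^{\bullet}(F)) = \bigoplus_j\End_j$ into $\mathrm{ad}(\mu)$-eigenspaces (integer eigenvalues in $[-\dim F,\dim F]$); on $\End_j$ the operator $k - \mathrm{ad}(\mu)$ acts by the scalar $k - j$. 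A key Fano input is the dimension axiom of Gromov--Witten invariants, which places $c_1(F)\star_d \in \End_{1 - c_1(F)\cdot d}$, so for $F$ Fano the quantum correction $c_1(F)\star_0 - \rho$ lies in $\bigoplus_{j \leq 0}\End_j$. Combined with $\rho \in \End_1$ and $S_0 = \mathrm{id} \in \End_0$, a careful grading analysis would show that each $S_k$ is determined in $\bigoplus_{j \leq 0}\End_j$, where $k - \mathrm{ad}(\mu)$ acts by positive scalars $k - j \geq k \geq 1$ and is therefore invertible. This gives existence and uniqueness of $S_k$ at every step.

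Finally, convergence of $\sum S_k w^k$ near $w = 0$ follows from the standard Frobenius theory for ODEs with regular singular points: the connection has a regular singularity at $w = 0$ with residue $-\mu$, which implies the formally constructed series has positive radius of convergence. The resulting holomorphic $S$ near $w = 0$ extends to $\pr^1\setminus\{0\}$ by analytic continuation along paths in $\C^{*}$ using flatness of $\nabla$, and uniqueness is immediate from the triangular nature of the recursion together with the normalisation $S(\infty) = \mathrm{id}$.
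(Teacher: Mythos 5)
The paper never proves this statement; it is quoted directly from Galkin--Golyshev--Iritani, so your argument has to stand on its own. Your reduction to the ODE $z\partial_z S+[\mu,S]=\tfrac1z\bigl((c_1(F)\star_0)S-S\rho\bigr)$ and the recursion $(k-\mathrm{ad}(\mu))S_k=S_{k-1}\rho-(c_1(F)\star_0)S_{k-1}$ are correct, but the grading invariant you rely on is not. Even if $S_{k-1}\in\bigoplus_{j\le 0}\End_j$, the right-hand side equals $-[\rho,S_{k-1}]-\bigl(c_1(F)\star_0-\rho\bigr)S_{k-1}$, and $[\rho,\cdot]$ raises the $\mathrm{ad}(\mu)$-degree by one, so it acquires a component in $\End_1$ as soon as $S_{k-1}$ has a degree-zero part (which happens already at $k=2$ for Fano varieties of index one, where $c_1(F)\star_0-\rho$ has a degree-zero piece). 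Hence the coefficients do not stay in $\bigoplus_{j\le 0}\End_j$ and your induction does not close as stated. This part is repairable: the invariant that propagates is $S_k\in\bigoplus_{j\le k-1}\End_j$, on which $k-\mathrm{ad}(\mu)$ still acts with eigenvalues $k-j\ge 1$, giving existence of the formal solution; convergence then indeed follows from regular-singular-point theory.

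The genuine gap is uniqueness. On all of $\End(\H^{\bullet}(F))$ the operator $k-\mathrm{ad}(\mu)$ has nonzero kernel $\End_k$ for $1\le k\le \dim F$, and nothing in the hypotheses confines the coefficients of a competing solution to the subspace where your recursion is triangular, so ``existence and uniqueness of $S_k$ at every step'' does not follow. Concretely, if $S$ satisfies the displayed conditions, so does $S'(z):=S(z)(\mathrm{id}+z^{-1}\rho)$: it is holomorphic on $\pr^1\setminus\{0\}$ with $S'(\infty)=\mathrm{id}$, and since $[\mu,\rho]=\rho$ gives $(\mathrm{id}+z^{-1}\rho)z^{-\mu}z^{\rho}=z^{-\mu}z^{\rho}(\mathrm{id}+\rho)$, one has $S'(z)z^{-\mu}z^{\rho}\alpha=S(z)z^{-\mu}z^{\rho}\bigl((\mathrm{id}+\rho)\alpha\bigr)$, which is flat for every $\alpha$. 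So uniqueness cannot be extracted from flatness plus the normalisation $S(\infty)=\mathrm{id}$ via the recursion alone; the canonical $S$ is singled out by the finer structure built into the GGI construction (the Gromov--Witten descendant formula, equivalently the degree bound $S_k\in\bigoplus_{j\le k-1}\End_j$ on the expansion coefficients, or flatness for the full connection including the $\tau$-directions), and a complete proof must state such a normalisation and use it---for instance by showing that two solutions differ by right multiplication by $z^{-\mathrm{ad}(\mu)}C$ for a constant $C$ commuting with $\rho$, and that the extra normalisation forces $C=\mathrm{id}$. Your appeal to ``the triangular nature of the recursion'' is precisely what fails at the resonant degrees, so this step needs a genuinely different argument.
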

\begin{rmk}[{\cite[Remark 2.3.2]{Galkin_2016}{}}]\label{rmk_can_sol_fortau_non_0}
    There is also  a canonical fundamental solution of $\Tilde{\nabla}=0$ on $B\times(\pr^1\setminus\{0\})$ that extends the solution in~\eqref{eq_fund_sol}, more precisely there is
    a holomorphic function
    \begin{equation*}
        S(\tau,z):B\times (\pr^1\setminus\{0\})\to \End(\H^{\bullet}(F))
    \end{equation*}
    such that 
    \begin{equation*}\label{eq_fund_sol_gen}
        \Tilde{\nabla}(S(\tau, z)z^{-\mu}z^{\rho}\alpha)=0 \text{ for all } \alpha\in \H^{\bullet}(\pr)
    \end{equation*}
    satisfying $S(\tau, \infty)=\mathrm{id}$.
\end{rmk}

\begin{example}
    When $F$ is a  Grassmannian the small quantum product $\star_{\tau}$ is \emph{semisimple} for all $\tau\in \H^2(F)$ i.e.,  the ring $(\H, \star_{\tau})$ is isomorphic to some product of $\C$.
    Note also that  semisimplicity of $(\H,\star_{\tau})$ is equivalent to the existence of a basis $\{e_i\}$ of $\H$ such that $e_i\star_{\tau}e_j=\delta_{ij}e_i$.
    Semisimplicity is an open condition, see \cite{cox_katz} for details.
    See also \cite{manin_frob_man} for details about the non emptiness of the domain of convergence of the Gromov--Witten potential for the projective spaces.
\end{example}

For the rest of this section let us fix $\tau_0\in B$ such that the product $\star_{\tau_0}$ is semisimple.
In this case the matrix $\mathcal{E}\star_{\tau_0}$ is diagonalizable; let us denote by $\{u_j(\tau_0)\}$ its eigenvalues.
We say that $\phi\in \R$ is an \emph{admissible phase} for $\{u_j(\tau_0)\}$ if 
\begin{equation*}
    e^{-\ir \phi}(u_j(\tau_0)-u_i(\tau_0))\notin \R_{>0}
\end{equation*}
for any non zero difference $u_j(\tau_0)-u_i(\tau_0)$.

Let us denote by $\Psi$ the matrix whose columns $\Psi_j$ form a basis of idempotents of $\star_{\tau_0}$; we can assume that they are normalized with respect to the Poincaré pairing on $F$.
Let us observe that $\Psi_j$ are
eigenvectors of $\mathcal{E}\star_{\tau_0}$. 
We will denote by
$U=\mathrm{diag}(\dots,u_j(\tau_0),\dots)$ the diagonal matrix of eigenvalues 
associated to the columns of $\Psi$ i.e., $U$ is the linear operator $\mathcal{E}\star_{\tau_0}$ in the basis $\{\Psi_j\}$.
\begin{prop}[{\cite[Proposition 2.5.1]{Galkin_2016}}]\label{prop2.5.1_GGI}
    Choose a $\tau_0\in B\subseteq\H^{\bullet}(F,\C)$ such that $\star_{\tau_0}$ is semisimple.
    Consider the quantum connection
    $\Tilde{\nabla}$ on $B\times\pr^1$ and let $\phi\in\R$ be an admissible phase for $\{u_j(\tau)\}$.
    Then in a neighborhood of $\tau_0$ we have an analytic fundamental solution $Y_{\tau}(z)$ of $\Tilde{\nabla}$ and $\epsilon>0$ such that
    \begin{equation*}
        Y_{\tau}(z)e^{U/z}\to \Psi \text{ as } z \to 0 \text{ in the sector } |\mathrm{arg}(z)-\phi|<\frac{\pi}{2}+\epsilon
    \end{equation*}
    see Figure~\ref{fig_domain_asym_exp_fund_sol}.
    The solution $Y_{\tau}(z)$ is called the \emph{asymptotically exponential fundamental solution} and it is the unique solution satisfying this asymptotic.
\end{prop}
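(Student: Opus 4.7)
The plan is to reduce the claim to a classical existence theorem for irregular singular ODEs and then invoke the admissibility hypothesis to control the sector size. First, at a semisimple point $\tau_0$, I would change trivialization of the trivial bundle via the normalized idempotent matrix $\Psi$. Because the columns $\Psi_j$ diagonalize $\mathcal{E}\star_{\tau_0}$ with eigenvalues $u_j(\tau_0)$, in the idempotent frame the connection $\Tilde\nabla_{z\partial_z}$ takes the form
\begin{equation*}
z\partial_z \;-\; \frac{1}{z}U \;+\; R(\tau_0,z),
\end{equation*}
where $R$ is holomorphic in $z$ in a neighborhood of $0$. Thus $z=0$ is an irregular singularity of Poincaré rank one with pairwise distinct eigenvalues $u_j(\tau_0)$ in the leading coefficient.

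Next, I would construct a formal solution of the form $\hat Y(\tau_0,z) = \Psi\bigl(I + \sum_{k\geq 1}\Psi_k z^k\bigr)$ of $\Tilde\nabla\bigl(\hat Y\,e^{-U/z}\bigr)=0$; plugging in and matching coefficients gives recursive linear equations for $\Psi_k$ whose solvability follows from the nondegeneracy of the commutator with $U$, which in turn uses $u_i(\tau_0)\neq u_j(\tau_0)$ for $i\neq j$ (semisimplicity). Then by the standard existence theorem for irregular singular ODEs of Poincaré rank one (Hukuhara--Turrittin--Sibuya), on any open sector $S$ at $z=0$ with opening strictly less than $\pi$, if $S$ contains no Stokes rays, there is a unique analytic fundamental solution $Y_{\tau_0}(z)$ with $Y_{\tau_0}(z) e^{U/z} \sim \Psi(I + \sum \Psi_k z^k)$ as $z\to 0$ in $S$, in particular $Y_{\tau_0}(z)e^{U/z}\to \Psi$.

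The Stokes rays of the irregular singularity are precisely the rays $\arg z = \psi$ with $e^{-\ir \psi}(u_j(\tau_0)-u_i(\tau_0))\in \R_{>0}$, i.e., the rays of steepest decay/growth of the off-diagonal exponentials. The admissibility of $\phi$ exactly says $\phi$ is not such a ray; since the set of Stokes rays is finite, we can choose $\epsilon>0$ small enough so that the sector $|\arg z-\phi|<\frac{\pi}{2}+\epsilon$ (opening $\pi+2\epsilon$) avoids all of them, and the existence theorem applies there. Uniqueness follows from the fact that any two such solutions differ by an automorphism of the formal solution that is asymptotic to the identity on a sector of opening $>\pi$, hence is the identity (this is the Watson/Sibuya uniqueness principle).

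The main obstacle is propagating the construction in $\tau$ and obtaining \emph{analytic} (not merely formal) dependence on the parameter near $\tau_0$. Here I would use the fact that $\Tilde\nabla$ defines an isomonodromic family: the eigenvalues $u_j(\tau)$ depend holomorphically on $\tau$ near $\tau_0$ and remain distinct in a small neighborhood, so admissibility of $\phi$ persists on that neighborhood. Applying the parameter version of Sibuya's theorem (or equivalently deforming the Borel sum of the formal series in $\tau$) yields a solution $Y_\tau(z)$ jointly holomorphic in $(\tau,z)$ with the prescribed asymptotic uniformly on compact subsectors. This parameter-analyticity step is the technical heart and is where one must invoke the full strength of the irregular ODE existence theory rather than an ODE statement at a single point.
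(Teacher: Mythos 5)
The paper itself offers no proof of this statement; it is quoted from Galkin--Golyshev--Iritani, and your reduction to the classical theory of rank-one irregular singularities (idempotent frame, formal solution, sectorial existence, uniqueness from an opening larger than $\pi$, parameter-analytic version for the family in $\tau$) is exactly the standard route taken in that source. However, your existence step contains a genuine error. You quote an existence theorem for sectors of opening strictly less than $\pi$ containing no Stokes rays and then apply it to the sector $|\arg z-\phi|<\frac{\pi}{2}+\epsilon$, claiming $\epsilon$ can be chosen so that this sector avoids all Stokes rays. That is impossible whenever there are at least two distinct eigenvalues: with your (correct) definition, the Stokes rays come in antipodal pairs, since $\arg(u_j-u_i)$ and $\arg(u_i-u_j)$ differ by $\pi$, so any sector of opening $\pi+2\epsilon>\pi$ necessarily contains at least one ray from each pair. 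Admissibility of $\phi$ only says that the \emph{bisecting direction} is not a Stokes ray; it cannot clear the whole enlarged sector. Moreover, the theorem as you state it is also wrong on its own terms: on a sector of opening less than $\pi$ a solution asymptotic to the formal one is never unique in general (this is precisely the Stokes phenomenon -- one may modify it by exponentially small terms), so it could not serve as the uniqueness input either.

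The correct mechanism, and the actual content of the Balser--Jurkat--Lutz/Sibuya input used by Galkin--Golyshev--Iritani and Dubrovin, is that admissibility of the single direction $\phi$ guarantees $1$-summability (Borel--Laplace summation) of the formal series in the direction $\phi$: the resulting analytic solution is asymptotic to the formal solution on the sector bisected by $\phi$ of opening $\pi+2\epsilon$ (with $\epsilon$ smaller than the distance from $\phi$ to the nearest singular direction), \emph{even though} this sector contains Stokes rays in its interior. With that statement in place, your uniqueness argument (a connection matrix $C$ with $e^{-U/z}Ce^{U/z}\to \mathrm{id}$ on a sector of opening $>\pi$ must be the identity) and your parameter-dependence discussion go through, and the neighborhood in $\tau$ is handled as you say because the $u_j(\tau)$ stay distinct and $\phi$ stays admissible nearby. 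A smaller point worth flagging: for the asymptotics to have the clean form $Y_\tau(z)e^{U/z}\to\Psi$ with no extra factor $z^{D}$, you need the diagonal part of $\Psi^{-1}\mu\Psi$ to vanish, which holds here because $\mu$ is skew-adjoint for the Poincar\'e pairing and the idempotent frame is orthonormal; your recursion argument only fixes the off-diagonal parts via $\mathrm{ad}_U$ and should mention this.
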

\begin{figure}[ht]
\includegraphics[width=8cm]{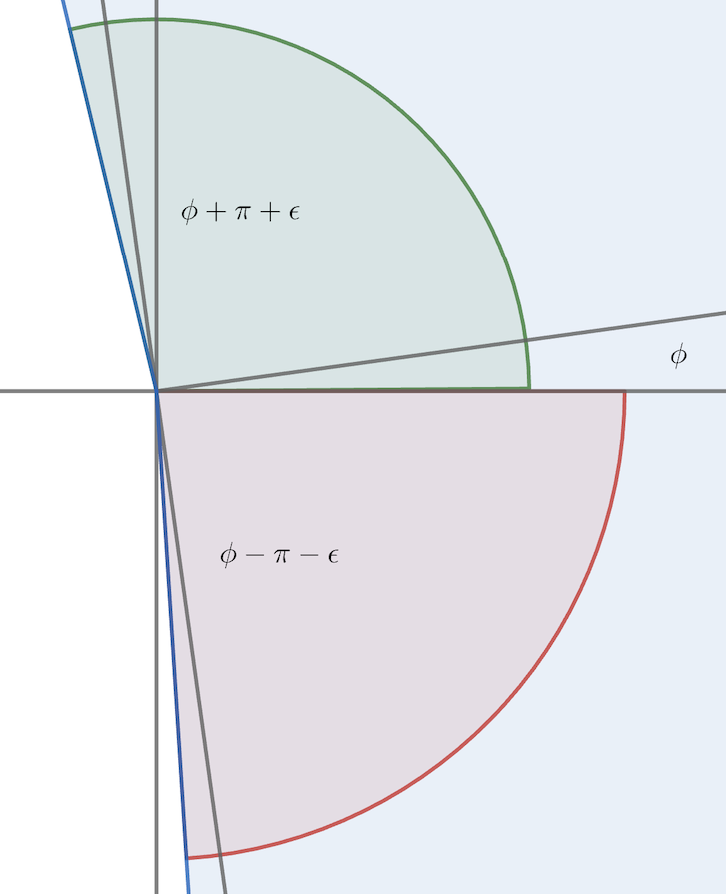}
\caption{Domain where we estimate the asymptotic behaviour of the of the asymptotically exponential fundamental solution. }
\label{fig_domain_asym_exp_fund_sol}
\end{figure}
\subsection{Gamma Conjecture II and quantum cohomology central charges}\label{subsection_gammaconj_quantZ}
In 1998  Dubrovin conjectures that for a Fano manifold the semisiplicity of the big quantum cohomology is equivalent to the existence of a full exceptional collection in the derived category, see \cite{dubrovin1998geometryICM}. 
The Gamma Conjecture II is a refinement of Dubrovin's conjecture.
In this section we will state the latter and review its proof for the projective spaces.
Let us observe that if $\star_{\tau_0}$ is semisimple then Proposition~\ref{prop2.5.1_GGI} gives us a basis of flat sections $y_j(z,\tau)$ i.e., $y_j(z,\tau)$ are the columns of $Y_{\tau}(z)$ for $\tau$ near $\tau_0$.
By $\Tilde{\nabla}-$parallel transport we get a basis of solutions $y_j(z)$ of $\nabla=\Tilde{\nabla}_{|_{\tau=0}}$ on $\{0\}\times \R_{>0}\subseteq B\times \pr^1$.

On the other hand Proposition~\ref{prop_GGI2.3.1} gives a canonical isomorphism
\begin{equation}\label{eq_iso_Phi}
\begin{split}
    \Phi : \H^{\bullet}(F)&\to \left\{ s:\R_{> 0}\to \H^{\bullet}(F): \nabla s=0   \right\}\\
    \alpha &\mapsto (2\pi )^{-\frac{\mathrm{dim}(F)}{2}}S(z)z^{-\mu}z^{\rho}\alpha
\end{split}
\end{equation}
where we use the standard determination $\ln(z)\in \R$, for $z\in \R_{>0}$, in the expression
\[
z^{-\mu}z^{\rho}=\exp{(-\mu\ln{z})}\exp{(\rho \ln{z})}.
\]
We will use the following notation for the modified Chern character 
\begin{equation*}
    \Ch:=(2\pi\ir)^{\deg}\ch.
\end{equation*}
We recall that the Gamma class $\hat{\Gamma}$ of $F$ is
\begin{equation*}
    \hat{\Gamma}:=\prod_{i=0}^{\dim (F)}\Gamma(1+\delta_i) \in \H^{\bullet}(F,\R)
\end{equation*}
where $\delta_i$ are the Chern roots of the tangent bundle of $F$ and $\Gamma$ is the Euler's Gamma function.
\begin{conj}[{\cite[Gamma Conjecture II]{Galkin_2016}{}}]\label{conj_gammaII}
    Let $F$ be a Fano variety with $\tau_0\in B\subseteq \H^{\bullet}(F,\C)$ such that $\star_{\tau_0}$ is semisimple and $\mathrm{D^b}(F)$ admits a full exceptional collection.
    Then there exists a full exceptional collection $\{E_j\}$ such that
    $\Phi(\hat{\Gamma}\cdot\Ch(E_j))=y_j(z)$.
\end{conj}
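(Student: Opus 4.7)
The plan is to match two natural $\C$-bases of the space of flat sections of $\nabla$ on $\R_{>0}$: the \emph{topological basis} $s_j := \Phi(\hat{\Gamma}\cdot\Ch(E_j))$ attached to a full exceptional collection via the Gamma integral structure, and the \emph{Stokes basis} $y_j(z)$ determined by the asymptotics $y_j(z)e^{u_j/z}\to \Psi_j$ as $z\to 0$ in the admissible sector of Proposition~\ref{prop2.5.1_GGI}. The conjecture reduces to finding an exceptional collection for which these two bases coincide.

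First I would verify the elementary statement that for any full exceptional collection $\{E_0,\dots,E_n\}$, the classes $\{\hat{\Gamma}\cdot\Ch(E_j)\}$ form a $\C$-basis of $\H^{\bullet}(F,\C)$: multiplication by $\hat{\Gamma}=1+(\text{higher order})$ is an isomorphism, and the Chern characters of a full exceptional collection span $\H^{\bullet}(F,\Q)$ by the isomorphism between numerical $K$-theory and the even cohomology. Consequently $\{s_j\}$ is a basis of flat sections, and there exists an invertible \emph{central connection matrix} $C=(C_{ij})$ with $s_j=\sum_i C_{ij}\,y_i(z)$. The task becomes exhibiting an exceptional collection (and an ordering compatible with the ordering of eigenvalues $u_j(\tau_0)$ prescribed by the admissible phase $\phi$) for which $C$ is the identity.

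The general strategy would then invoke the Stokes/mutation correspondence. The eigenvalues $\{u_j(\tau_0)\}$ of $\mathcal{E}\star_{\tau_0}$ and the admissible phase $\phi$ determine a Stokes matrix for $\nabla$ at the irregular singularity $z=0$. Dubrovin's conjecture predicts that this Stokes matrix coincides with the matrix of Euler pairings $\chi(E_i,E_j)$ of a full exceptional collection, and that mutations of the collection correspond to the braid group action on Stokes data. Iritani's refinement of the integral structure by $\hat{\Gamma}$ further predicts that the braid group acts compatibly on the central connection matrix $C$. Given these ingredients, one shows that the braid orbit of some initial collection contains a configuration realizing $C=\mathrm{id}$, perhaps after relabeling the $y_j$ according to the ordering of $\{u_j\}$ along the ray of phase $\phi$.

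The main obstacle is twofold. First, one must establish the full Stokes/mutation correspondence with Gamma refinement in the generality of arbitrary Fano $F$ with semisimple quantum cohomology; this is essentially an enhancement of Dubrovin's conjecture and is known only in specific cases (toric varieties, Grassmannians, certain homogeneous spaces). Second, even granted this correspondence, one must show that the braid orbit actually meets the identity matrix; this amounts to a nontrivial transitivity statement about the joint action on $(C,\text{ordered eigenvalues})$. For concrete families the plan would proceed case-by-case: for toric Fanos one can use Givental's $I$-function and mirror oscillatory integrals, whose stationary-phase expansions along Lefschetz thimbles compute $y_j(z)$, and then match the thimble basis with $\hat{\Gamma}\cdot\Ch$ of the toric exceptional collection; for Grassmannians one can reduce to the projective case via quantum Satake/abelianization. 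A uniform argument appears to require input from noncommutative Hodge theory, relating the Frobenius manifold of $F$ to $\mathrm{D^b}(F)$ through the Barannikov--Kontsevich framework, which lies beyond the scope of a routine argument.
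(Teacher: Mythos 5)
The statement you are asked to prove is not a theorem of the paper at all: it is the Gamma Conjecture II of Galkin--Golyshev--Iritani, recorded in the paper as a conjecture and left unproved in general. The paper only (i) cites \cite{Galkin_2016,cotti2018helix} for the fact that it holds for Grassmannians, and (ii) reviews, in the section on quantum cohomology of projective spaces, how the known proof for $\pr^{N-1}$ goes: one writes the flat sections $y_j=\Phi(\hat{\Gamma}\cdot\Ch(\o(j)))$ via the Laplace-dual integral representation of Proposition~\ref{prop_GGI3.4.8}, and then bends the integration rays $N\zeta^{-j}+\R_{\geq 0}\zeta^{-j}$ to a common admissible direction; each crossing of an eigenvalue corresponds to an explicit right (or left) mutation of $\{\o(j)\}$, and the resulting collection $\{E_j\}$ is the one whose Gamma classes map to the asymptotically exponential basis. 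So there is no ``paper's own proof'' of the general statement to match your argument against.

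Measured as a proof of the conjecture as stated (arbitrary Fano $F$ with $\star_{\tau_0}$ semisimple and a full exceptional collection), your proposal has a genuine gap, and you in fact name it yourself: the two pivotal steps --- the Gamma-refined Stokes/mutation correspondence identifying the braid action on exceptional collections with the braid action on Stokes data and central connection matrices, and the transitivity statement that the braid orbit of $(C,\text{ordered }u_j)$ actually reaches $C=\mathrm{id}$ --- are not established but merely invoked. These are precisely the content of the (refined Dubrovin) conjecture, so the argument is circular at its core; reducing Gamma Conjecture II to them is a reformulation, not a proof. Your preliminary observation that $\{\hat{\Gamma}\cdot\Ch(E_j)\}$ is a basis and that a central connection matrix $C$ exists is fine, and your case-by-case suggestions (mirror oscillatory integrals and stationary phase along thimbles for toric Fanos, quantum Satake for Grassmannians) are indeed the routes by which the known cases are handled --- for $\pr^{N-1}$ this is exactly the bending-of-paths/mutation argument the paper reviews --- but as written they are sketches of known special cases rather than a proof of the general statement. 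If your goal is what the paper actually uses, the honest formulation is: state the conjecture, cite its validity for Grassmannians (in particular projective spaces), and give the explicit path-bending/mutation description in the projective case, which is what the rest of the paper relies on (e.g.\ in Remarks~\ref{rmk_asym_quant_Z} and~\ref{rmk_shifted_beil_exc_collection} and in Theorem~\ref{thm_conj_pr}).
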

\begin{thm}[\cite{Galkin_2016,cotti2018helix}]
    The Gamma Conjecture II holds for Grassmannians.
\end{thm}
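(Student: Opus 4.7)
The plan is to reduce to projective spaces via the abelian--nonabelian correspondence and to handle projective spaces by an explicit asymptotic computation. For $\pr := \pr^{N-1}$ the small quantum cohomology ring is $\C[H]/(H^N - Q)$ with $H = c_1(\o(1))$ and $Q = e^{\tau}$, $\tau\in \H^2(\pr,\C)$, so the eigenvalues of the Euler multiplication are the $N$ distinct values $u_j(\tau) = N\zeta^j e^{\tau/N}$ with $\zeta = e^{2\pi\ir/N}$. In particular $\star_{\tau}$ is semisimple and the normalized idempotents $\Psi_j$ of $(\H^{\bullet}(\pr,\C),\star_{\tau})$ can be written explicitly in the basis $\{1,H,\dots,H^{N-1}\}$, so that Proposition~\ref{prop2.5.1_GGI} applies and produces the basis of flat sections $y_j(z)$.

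The next step is to obtain integral representations for the canonical flat sections $\Phi(\hat{\Gamma}\cdot\Ch(\o(j)))$. The small $J$-function of $\pr^{N-1}$,
\[
J_{\pr}(z,\tau)=e^{\tau H/z}\sum_{d\ge 0}\frac{e^{d\tau}}{\prod_{k=1}^d (H+kz)^N},
\]
encodes $S(z)z^{-\mu}z^{\rho}$ through its components and its $\tau$-derivatives by Givental's mirror theorem. Rewriting $1/\prod_{k=1}^d (H+kz)^N$ in terms of Gamma factors and multiplying by $\hat{\Gamma}$ produces a Mellin--Barnes, i.e. Meijer $G$-function, representation of each $\Phi(\hat{\Gamma}\cdot\Ch(\o(j)))$ as an oscillatory integral on the mirror Landau--Ginzburg model.

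The core analytic step is a steepest descent analysis in an admissible sector: the Mellin--Barnes integral is dominated as $z\to 0$ by the critical point of the superpotential $W = x_1+\cdots+x_N$ restricted to $x_1\cdots x_N = Q$, and each critical point contributes the desired asymptotic $e^{-u_j/z}\Psi_j$. The point of inserting $\hat{\Gamma}$ is precisely to cancel the one-loop determinant of the stationary phase expansion: the $\Gamma(1+\delta_i)$ factors in $\hat{\Gamma}$ match the $\Gamma$ factors in the integrand so that no extra prefactor survives. Combined with the uniqueness part of Proposition~\ref{prop2.5.1_GGI}, this identifies $\Phi(\hat{\Gamma}\cdot\Ch(\o(j)))$ with $y_j(z)$ for an appropriate labelling, yielding the conjecture for the Beilinson collection $\{\o(j)\}$.

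For a general Grassmannian $\mathrm{Gr}(k,n)$, the plan is to invoke the abelian--nonabelian correspondence (Bertram--Ciocan-Fontanine--Kim--Sabbah) together with Golyshev's quantum Satake principle: these express $J_{\mathrm{Gr}(k,n)}$ in terms of $J$-functions of $(\pr^{n-1})^k$ pushed through an explicit Weyl-antisymmetric kernel, and they identify Kapranov's full exceptional collection $\{S^{\lambda}\mathcal{Q}^{\vee}\}$ with the asymptotically exponential flat sections of the Grassmannian. The main obstacle I expect is the careful bookkeeping of Gamma classes through this correspondence: one must verify that $\hat{\Gamma}_{\mathrm{Gr}(k,n)}$, twisted by a Vandermonde-type Weyl factor, matches the pullback of $\hat{\Gamma}_{(\pr^{n-1})^k}$ to the flag variety, and that the saddle-point expansion remains uniform in the admissible sector after this transfer. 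Once these two technical points are handled, the projective-space computation transports directly to Grassmannians.
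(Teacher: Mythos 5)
First, note that the paper does not prove this statement at all: it is quoted from \cite{Galkin_2016,cotti2018helix}, and the text only reviews the projective--space case (via the Laplace--dual integral representations of Proposition~\ref{prop_GGI3.4.8} and the bending-of-paths/mutation picture), so your outline can only be measured against the cited proofs. Your $\pr^{N-1}$ part is essentially the Galkin--Golyshev--Iritani argument (Mellin--Barnes/Meijer $G$-function representation of the flat sections $\Phi(\hat{\Gamma}\cdot\Ch(\o(j)))$, stationary phase in an admissible sector, $\hat{\Gamma}$ cancelling the one-loop factor), and your reduction of Grassmannians to projective spaces via the abelian--nonabelian correspondence and quantum Satake is also exactly their strategy, so in outline you are following the same route as the cited source rather than a new one.

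There is, however, a genuine gap in the Grassmannian step as you present it. For $\mathrm{Gr}(k,n)$ the eigenvalues of $c_1\star_0$ are sums of $k$ distinct $n$-th roots of unity (times $n$), and these are in general \emph{not} pairwise distinct: already for $\mathrm{Gr}(2,4)$ the eigenvalue $0$ occurs with multiplicity two. Hence the framework you rely on --- an asymptotically exponential fundamental solution attached to a semisimple point with simple spectrum and an admissible phase, as in Proposition~\ref{prop2.5.1_GGI}, plus a clean saddle-point expansion with one critical value per exceptional object --- does not apply verbatim, and handling this coalescence of eigenvalues is a substantial part of the cited proofs (it is precisely the issue addressed by the isomonodromic-deformation analysis of Cotti--Dubrovin--Guzzetti, and by the extra arguments in GGI). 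Relatedly, calling the transfer through the abelian--nonabelian correspondence ``bookkeeping of Gamma classes'' understates the actual content: one must prove that the Satake identification intertwines the quantum connections, takes $\hat{\Gamma}_{\pr^{n-1}}$-type data to $\hat{\Gamma}_{\mathrm{Gr}(k,n)}\cdot\Ch$ of the Kapranov objects (wedge/Schur functors of the Beilinson collection), and preserves the asymptotic class of flat sections uniformly in the sector; this identification is the heart of the theorem, not a technical afterthought, so as written your proposal does not yet constitute a proof.
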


In \cite{Iritani_2009,Galkin_2016} Iritani and Galikin--Golyshev--Iritani  propose a \text{quantum cohomology central charge} defined as follows.
For a vector bundle $V$ on $F$, we set
\begin{equation}\label{eq_quantum_central_charge}
    Z(V):=(2\pi z)^{\mathrm{dim}(F)/2}\int_{F}\Phi(\hat{\Gamma}\cdot\Ch(V))(z).
\end{equation}
Note that $Z(V)$ is defined for $z\in\R_{>0}$ but it can be analytically continued on the universal cover of $\C^*$.

\begin{rmk}\label{rmk_gammaII_implies_asympZ}
    Suppose that $F$ satisfies the Gamma Conjecture II and that quantum product  $\star_0$ at $\tau_0=0$ is semisimple.
As above we consider  the idempotent basis $\{\Psi_j\}$ for $\star_{0}$ i.e., $\Psi_j\star_0\Psi_i=\delta_{ij}\Psi_j$.
We observe that  $\{\Psi_j\}$ is a basis of eigenvectors of $\mathcal{E}\star_{0}$ and  $\mathcal{E}_{\tau=0}=\mathrm{c}_1(F)$.
Let us denote by $u_j=u_j(0)$ the eigenvalues of $\mathrm{c}_1(F)\star_0$ i.e., $\mathrm{c}_1(F)\star_0\Psi_j=u_j\Psi_j$ and let us fix $\phi\in\R$ an admissible phase.
We denote by $\{E_j\}$ the full exceptional collection corresponding to the asymptotically exponential fundamental solutions of $\nabla$, via $\Phi$.
Then we have the following asymptotics
\begin{equation}\label{eq_asym_quant_central_charge}
    Z(E_j)\sim \sqrt{(\Psi_j,\Psi_j)_F}(2\pi z)^{\mathrm{dim}(F)/2}e^{-u_j/z}
\end{equation}
for $z\to 0^+$ on $|\arg(z)-\phi|<\frac{\pi}{2}+\epsilon$, where $(\cdot,\cdot)_F$ is the Poincaré pairing on $\H^{\bullet}(F,\C)$.
For the details see \cite[Paragraphs 3.8 and 4.7]{Galkin_2016}.
\end{rmk}

\subsection{Quantum cohomology of projective spaces}\label{subsection_quant_of_proj_spaces}

In this final section we recall the explicit description of the quantum cohomology of the projective spaces and we explain the solution of the Gamma Conjecture II for projective spaces.
Let us denote the projective space of dimension $N-1$ by $\pr=\pr^{N-1},N\geq 2$.
We will denote by $h\in \H^2(\pr)$ the class of a hyperplane; we will use the basis $1,h,\dots,h^{N-1}$ for the cohomology.
We observe  that the small quantum product at $\tau\equiv \tau h \in\H^2(\pr,\C)\cong\C$ is given by
\begin{equation*}
    h\star_{\tau}h^i=
    \begin{cases}
        h^{i+1} &\text{ if } 0\leq i\leq N-2\\
        e^{\tau}\cdot1 &\text{ if } i=N-1.
    \end{cases}
\end{equation*}

The quantum product by the Euler vector field at $\tau$ is expressed in this basis by the following matrix
\begin{equation}\label{def_E_tau}
\mathcal{E}_{\tau}:=c_1(\pr)\star_{\tau}=N
    \begin{pmatrix}
        0&0&0&\cdots &0&e^{\tau}\\
        1&0&0&\cdots &0&0\\
        0&1&0&\cdots &0&0\\
        &&&\cdots&&\\
        0&0&0&\cdots&1&0
    \end{pmatrix}.
\end{equation}
The eigenvalues of $c_1(\pr)\star_{\tau}$ are $\zeta^i N e^{\tau/N}$ where $\zeta=\exp{2\pi\ir/N}$.

The operator $\mu$ is given by the following diagonal matrix
\begin{equation*}
    \mu=\mathrm{diag}\left(-\frac{N-1}{2},1-\frac{N-1}{2},2-\frac{N-1}{2},\dots,\frac{N-1}{2}\right).
\end{equation*}
So the quantum connection reads
\begin{equation}\label{eq_quan__connection_proj}
    \nabla_{z\partial_z}=z\partial_z-\frac{1}{z}\mathcal{E}_{\tau=0}+\mu.
\end{equation}
Let us recall the isomorphism~\eqref{eq_iso_Phi}
\begin{equation*}
    \Phi : \H^{\bullet}(\pr)\to \left\{ s:\R_{> 0}\to \H^{\bullet}(\pr): \nabla s=0   \right\},\qquad
    \alpha \mapsto (2\pi )^{-\frac{\mathrm{dim}(\pr)}{2}}S(z)z^{-\mu}z^{\rho}\alpha.
\end{equation*}
Consider the flat sections $y_j(z):=\Phi(\Hat{\Gamma}_{\pr}\Ch(\o(j)))$: for $j=0,\dots,N-1$ they form a basis of the space of solutions of the equation $\nabla  =0$.

\begin{prop}[{\cite[Proposition 3.4.8]{Galkin_2016}}]\label{prop_GGI3.4.8}
    In the notation above we have:
    \begin{enumerate}
        \item [(1)] the limit $v:=\lim_{z\to 0^{+}}e^{N/z}y_0(z)\in \H^{\bullet}(\pr,\C)$ is in the $N$-eigenspace of $\mathcal{E}_{\tau=0}$ and satisfies $(v,v)_{\pr}=1$,
        \item [(2)] let $\Hat{\nabla}_{\partial_{\lambda}}=(\partial_{\lambda}+(\lambda-\mathcal{E}_{\tau=0})^{-1}\mu)$ be the Laplace dual of the quantum connection~\eqref{eq_quan__connection_proj}, then there exists a $\Hat{\nabla}_{\partial_{\lambda}}$-flat section $\varphi(\lambda)$ which is holomorphic near $\lambda=N$ such that 
        $$\varphi(N)=v$$ and 
        \begin{equation*}
            y_0(z)=\frac{1}{z}\int_{N}^{\infty}\varphi(\lambda)e^{-\lambda/z}d\lambda.
        \end{equation*}
    \end{enumerate}
\end{prop}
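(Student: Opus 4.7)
The plan is to handle the two parts in sequence, using the validity of Gamma Conjecture~II for projective spaces (established in~\cite{Galkin_2016,cotti2018helix}) to pin down the asymptotics of $y_0$, and then to produce the integral representation by a standard Laplace-dual computation.

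For~(1), first observe that $y_0(z)=\Phi(\hat{\Gamma})$ since $\Ch(\o)=1$. Gamma Conjecture~II identifies the basis $\{y_j\}$ with (a reordering of) the asymptotically exponential fundamental solution supplied by Proposition~\ref{prop2.5.1_GGI}. Among the eigenvalues $\zeta^i N$ of $\mathcal{E}_{\tau=0}$ with $\zeta=e^{2\pi\ir/N}$, only $u_0=N$ lies on the positive real axis, and under this identification it is the eigenvalue paired with $\o$. Proposition~\ref{prop2.5.1_GGI} then yields $y_0(z)e^{N/z}\to \Psi_0$ as $z\to 0^+$, where $\Psi_0$ is the unit-norm idempotent attached to $N$. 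Setting $v:=\Psi_0$ gives both the eigenvalue statement and $(v,v)_{\pr}=1$. A concrete verification via~\eqref{def_E_tau} shows $v=N^{-1/2}(1+h+\cdots+h^{N-1})$, and $(v,v)_{\pr}=1$ follows from $(h^i,h^j)_{\pr}=\delta_{i+j,N-1}$.

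For~(2), the starting point is the observation that the Laplace transform sends $\hat{\nabla}$-flat sections to $\nabla$-flat sections. A direct integration by parts on $\tilde y(z):=\tfrac{1}{z}\int_N^{\infty}\varphi(\lambda)e^{-\lambda/z}\,d\lambda$ yields, for any sufficiently regular $\varphi$,
\[
\nabla_{z\partial_z}\tilde y(z)=\tfrac{1}{z}\int_N^{\infty}\bigl((\lambda-\mathcal{E}_{\tau=0})\partial_\lambda\varphi+\mu\varphi\bigr)e^{-\lambda/z}\,d\lambda+\tfrac{1}{z}(N-\mathcal{E}_{\tau=0})\varphi(N)e^{-N/z},
\]
so $\tilde y$ is $\nabla$-flat precisely when $\hat{\nabla}_{\partial_\lambda}\varphi=0$ and $\varphi(N)\in\ker(N-\mathcal{E}_{\tau=0})=\C\cdot v$. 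To exhibit such a $\varphi$ I would analyse $\hat{\nabla}$ at its regular singular point $\lambda=N$: since $(\lambda-\mathcal{E}_{\tau=0})^{-1}$ has a simple pole with residue the rank-one projector onto $\C\cdot v$, the Frobenius recursion for a holomorphic solution with $\varphi(N)=v$ is solvable provided $\mu v$ lies in the image of $N-\mathcal{E}_{\tau=0}$, equivalently $\mu v\perp v$ under $(\cdot,\cdot)_{\pr}$ (using that $\mathcal{E}_{\tau=0}$ is $(\cdot,\cdot)_{\pr}$-self-adjoint). With the explicit form of $\mu$ and $v$ recalled above, this reduces to $\sum_{k=0}^{N-1}(k-\tfrac{N-1}{2})=0$, which is immediate. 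Finally, Watson's lemma applied to the integral gives $\tilde y(z)\sim v\,e^{-N/z}$ as $z\to 0^+$, matching the asymptotic of $y_0$ from part~(1); the uniqueness clause in Proposition~\ref{prop2.5.1_GGI} then forces $\tilde y=y_0$.

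The most delicate step is the regular-singular analysis of $\hat{\nabla}$ at $\lambda=N$: one must rule out logarithmic terms in the Frobenius expansion of $\varphi$, which is exactly the compatibility $\mu v\perp v$. For $\pr^{N-1}$ this is immediate from the symmetry of $\mu$ around zero, but conceptually the underlying mechanism is the conformality of the Frobenius manifold combined with semisimplicity, which is what ensures the existence of a genuinely holomorphic (not merely formal) Laplace-dual section $\varphi$ near the singularity.
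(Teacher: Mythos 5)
The paper itself does not prove this statement: it is quoted verbatim from Galkin--Golyshev--Iritani, so the comparison is with that source. Measured against it, your argument has a genuine circularity in part~(1). You invoke Gamma Conjecture~II for $\pr^{N-1}$ and then assert that ``under this identification'' the eigenvalue $N$ of $\mathcal{E}_{\tau=0}$ is the one paired with $\o$. Gamma Conjecture~II only asserts the existence of \emph{some} full exceptional collection matched with the asymptotically exponential basis of Proposition~\ref{prop2.5.1_GGI}; it does not say that $\o$ belongs to that collection, nor which eigenvalue it is attached to. In \cite{Galkin_2016} (and in the exposition of Section~\ref{subsection_quant_of_proj_spaces} here) that matching is extracted from the integral representations $y_j(z)=\frac{1}{z}\int\varphi_j(\lambda)e^{-\lambda/z}d\lambda$ and the bending-of-paths analysis, which are consequences of precisely the proposition you are trying to prove; moreover the cited proof of Gamma Conjecture~II for $\pr^{N-1}$ in \cite{Galkin_2016} goes through this proposition. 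So the key analytic content of~(1) --- that $e^{N/z}y_0(z)=e^{N/z}\Phi(\hat{\Gamma})$ has a nonzero limit, with the normalized idempotent as limit --- is not established by your argument but presupposed. A non-circular route must either prove this asymptotic directly (e.g.\ the explicit hypergeometric/stationary-phase analysis of the quantum differential equation of $\pr^{N-1}$, as in \cite[Section 5]{Galkin_2016}), or cite an independent proof of the Gamma conjectures (e.g.\ \cite{cotti2018helix}) \emph{together with} an argument that the eigenvalue $N$ is the one assigned to $\o$.

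Part~(2) is sound in outline: the integration-by-parts identity is correct, the no-logarithm condition at the regular singular point $\lambda=N$ is indeed $(\mu v,v)_{\pr}=0$ (equivalently the residue $P_{0}\mu$ squares to zero, so the solution with value $v$ at $\lambda=N$ is holomorphic), your explicit $v=N^{-1/2}(1+h+\cdots+h^{N-1})$ and the Watson's lemma step check out. Two points are glossed, however. First, you construct $\varphi$ only locally near $\lambda=N$; to write $\int_{N}^{\infty}$ you must continue $\varphi$ along the whole ray (possible, since no other eigenvalue $N\zeta^{k}$, $\zeta=e^{2\pi\ir/N}$, lies on $\R_{>N}$) and control its growth at $\lambda=\infty$ (regular singularity of $\hat{\nabla}$ there gives moderate growth), so that the integral converges and the boundary term at infinity in your integration by parts vanishes. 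Second, the identification $\tilde{y}=y_{0}$ does not follow from ``the uniqueness clause in Proposition~\ref{prop2.5.1_GGI}'', which concerns the full fundamental solution with prescribed asymptotics on a sector of width greater than $\pi$, not a single section compared along $\R_{>0}$; what you need is that a flat section which is $o(e^{-N/z})$ as $z\to0^{+}$ must vanish (because $e^{-N/z}$ is the fastest-decaying exponential among the $e^{-N\zeta^{k}/z}$ on the positive real axis, and conjugate pairs cannot cancel identically), together with the sectorial form of the asymptotics of $y_{0}$ --- which is again the content of part~(1) that your argument has not independently secured.
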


Proposition~\ref{prop_GGI3.4.8} above gives us the following identities
\begin{equation*}\label{eq_6}
    y_j(z)=\frac{1}{z}\int_{N\zeta^{-j}+\R_{\geq 0}\zeta^{-j}}\varphi_j(\lambda)e^{-\lambda/z}d\lambda,
\end{equation*}
where the expression is valid on $-\frac{\pi}{2}-\frac{2\pi j}{N}<\mathrm{\arg}(z)<\frac{\pi}{2}-\frac{2\pi j}{N}$, with $\zeta=e^{2\pi \ir /N}$ and 
$$\varphi_j(\lambda):=e^{2\pi\ir j\mu /N}\varphi(\zeta^j\lambda).$$

For small $1\gg\phi> 0$ the asymptotically exponential fundamental solutions are given by
\begin{equation*}\label{eq_def_x_j}
x_j(z):=\frac{1}{z}\int_{N\zeta^{-j}+\R_{\geq 0}e^{\ir \phi}}\varphi_j(\lambda)e^{-\lambda/z}d\lambda  
\end{equation*}
in the sector $|\mathrm{arg}(z)-\phi|<\frac{\pi}{2}+\varepsilon$.
More explicitely we have 
\begin{equation*}
    x_j(z)\sim e^{-\zeta^{-j}N/z}\varphi_j(\zeta^{-j}N)\qquad \text{ for } z\to 0, |\mathrm{arg}(z)-\phi|<\frac{\pi}{2}+\varepsilon.
\end{equation*}

In \cite{Galkin_2016} the authors proves that the basis of solutions 
$\{x_j\}$ are associated via $\Phi$ to a mutation $\{E_j\}$ of the exceptional collection 
$\{\o(j)\}$ i.e., $\Phi(\hat{\Gamma}\Ch(E_j))=x_j$: this proves the Gamma Conjecture II for projective spaces.
The mutation is explicit: to get $x_j$ we need to bend the integration path $N\zeta^{-j}+\R_{\geq 0}\zeta^{-j}$ in the expression of $y_j$.
If we choose to bend the path in the clockwise direction, during the journey the radius will meet some $N\zeta^{-(j+1)}, \dots,N\zeta^{-(j+r)}$. 
When the radius meets a $N\zeta^{-i}$ we perform a right mutation.
For example the first mutation takes place when the radius meets $N\zeta^{-(j+1)}$ so we do a right mutation on $\o(j)$ i.e.,
\begin{equation*}
    \{\dots, \o(j),\o(j+1),\dots\}\mapsto \{\dots,\o(j+1), \mathrm{R}_{\o(j+1)}(\o(j)) ,\dots\}
\end{equation*}
see Figure~\ref{figure_bending_mutation}.
Bending the integration paths in anti clockwise direction translates into left mutations, see \cite[Paragraph 2.6]{Galkin_2016}{} for the details.
\begin{figure}[ht]
\includegraphics[width=8cm]{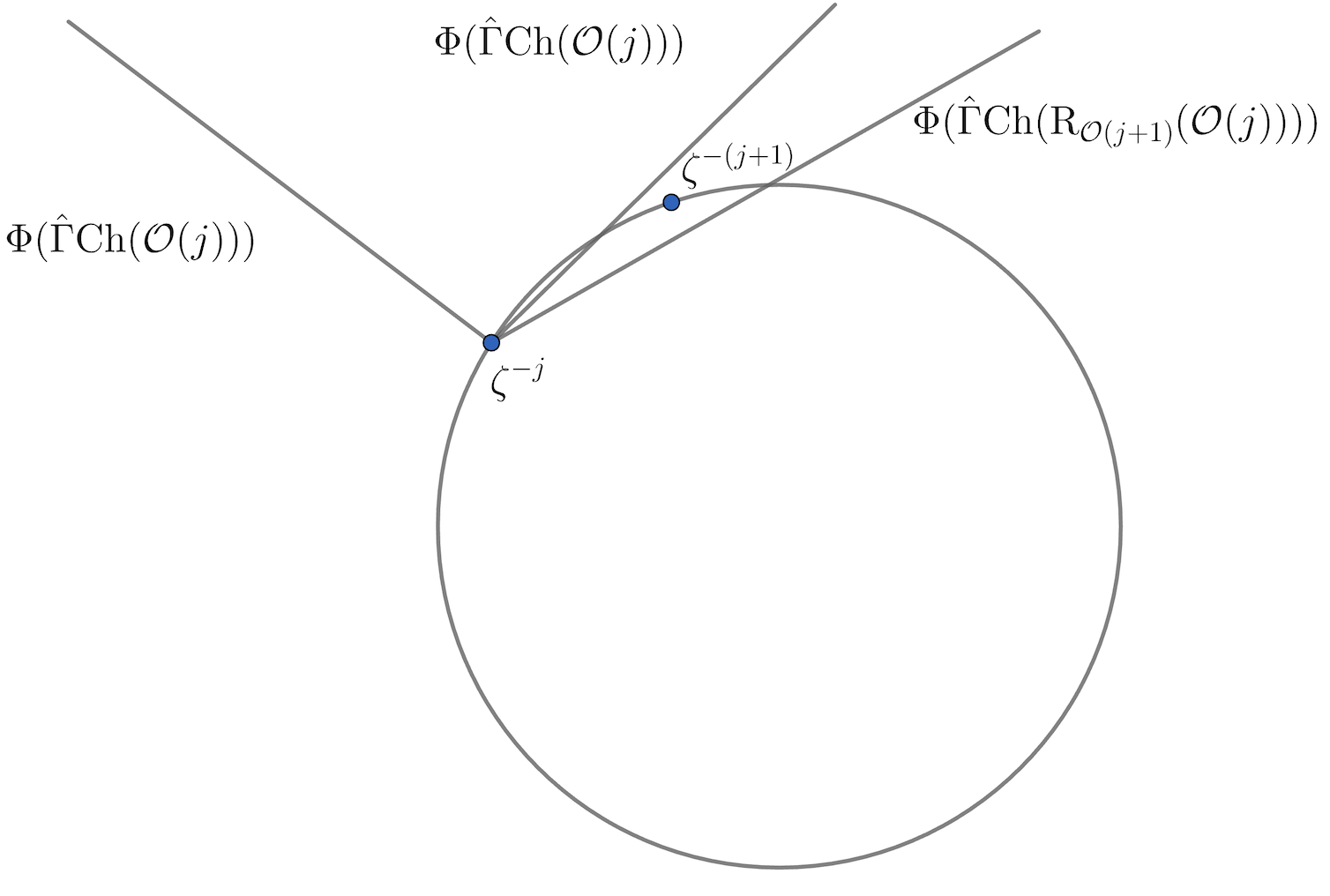}
\caption{Bending the integration path and the associated mutation.}
\label{figure_bending_mutation}
\end{figure}
\begin{rmk}
    Let us note that in the proof of the Gamma Conjecture II for the projective spaces there is no canonical choice of helix foundation for the helix $\{\o(i)\}_{i\in \Z}$. 
    What we mean is that the computations made in \cite[Section 5]{Galkin_2016} holds for all $j\in \Z$.
    In particular we can solve Gamma Conjecture II using the exceptional collection 
    \begin{equation*}
        \left\{\o\left(\ceil*{\frac{N}{2}}\right),\o\left(\ceil*{\frac{N}{2}}+1\right), \dots, \o\left(\ceil*{\frac{N}{2}}+N-1\right)\right\}.
    \end{equation*}
\end{rmk}
\begin{rmk}\label{rmk_asym_quant_Z}
    Let us denote by $\{E_0, \dots E_{N-1}\}$ the full exceptional collection obtained by $\{\o, \dots, \o(N-1)\}$ at the end of the procedure of bending the integration paths that produces $\{x_j\}$ form $\{y_j\}$.
    Then we have that the quantum cohomology central charges 
    $Z(E_j)=(2\pi)^{\frac{N-1}{2}}z^{\frac{N-1}{2}}\int_{\pr}x_{i(j)}$
    where $i:\{0,\dots, N-1\}\to \{0,\dots, N-1\}$ is an explicit  bijection.
    For $\R_{> 0}\ni z=r\to 0$
    we have 
    \begin{equation*}
    \begin{split}
        Z(E_j)&\sim (2\pi r)^{\frac{N-1}{2}}e^{-\zeta^{-i(j)}N/r}\int_{\pr}\varphi_{i(j)}(\zeta^{-i(j)}N)=\\
        &=(2\pi r)^{\frac{N-1}{2}} e^{-\zeta^{-i(j)}N/r}\zeta^{i(j)\frac{N-1}{2}}\int_{\pr}\Hat{\Gamma}_{\pr}.
    \end{split}
    \end{equation*}
\end{rmk}

\begin{rmk}\label{rmk_shifted_beil_exc_collection}
    Remark~\ref{rmk_asym_quant_Z} above applied to the solution of the Gamma Conjecture II associated to 
    \begin{equation}\label{eq_exc_coll_twisted}
        \left\{\o\left(\ceil*{\frac{N}{2}}\right),\o\left(\ceil*{\frac{N}{2}}+1\right), \dots, \o\left(\ceil*{\frac{N}{2}}+N-1\right)\right\}
    \end{equation} 
    reads as follows.

    Let us denote by $\{E_0, \dots E_{N-1}\}$ the full exceptional collection obtained by the exeptional collection in~\eqref{eq_exc_coll_twisted} at the end of the procedure of bending the integration paths that produces $\{x_j\}$ form $\{y_j\}$.
    Then we have that the quantum cohomology central charges 
    $Z(E_j)=(2\pi)^{\frac{N-1}{2}}z^{\frac{N-1}{2}}\int_{\pr}x_{i(j)}$
    with $i:\{0,\dots, N-1\}\to \{\ceil*{\frac{N}{2}},\dots, \ceil*{\frac{N}{2}}+N-1\}$ a bijection.
    For $\R_{> 0}\ni z=r\to 0$
    we have 
    \begin{equation*}
    \begin{split}
        Z(E_j)&\sim (2\pi r)^{\frac{N-1}{2}} e^{-\zeta^{-i(j)}N/r}\zeta^{i(j)\frac{N-1}{2}}\int_{\pr}\Hat{\Gamma}_{\pr}.
    \end{split}
    \end{equation*}
\end{rmk}

\section{Quasi-convergent paths from quantum cohomology}

In~\cite{DHL23noncomm} Halpern-Leistner conjectures many interesting relations between the quantum cohomology of varieties, semiorthogonal decompositions of the derived categories of coherent sheaves, paths in the space of Bridgeland stability conditions and birational geometry.
We state a particular case of {\cite[Proposal III]{DHL23noncomm}}.
\begin{proposal}\label{proposalIII_DHL}
    Let us consider a Fano variety $F$.
    Then there exists a fundamental solution $\Phi'$ of $\Tilde{\nabla}_{|_{\tau}}$ for some $\tau\in B$ and a quasi--convergent path
    $\sigma_r=(Z', \slicing_r)$, $r\in\R_{>0}$ in $\Stab(\mathrm{D^b}(F))$ with
    \begin{equation*}
        Z'(\alpha):=\int_{F}\Phi'(\alpha)
    \end{equation*}
    for $\alpha\in \H^{\bullet}(F,\C)$.
    Furthermore, we have the following \emph{spanning condition:}
    for any $s\in \R$ the real part of an eigenvalue of $\mathcal{E}\star_{\tau}$, the subspace
    \begin{equation*}
        F^s:=\{\alpha\in \H^{\bullet}(F,\C)| \ln{\norm{\Phi'(\alpha)}}\leq \frac{-s}{r} +o(1/r) \text{ as }r\to 0^+\}
    \end{equation*}
    should be spanned over $\C$ by the classes of eventually semistable objects $E\in \mathrm{D^b}(F)$ with $\liminf_{r\to 0^+}\frac{|Z'(E)|}{\norm{\Phi'(E)}}>0$.
\end{proposal}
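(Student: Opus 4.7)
The plan is to leverage Gamma Conjecture II, which for projective spaces is established in Subsection~\ref{subsection_quant_of_proj_spaces}, to choose $\Phi'$ as the isomorphism from $\H^{\bullet}(F,\C)$ to the space of flat sections of $\Tilde{\nabla}_{|_{\tau=0}}$ that sends each $\hat{\Gamma}\cdot \Ch(E_j)$ to the asymptotically exponential fundamental solution $x_j(z)$ of Proposition~\ref{prop2.5.1_GGI}. Here $\{E_j\}$ is the full exceptional collection supplied by Gamma Conjecture II (in the projective-space case, a mutation of a twist of the Beilinson collection, obtained by bending the contour as in Subsection~\ref{subsection_quant_of_proj_spaces}). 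With this choice, $Z'(E_j)=(2\pi r)^{\dim F/2}\int_F x_j$ satisfies the explicit asymptotic $Z'(E_j)\sim c_j\,(2\pi r)^{\dim F/2}e^{-u_j/r}$ as $r\to 0^+$ coming from~\eqref{eq_asym_quant_central_charge} and Remark~\ref{rmk_asym_quant_Z}, where the $u_j$ are the eigenvalues of $\mathcal{E}\star_{\tau=0}$.

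\textbf{Construction and quasi-convergence.} With these asymptotics in hand, I would define $\sigma_r$ via Theorem~\ref{thm_stabcond_fromexc} on $\{E_j\}$, setting $m_j(r):=|Z'(E_j)|$ and taking phases obtained by lifting $(1/\pi)\arg Z'(E_j)$ to $\R$ after suitable integer shifts $p_j(r)$. The key computation is that $\arg Z'(E_j)=-\Im(u_j)/r+O(1)$, so phases of adjacent generators spread at rate $1/r$; for $r$ small enough the required $\lceil\phi_r(E_j)\rceil<\phi_r(E_{j+1})$ holds after relabeling along a mutation, and by Remark~\ref{rmk_alg_stab} the support-property constant depends only on the masses, giving a uniform bound along the path. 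Quasi-convergence then follows by studying the Harder--Narasimhan filtrations in the heart $\langle E_j[p_j(r)]\rangle$: each $E\in\mathrm{D^b}(F)$ has HN cones given by direct sums of shifts of the $E_j$, so the limit semistable filtration of Definition~\ref{def_quaiconpath}(1) is obtained by grouping indices according to the value of $\Re(u_j)$, with $<^{\text{inf}}$ ordering these groups and $\liminf_{r\to 0^+}(\phi_r(G_i)-\phi_r(G_{i+1}))$ of order $(1/r)\cdot\Delta\Re(u)>0$. The existence of the limit in Definition~\ref{def_quaiconpath}(2) follows from the explicit $\ln|Z'(E)|=-\Re(u_E)/r+O(\ln r)$ and $\phi_r(E)=-\Im(u_E)/(\pi r)+O(1)$, which make the ratios $l_r(E|F)/(1+|l_r(E|F)|)$ convergent.

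\textbf{Spanning condition and main obstacles.} For the spanning condition, given $s$ the real part of some eigenvalue, the subspace $F^s$ is exactly the $\C$-span of those $\hat{\Gamma}\cdot \Ch(E_j)$ with $\Re(u_j)\leq s$, and each such $E_j$ is $\sigma_r$-stable with $|Z'(E_j)|/\norm{\Phi'(E_j)}$ bounded below since both scale as $r^{\dim F/2}e^{-\Re(u_j)/r}$ up to polynomial factors. The main obstacle I foresee is arranging the $\sigma_r$ into an actual \emph{continuous} path: as $r$ varies the integer shifts $p_j(r)$ jump, and one must patch the local constructions using the $\Tilde{\GL}_2(\R)$-action on $\Stab(\mathrm{D^b}(F))$ so the heart rotates continuously while the phases evolve according to $Z'$, without producing spurious discontinuities in the slicing. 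A secondary and more serious obstacle in the general Fano setting is that Gamma Conjecture II is not known beyond Grassmannians, so the identification of $\{E_j\}$ with asymptotically exponential flat sections must be assumed or established case by case; for projective spaces the contour-bending argument of Subsection~\ref{subsection_quant_of_proj_spaces} supplies exactly this input, which is why my plan goes through in that case.
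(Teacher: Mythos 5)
Your overall strategy --- use the solution of Gamma Conjecture II for $\pr^{N-1}$ to get $\ln Z'(E_j)\sim -u_j/r$, feed these data into Theorem~\ref{thm_stabcond_fromexc}, and deduce quasi-convergence from the fact that eventually only the $E_j$ and their shifts are stable --- is indeed the route the paper takes (Proposition~\ref{prop_extimated_Ztau}, Theorem~\ref{thm_gammaII_proposalIII}, Theorem~\ref{thm_proposalIII_proj}). But the execution has a genuine gap, rooted in a swap of real and imaginary parts. Since $Z'(E_j)\sim c_j(2\pi r)^{\dim F/2}e^{-u_j/r}$, the phases behave like $-\Im(u_j)/(\pi r)$, while only $\ln|Z'(E_j)|$ behaves like $-\Re(u_j)/r$. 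Hence both the hypothesis of Theorem~\ref{thm_stabcond_fromexc} (the spreading of the phases) and the grouping that produces the limit semistable filtration and the resulting SOD are governed by the \emph{imaginary} parts of the eigenvalues (Lemma~\ref{lem_3}, Proposition~\ref{prop_exc_limitsemist_obj}, Theorem~\ref{thm_conj_pr}); the real parts enter only in the spanning condition (and there with the inequality $\Re(u_j)\geq s$, not $\leq s$). So your verification of Definition~\ref{def_quaiconpath}(1) --- ``group the indices by $\Re(u_j)$, with phase gaps of order $\Delta\Re(u)/r$'' --- is not correct as stated.

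This is not merely cosmetic, because you fix $\tau=0$. The construction with a fixed central charge needs the eigenvalues to have pairwise distinct imaginary parts (exactly the hypothesis of Theorem~\ref{thm_gammaII_proposalIII}); at $\tau=0$ the eigenvalues are $N\zeta^{j}$, and for every even $N$ some imaginary parts coincide (already for $\pr^{3}$: $4$ and $-4$). For such pairs the phases never separate, so neither the condition $\ceil{\phi_j}<\phi_{j+1}$ for all $r\in(0,\delta)$ nor the purity statement (Proposition~\ref{prop_pure_alg_stabcond}) underlying your description of the HN filtrations is available. The paper sidesteps this by choosing a \emph{generic} $\tau\in\H^{2}(\pr,\C)$, which the Proposal permits (Theorem~\ref{thm_proposalIII_proj}), and treats non-generic $\tau$ only through the extra deformation/support-property argument of Theorem~\ref{thm_conj_pr}. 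A second unaddressed point is the compatibility of the asymptotic phase order with the exceptional order: Corollary~\ref{cor_neg_conj_pr} shows that for the collection mutated from $\{\o,\dots,\o(N-1)\}$ near $\tau=0$ there is no quasi-convergent path with this central charge having all $E_j$ limit semistable. The paper fixes the order either by starting from the twisted foundation $\left\{\o\left(\ceil*{\frac{N}{2}}\right),\dots\right\}$ with a small admissible phase (Remark~\ref{rmk_shifted_beil_exc_collection}), or, in Theorem~\ref{thm_gammaII_proposalIII}, by exploiting the freedom the Proposal gives in the choice of $\Phi'$ to rescale each column by $e^{\ir\beta_j}$ and thereby prescribe the starting phases; your ``relabeling along a mutation'' is not a substitute for either step. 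By contrast, the continuity issue you single out as the main obstacle is not one: since $\a_j(r)=e^{\ir\beta_j}Z^{\tau}(E_j)$ never vanishes, the phases $\frac{1}{\pi}\Im\ln\a_j(r)$ lift continuously and no patching by the $\Tilde{\GL}_2(\R)$-action is needed.
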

In  \cite[Remark 13]{DHL23noncomm} the proposed candidate for $Z'$ is the quantum cohomology central charge~\eqref{eq_quantum_central_charge}.
We observe that the easiest way to construct a path of stability conditions  is trying to define it in the algebraic part of the stability space.
Indeed it is enough to  have a path of phases $\phi_i(\bullet)$ and an exceptional collection $\{E_j\}$ satisfying the assumptions in Theorem~\ref{thm_stabcond_fromexc}.
The paths of phases given by the quantum cohomology central charge~\eqref{eq_asym_quant_central_charge} does not always satisfy the assumptions of Theorem~\ref{thm_stabcond_fromexc}, see Corollary~\ref{cor_neg_conj_pr} for a negative result.
However we will fulfill \cite[Remark 13]{DHL23noncomm}, namely we will prove the following conjecture for projective spaces, see Theorem~\ref{thm_conj_pr}.
\begin{conj}\label{conj_sigma_from_qcoh}
    Let us consider a Fano variety $F$, $\tau\in  \H^{2}(F,\C)$, $\Phi^{\tau}$ the canonical fundamental solution of $\Tilde{\nabla}_{|_{\tau}}$ and the induced isomorphism
    \begin{equation*}\label{eq_iso_Phi_tau}
\begin{split}
    \Phi^{\tau} : \H^{\bullet}(F)&\overset{\cong}{\longrightarrow} \left\{ s:\R_{> 0}\to \H^{\bullet}(F): \Tilde{\nabla}_{|_{\tau}} s=0   \right\}\\
    \alpha &\longmapsto (2\pi )^{-\frac{\mathrm{dim}(F)}{2}}S(\tau,z)z^{-\mu}z^{\rho}\alpha.
\end{split}
\end{equation*}
Then there exists a quasi-convergent path
    $\sigma_r=(Z^{\tau}, \slicing_r)$, $z=r\in\R_{>0}$ for $r\to0^+$ with
    \begin{equation}\label{eq_def_Ztau}
        Z^{\tau}(V):=(2\pi r)^{\frac{\mathrm{dim}(F)}{2}}\int_{F}\Phi^{\tau}(\hat{\Gamma}\cdot\Ch(V))
    \end{equation}
    for any vector bundle $V$ on $F$.
\end{conj}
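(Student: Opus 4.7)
The natural plan is to mimic the structure available for $\pr^{N-1}$: construct $\sigma_r$ as an algebraic stability condition coming from a full strong exceptional collection whose phases are prescribed by the small-$r$ asymptotics of $Z^\tau$, and then verify the quasi-convergence axioms directly from explicit asymptotic formulas. The input I would assume are two (conjectural in general) ingredients: that the small quantum product $\star_\tau$ is semisimple at the chosen $\tau \in \H^2(F,\C)$, and that $F$ satisfies a $\tau$-deformed Gamma Conjecture~II, i.e.\ there is a full strong exceptional collection $\{E_0,\ldots,E_n\}\subset \D^b(F)$ with $\Phi^\tau(\hat\Gamma\cdot\Ch(E_j)) = y_j(\tau,z)$ for the asymptotically exponential flat sections $y_j$ of Proposition~\ref{prop2.5.1_GGI}. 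For general Fano $F$ this is deeply open, but it is known for Grassmannians and it is precisely what the projective-space argument later in the paper establishes by hand.

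Step one is to extract the leading-order asymptotics. Writing the eigenvalues of $\mathcal{E}\star_\tau$ as $u_j = a_j + \ir b_j$ and denoting by $\Psi_j$ the normalized idempotents, Proposition~\ref{prop2.5.1_GGI} combined with~\eqref{eq_def_Ztau} gives
\begin{equation*}
Z^\tau(E_j) \sim \sqrt{(\Psi_j,\Psi_j)_F}\,(2\pi r)^{\dim(F)/2}\, e^{-u_j/r}, \qquad r\to 0^+,
\end{equation*}
so $\ln|Z^\tau(E_j)| = -a_j/r + \tfrac{\dim F}{2}\ln r + O(1)$ and $\pi\phi_r(E_j) = -b_j/r + O(1)$. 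After a generic choice of admissible direction the $b_j$ are pairwise distinct, and after any necessary mutations the exceptional collection can be ordered so that $b_0 > b_1 > \cdots > b_n$; this compatibility between the admissible-phase ordering and the mutation structure is part of the content of Gamma Conjecture~II.

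Step two realises the path inside $\Theta_{\mathcal{E}}$ via Theorem~\ref{thm_stabcond_fromexc}. The gap $\phi_r(E_{j+1})-\phi_r(E_j) = (b_j-b_{j+1})/(\pi r) + O(1)$ diverges to $+\infty$ as $r\to 0^+$, so for any small $r$ one can subtract integers $p_j(r)\in\Z$ to bring the phases into a fundamental strip and verify $\ceil{\phi_j+p_j} < \phi_{j+1}+p_{j+1}$. This produces a stability condition $\sigma_r = (Z^\tau,\slicing_r)\in\Theta_{\mathcal{E}}$ for all small $r$; the support property holds pointwise in $r$ by Remark~\ref{rmk_alg_stab}, while continuity of $r\mapsto\sigma_r$ follows from holomorphicity of $Z^\tau$ together with the uniqueness in Theorem~\ref{thm_stabcond_fromexc}, the discrete jumps in the $p_j(r)$ being absorbed by the $\Tilde{\GL}_2^+(\R)$-action on $\Stab(\D^b(F))$.

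Finally I would verify the two quasi-convergence axioms. Since $\{E_j\}$ is full, every $E\in\D^b(F)$ has a filtration whose cones $G_j$ are direct sums of shifted $E_j$'s, hence $\sigma_r$-semistable for all small $r$ and a fortiori limit semistable; the phase gap $\phi_r(G_j)-\phi_r(G_{j+1})$ diverges linearly in $1/r$, yielding~(1) of Definition~\ref{def_quaiconpath}. For~(2), limit semistable $E,F$ supported respectively on $E_i, E_{i'}$ satisfy $l_r(E|F) = (u_{i'}-u_i)/r + O(\ln r)$, which either stays bounded (when $u_i=u_{i'}$) or diverges along a definite ray of $\C$, so $l_r(E|F)/(1+|l_r(E|F)|)$ always has a limit. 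The main obstacle is the conditional nature of the input: outside very special $F$ neither semisimplicity of $\star_\tau$ nor Gamma Conjecture~II is known, and if $\star_\tau$ is non-semisimple there is no canonical exceptional basis to tie $\sigma_r$ to. A secondary difficulty is the non-generic scenario where several $a_j$ coincide, in which case distinct $E_j$ end up in the same $\sim^{\mathrm{inf}}$-class; identifying the SOD then requires sub-leading asymptotics of $Z^\tau$ and a finer Stokes analysis beyond Proposition~\ref{prop2.5.1_GGI}.
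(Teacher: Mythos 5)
Your overall strategy (assume semisimplicity and a Gamma~II-type input at $\tau$, read off the asymptotics of $Z^\tau(E_j)$ as in Proposition~\ref{prop_extimated_Ztau}, and feed the resulting phases into Theorem~\ref{thm_stabcond_fromexc} to get an algebraic path whose quasi-convergence is checked directly) is indeed the skeleton of the paper's argument. But there is a genuine gap at the step where you assert that ``after any necessary mutations the exceptional collection can be ordered so that $b_0>b_1>\cdots>b_n$'' and that this compatibility ``is part of the content of Gamma Conjecture~II.'' It is not. Gamma Conjecture~II attaches the collection to an admissible direction via the dominance order of the exponentials $e^{-u_j/z}$ along that direction, not via the imaginary parts of the $u_j$, and you are not free to reorder: since the central charge is forced to be exactly $Z^\tau$ (no rotations $e^{\ir\beta_j}$, unlike in Theorem~\ref{thm_gammaII_proposalIII}), the semiorthogonality order of the collection must already agree with the order of $\Im(-u_j)$, by Corollary~\ref{cor_increasing}. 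The paper's Corollary~\ref{cor_neg_conj_pr} is precisely a counterexample to your claim: for $\pr^{N-1}$, the collection obtained from the standard Beilinson foundation $\{\o,\dots,\o(N-1)\}$ by the Gamma~II mutations at $\tau$ near $0$ has the wrong order, and consequently \emph{no} quasi-convergent path with central charge $Z^\tau$ can have those objects limit semistable. The missing idea, which is the actual new ingredient in the proof of Theorem~\ref{thm_conj_pr}, is the choice of a different helix foundation, namely $\{\o(\lceil N/2\rceil),\dots,\o(\lceil N/2\rceil+N-1)\}$ as in Remark~\ref{rmk_shifted_beil_exc_collection}, for which the eigenvalue configuration does make $E_j\mapsto \Im(-u_j(\tau))$ increasing, together with a bookkeeping of the mutations as $\tau$ moves across $\H^2$. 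Also note that a ``generic admissible direction'' cannot make the $b_j$ distinct: the $u_j(\tau)$ depend only on $\tau$, and varying the admissible phase changes which collection Gamma~II produces, not the eigenvalues.

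A second, smaller issue is that the case of coinciding imaginary parts is part of the conjecture (it holds for arbitrary $\tau\in\H^2$), and your proposal explicitly leaves it open, suggesting it needs sub-leading asymptotics and a finer Stokes analysis. The paper avoids this: it deforms $\tau$ along a path where the imaginary parts are distinct, uses that the support constant of the algebraic stability conditions depends only on the masses (Remark~\ref{rmk_alg_stab}) to get a uniform quadratic form, and extends the path to the degenerate $\tau$ by the deformation theorem; the limit components of the SOD are then generated by the $E_j$ sharing the same $\Im(-u_j)$. Finally, a technical remark on your Step two: you cannot absorb $r$-dependent jumps of the integer shifts $p_j(r)$ by the $\Tilde{\GL}_2^+(\R)$-action while keeping the central charge equal to $Z^\tau$ (the action changes $Z$, and the deck transformation shifts all phases simultaneously); once the ordering issue above is settled this is harmless, because the divergence of the phase gaps makes any fixed choice of lifts work for all small $r$, but as written the continuity argument is not correct.
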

In the case of the plane $\pr^2$ we prove that the SOD induced by a quasi-convergent path as in Conjecture~\ref{conj_sigma_from_qcoh} is $\{\o,\o(1),\o(2))\}$, see Theorem~\ref{thm_conj_P2}.

It is conjectured that there are quasi-convergent paths that start in the geometric part of the stability manifold and end in the algebraic part.
If we had a good description of such paths we could start from a geometric stability condition and classify the semiorthogonal decompositions whose components admit stability conditions.
In \cite[Remark 14]{DHL23noncomm}{} it is asked whether quasi-convergent paths arising in Proposal~\ref{proposalIII_DHL}  start in the geometric part and end in the algebraic part.
In Theorem~\ref{thm_proposalIII_proj} and Corollary~\ref{cor_path_starts_in_geom_proj} we construct 
quasi-convergent paths in the stability manifolds of projective spaces that satisfies such properties, so we answer positively the question in \cite[Remark 14]{DHL23noncomm}{}.

Halpern-Leistner shows that Proposal~\ref{proposalIII_DHL} and the Gamma Conjecture II (see Conjecture~\ref{conj_gammaII}) are essentially equivalent, see \cite[Proposition 25]{DHL23noncomm}.
So in particular Proposal~\ref{proposalIII_DHL} should be true for all Grassmannians, up to some technical verifications.
Our Theorem~\ref{thm_gammaII_proposalIII} is a refined version of \cite[Proposition 25]{DHL23noncomm}{}.

\begin{rmk}
    Note that $Z^{\tau}$ in~\eqref{eq_def_Ztau} is the quantum cohomology central charge $Z$ as defined by Iritani in \cite{Iritani_2009}.
Galkin, Golyshev and Iritani observe that the Gamma Conjectures are  independent of the choice of $\tau\in B$, see \cite[Remark 4.6.3]{Galkin_2016}{} and hence they consider $Z^0=Z$ only, see ~\eqref{eq_quantum_central_charge}.
In our situation the choice of $\tau$ is  important indeed  the asymptotics of the central charges $Z^{\tau}$ depends on $\tau$, see Proposition~\ref{prop_extimated_Ztau}.
\end{rmk}

\subsection{General results}\label{paragraph_general_results}
In this section we start by analyzing the following general situation: we assume that there is a quasi-convergent path and a full exceptional collection of limit semistable objects.
We prove some simple necessary conditions on the limit SOD and on the phases of the exceptional objects, see Proposition~\ref{prop_exc_limitsemist_obj} and Corollary~\ref{cor_increasing}.

In the second part of the section we describe the asymptotic behaviour of the generalized quantum cohomology central charge in terms of the eigenvalues of the Euler vector field, see Proposition~\ref{prop_extimated_Ztau}.
At the end of the section we show how to construct quasi-convergent paths applying Theorem~\ref{thm_stabcond_fromexc}, see  Theorem~\ref{thm_gammaII_proposalIII} and \cite[Proposition 25]{DHL23noncomm}{}.
\subsubsection{SODs given by quasi-convergent paths and exceptional collections}
Let $\D$ be the bounded derived category of coherent sheaves of a smooth projective variety with a full exceptional collection $\langle E_0,\dots, E_d\rangle=\D$.
Consider $\sigma_r=(Z_r,\slicing_r)$, $ r\in (0,\delta)$ a quasi-convergent path (for $r \to 0^+$) that gives a SOD $\langle \D_1,\dots, \D_m\rangle=\D$ in the sense of Proposition~\ref{Proposition_2.20_HLJR}.
We will use the notation $\D_h<\D_{h'}$ to mean $h<h'$.
Let us suppose to know that  $\ln(Z_r(E_j))\sim -u_j/r $ as $r\to 0^+$, for $u_j\in \C^*$.
To compare the order of the components of the SOD and the asymptotics we need to define the lexicographic order on $\{-u_0,\dots,-u_d\}$ in the following way: for $u,v\in \{-u_j\}$
\begin{equation}\label{eq_def_lex_order}
        u\leq v \text{ if }
        \begin{cases}
            \Re(u)<\Re(v) &\text{ or }\\
            \Re(u)=\Re(v) &\text{ and } \Im(u)\leq \Im(v).
        \end{cases}
\end{equation}

\begin{lem}\label{lem_3}
    In the notation above there is an increasing bijection
    \begin{equation*}
        \gimel: \{\D_1, \dots\D_m\}\to \{\Im(-u_j)\}
    \end{equation*}
in particular $m$ is the number of distinct imaginary parts of the complex numbers in $\{-u_j\}$.
\end{lem}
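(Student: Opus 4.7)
The plan is to translate the asymptotics of the central charge into asymptotics of the average phase $\phi_r(E_j)$, and then read off both the equivalence relation $\sim^{\inf}$ and the order $<^{\inf}$ directly from the imaginary parts $\Im(-u_j)$. Assuming (from Proposition~\ref{prop_exc_limitsemist_obj}, which I take as given from earlier in the section) that every $E_j$ is limit semistable, $\phi_r(E_j)$ is a well-defined real number and $Z_r(E_j)=|Z_r(E_j)|\exp(\ir \pi \phi_r(E_j))$; comparing imaginary parts in the hypothesis $\ln Z_r(E_j) \sim -u_j/r$ then yields
\[
\phi_r(E_j) = \frac{\Im(-u_j)}{\pi r} + o(1/r), \qquad r \to 0^+.
\]

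Substituting this asymptotic into the definitions of $\sim^{\inf}$ and $<^{\inf}$ (with the limiting direction reversed to $r \to 0^+$), the difference $\phi_r(E_i)-\phi_r(E_j)$ equals $(\Im(-u_i)-\Im(-u_j))/(\pi r) + o(1/r)$. Hence $E_i \sim^{\inf} E_j$ iff $\Im(-u_i)=\Im(-u_j)$, while $E_i <^{\inf} E_j$ iff $\Im(-u_i)<\Im(-u_j)$. This already produces a natural order-preserving bijection between the set of $\sim^{\inf}$-equivalence classes of $\{E_0,\dots,E_d\}$ and the set $\{\Im(-u_j)\}$.

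Next I identify the SOD components. Each equivalence class of some $E_j$ gives a subcategory $\D^{E_j}$ as in Proposition~\ref{Proposition_2.20_HLJR}. Since $\{E_0,\dots,E_d\}$ already generates $\D$, the family $\{\D^{E_j}\}$ taken over distinct classes spans $\D$, so by the uniqueness of the SOD attached to the quasi-convergent path the list $\{\D_1,\dots,\D_m\}$ coincides with $\{\D^{E_j}\}$. In particular $m$ equals the number of distinct imaginary parts in $\{-u_j\}$, and sending $\D^{E_j}$ to $\Im(-u_j)$ defines a well-defined, order-preserving bijection $\gimel$, as required.

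The main obstacle is the passage from log-asymptotics to phase-asymptotics: the phase $\phi_r(E_j)$ is only pinned down by $\arg Z_r(E_j)$ up to an even integer, fixed by continuity along the path, so one must verify that this integer correction cannot contribute a term comparable to $1/r$ as $r\to 0^+$ (otherwise it could swap $\sim^{\inf}$ with $<^{\inf}$). Once this subdominance is in hand, the remainder of the argument is a direct manipulation of the definitions of $\sim^{\inf}$, $<^{\inf}$, and of the SOD of Proposition~\ref{Proposition_2.20_HLJR}.
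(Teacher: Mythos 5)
Your argument rests on the assumption that every $E_j$ is limit semistable, and this is exactly where it breaks down. That assumption is not among the hypotheses of the lemma: the setup only asks for a full exceptional collection, a quasi-convergent path, and the asymptotics $\ln Z_r(E_j)\sim -u_j/r$. Proposition~\ref{prop_exc_limitsemist_obj}, which you invoke as "given from earlier", in fact comes \emph{after} this lemma, is itself \emph{conditional} on the $E_j$ being limit semistable (it does not assert it), and its proof uses the bijection $\gimel$ you are trying to construct, so the appeal is circular. Worse, the hypothesis can genuinely fail in the situations the lemma is designed for: Corollary~\ref{cor_neg_conj_pr} exhibits central charges with the stated asymptotics for which the exceptional objects cannot all be limit semistable for any quasi-convergent path. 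So your computation of $\phi_r(E_j)$, and with it the identification of $\sim^{\text{inf}}$-classes and of the order with the imaginary parts $\Im(-u_j)$, only establishes the lemma in a special case, not in the generality claimed.

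The way around this, which is the route the paper takes, is to work with arbitrary limit semistable objects rather than with the $E_j$ themselves. For $D\in\D_h$ limit semistable, filter $D$ by the exceptional collection to write $Z_r(D)=\sum_j a_j Z_r(E_j)$ with $a_j\in\Z$, and let $\gimel'(D)$ be the lexicographically largest $-u_j$ with $a_j\neq 0$; the asymptotics of the $Z_r(E_j)$ then force $\phi_r(D)\sim \Im(\gimel'(D))/(\pi r)$. Setting $\gimel(\D_h):=\Im(\gimel'(D))$ is well defined because two limit semistable objects in the same component have bounded phase difference, and $\gimel$ is strictly increasing because objects in different components have phase difference tending to $+\infty$; surjectivity comes from running the same dominant-term argument on the limit semistable filtration of each $E_j$ inside the SOD $\langle\D_1,\dots,\D_m\rangle$. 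Your closing remark about pinning down the phase (it is determined by the slicing and varies continuously, so the ambiguity is a bounded, hence $o(1/r)$, correction) is a fair technical point, but it does not repair the main gap.
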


\begin{proof}
    The first step consists in defining $\gimel$.

    For a limit semistable object  $D\in \D_h$ we consider its filtration with respect to the SOD $\langle E_0, \dots,E_d\rangle$, this permits to express
    $Z(D)=\sum a_jZ(E_j),a_j\in \Z$, we define $\gimel'(D)$ to be the biggest $-u_j$ in the lexicographic order~\eqref{eq_def_lex_order} with $a_j\neq 0$.
    Note that the imaginary part of $\gimel'(D)$ gives the asymptotic of the phase of $D$ 
    \begin{equation*}
        \phi_t(D)\sim \frac{\Im(\gimel'(D))}{\pi r}.
    \end{equation*}
    We then define $\gimel(\D_h):=\Im(\gimel'(D))$, the map $\gimel$ is well defined since if $D'\in \D_h$ is an other limit semistable object then $\liminf_{r\to 0^+}\phi_r(D')-\phi_r(D)$ is finite so $\Im(\gimel'(D))=\Im(\gimel'(D'))$.

    If $D\in \D_h$ and $D'\in \D_{h'}$ are limit semistable objects with $h<h'$ then we have $\liminf_{r\to 0^+} \phi(D')-\phi(D)=+\infty$ so $\gimel(D)<\gimel(D')$. This shows that $\gimel$ is strictly increasing, so in particular it is injective.
    Surjectivity of $\gimel$ follows from the following remark:
    the limit semistable filtration of $E_j$ with factors $D_h\in \D_h$ gives 
    \begin{equation*}
        Z(E_j)=\sum_{h=1}^{m} a_hZ(D_h),\qquad a_h\in \{0,1\}.
    \end{equation*}
    So by defining $-u:=\max\{\gimel'(D_h)|a_h\neq 0\}$ we get that $-u=-u_j$
    and $\Im(-u_j)=\gimel(\D_{h'})$ for an index $h'\in \{1,\dots,m\}$ that gives  $-u$.
    We conclude that $\gimel(\D_{h'})=\Im(-u_j)$.
\end{proof}

Given an object $D\in\D$, we denote by $D_{i_{E_j}}\in \D_{i_{E_j}}$ (resp. $D_{f_{E_j}}\in \D_{f_{E_j}}$) the non zero factor with smallest (resp. largest) index in the filtration of $E_j$ given by the SOD $\langle \D_1,\dots, \D_m\rangle=\D$, see Definition~\ref{def_SOD}.
More explicitely we have
\begin{equation*}
    \xymatrix{
    0 \ar[r]& \bullet \ar[r]\ar[d]&\bullet \ar[r]\ar[d] &\cdots \ar[r]& \bullet\ar[r]&E_j\ar[d]\\
               &D_{f_{E_j}} \ar@{-->}[ul]         &D_{\bullet}\ar@{-->}[ul]  &        &  & D_{i_{E_j}}\ar@{-->}[ul] 
    }
\end{equation*}
where all the factors $D_{\bullet}\in \D_{\bullet}$ are assumed to be non zero.
Let us define the map
\begin{equation*}
    \begin{split}
        \F : \{E_0,\dots, E_d\}&\to \{\D_1,\dots,\D_m\}\\
        E_j&\mapsto \D_{f_{E_j}}.
    \end{split}
\end{equation*}

\begin{lem}\label{lem_4}
    If $\Hom^{\bullet}(E_j, E_h)\neq 0$ then $f_{E_h}\geq i_{E_j}$.
\end{lem}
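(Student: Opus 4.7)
The plan is to prove the contrapositive: assume $f_{E_h} < i_{E_j}$ and deduce $\Hom^{\bullet}(E_j, E_h) = 0$. The guiding idea is that under this assumption, every SOD-factor of $E_j$ lives in a strictly higher-indexed component than every SOD-factor of $E_h$, and the semiorthogonality built into Definition~\ref{def_SOD}(1) then forces the total $\Hom^{\bullet}$ to vanish.

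Concretely, I would fix the SOD filtrations
\[
0 = A_0 \to A_1 \to \cdots \to A_n = E_j, \qquad 0 = B_0 \to B_1 \to \cdots \to B_p = E_h.
\]
By the very definition of $i_{E_j}, f_{E_j}, i_{E_h}, f_{E_h}$, each nonzero cone $F_q$ of the first filtration lies in some $\D_k$ with $i_{E_j} \leq k \leq f_{E_j}$, and each nonzero cone $G_r$ of the second lies in some $\D_{k'}$ with $i_{E_h} \leq k' \leq f_{E_h}$. The assumption $f_{E_h} < i_{E_j}$ forces $k > k'$ for every such pair, and Definition~\ref{def_SOD}(1) yields $\Hom^{\bullet}(F_q, G_r) = 0$ for all $q, r$. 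A standard two-step induction then propagates this to the full objects: applying $\Hom^{\bullet}(F_q, -)$ to the triangles $B_{r-1} \to B_r \to G_r \to B_{r-1}[1]$ gives $\Hom^{\bullet}(F_q, E_h) = 0$ by induction on $r$, and a second induction via $\Hom^{\bullet}(-, E_h)$ applied to the triangles $A_{q-1} \to A_q \to F_q \to A_{q-1}[1]$ delivers $\Hom^{\bullet}(E_j, E_h) = 0$, contradicting the hypothesis.

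The argument is entirely formal, so there is no substantive obstacle. The only thing requiring care is the bookkeeping of the SOD indexing conventions: Definition~\ref{def_SOD}(1) makes $\Hom^{\bullet}$ from a higher-indexed component to a lower-indexed one vanish, while $i_\bullet$ denotes the smallest SOD index appearing in the filtration (the top of the filtration, closest to the object) and $f_\bullet$ the largest (the bottom, closest to zero). Aligning these conventions is what makes the hypothesis $f_{E_h} < i_{E_j}$ produce semiorthogonality in the right direction.
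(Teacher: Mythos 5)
Your proof is correct and follows essentially the same route as the paper: the paper's argument is exactly the contrapositive via semiorthogonality of the SOD components, with the d\'evissage through the two filtrations compressed into the phrase ``standard triangulated categories reasoning,'' which you have simply spelled out. Your bookkeeping of the conventions for $i_\bullet$, $f_\bullet$ and Definition~\ref{def_SOD}(1) matches the paper's, so no gap remains.
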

\begin{proof}
    Let us assume that $f_{E_h}< i_{E_j}$ then standard triangulated categories reasoning shows that $\Hom(E_j,E_h)=0$.
    Since the condition $f_{E_h}< i_{E_j}$ is invariant under shifts we get that $\Hom^{\bullet}(E_j, E_h)= 0$.
\end{proof}

\begin{lem}
    In the notation above if $E_j, E_h$  satisfy $\Hom^{\bullet}(E_j, E_h)\neq 0$ and $E_j, E_h$ are both limit semistable then $\Im(-u_j)\leq \Im(-u_h)$.
\end{lem}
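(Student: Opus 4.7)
The plan is to combine Lemma~\ref{lem_3}, which encodes the components of the SOD via the imaginary parts of $\{-u_j\}$, with Lemma~\ref{lem_4}, which translates the nonvanishing of $\Hom^{\bullet}$ into a comparison of indices in the SOD. The point is that a limit semistable exceptional object has a trivial limit semistable filtration (one nonzero factor equal to itself), so the indices $i_{E_j}$ and $f_{E_j}$ coincide and pick out the unique component of the SOD containing $E_j$.

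More precisely, since $E_j$ is limit semistable, there is a unique $h_j \in \{1,\dots,m\}$ such that $E_j \in \D_{h_j}$, and similarly $E_h \in \D_{h_h}$ for some $h_h$. For such an object the filtration with respect to the SOD is concentrated in a single degree, so $i_{E_j}=f_{E_j}=h_j$ and $i_{E_h}=f_{E_h}=h_h$. Applying Lemma~\ref{lem_4} to the hypothesis $\Hom^{\bullet}(E_j,E_h)\neq 0$ then yields
\begin{equation*}
    h_h \;=\; f_{E_h} \;\geq\; i_{E_j} \;=\; h_j.
\end{equation*}

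Next, I would invoke Lemma~\ref{lem_3} to turn this inequality of SOD indices into one of imaginary parts. Because $E_j$ is itself limit semistable, the auxiliary invariant $\gimel'(E_j)$ from the proof of Lemma~\ref{lem_3} can be read off directly from the asymptotic $\ln Z_r(E_j)\sim -u_j/r$, giving $\phi_r(E_j) \sim \Im(-u_j)/(\pi r)$ and hence $\Im(\gimel'(E_j))=\Im(-u_j)$; thus $\gimel(\D_{h_j})=\Im(-u_j)$, and similarly $\gimel(\D_{h_h})=\Im(-u_h)$. Since $\gimel$ is strictly increasing and $h_j \leq h_h$, we conclude
\begin{equation*}
    \Im(-u_j) \;=\; \gimel(\D_{h_j}) \;\leq\; \gimel(\D_{h_h}) \;=\; \Im(-u_h),
\end{equation*}
which is the desired inequality.

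The statement is essentially a bookkeeping consequence of the two preceding lemmas, so there is no serious obstacle; the one point that requires care is verifying that $\gimel'(E_j)=-u_j$ (rather than some other $-u_i$) when $E_j$ is limit semistable. This follows because any limit semistable object has a well-defined asymptotic phase determined by $\ln |Z_r|$ and the phase $\phi_r$, and the limit semistable hypothesis forces this asymptotic to coincide with the one extracted from $-u_j/r$; everything else is then automatic from the monotonicity of $\gimel$.
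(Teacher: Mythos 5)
Your proof is correct and is essentially the paper's argument run in the direct (contrapositive) direction: the paper assumes $\Im(-u_j)>\Im(-u_h)$ and uses strict monotonicity of $\gimel$ together with $i_{E_j}=f_{E_j}$ and Lemma~\ref{lem_4} to contradict $\Hom^{\bullet}(E_j,E_h)\neq 0$, while you apply Lemma~\ref{lem_4} first and then monotonicity of $\gimel$. The only cosmetic difference is that you identify $\Im(\gimel'(E_j))=\Im(-u_j)$ via the phase asymptotics, whereas it also follows immediately from the trivial SOD filtration of a limit semistable $E_j$; both are fine.
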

\begin{proof}
    Let us suppose $\Im(-u_j)> \Im(-u_h)$, since $E_j,E_h$ are semistable we have that $\Im(-u_i)=\gimel\circ \F(E_i)$ for $i=j,h$.
    Moreover $\gimel$ is strictly increasing so $\F(E_h)<\F(E_j)$ i.e., $f_{E_h}<f_{E_j}$.
    Since $E_j$ is limit semistable $i_{E_j}=f_{E_j}$ and we have that  $f_{E_h}< i_{E_j}$.
    By Lemma~\ref{lem_4} above, we conclude that  $\Hom^{\bullet}(E_j, E_h)=0$, a contradiction.
\end{proof}

\begin{prop}\label{prop_exc_limitsemist_obj}
    In the notation above if we assume that the exceptional objects $\{E_j\}$ are all limit semistable and that $\Hom^{\bullet}(E_j,E_{j+1})=0$, for $j=0,\dots, d-1$, then 
    \begin{equation*}
        \D_h=\langle E_j|\Im(-u_j)=\gimel(\D_h)\rangle
    \end{equation*}
    for $h=1, \dots, m$.
\end{prop}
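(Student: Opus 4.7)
The plan is to realize the decomposition on the right-hand side as an SOD in its own right and then compare it with $\langle\D_1,\dots,\D_m\rangle$ using the compatibility with the order induced by $\gimel$.

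First, for each $h=1,\dots,m$ I would set
\begin{equation*}
    \A_h:=\langle E_j\mid \Im(-u_j)=\gimel(\D_h)\rangle
\end{equation*}
and immediately observe the easy inclusion $\A_h\subseteq\D_h$: since every $E_j$ is limit semistable, its limit semistable HN filtration is trivial, so $E_j\in\D_{\F(E_j)}$, and the identity $\gimel\circ\F(E_j)=\Im(-u_j)$ (established just before the proposition) forces $E_j\in\D_h$ exactly when $\Im(-u_j)=\gimel(\D_h)$. The partition $\{E_0,\dots,E_d\}=\bigsqcup_h\{E_j:\Im(-u_j)=\gimel(\D_h)\}$ therefore distributes the full exceptional collection among the $\A_h$.

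Next I would verify that $\langle\A_1,\dots,\A_m\rangle$ is itself a semiorthogonal decomposition of $\D$. Fullness is automatic because the union of the generating exceptional objects is the full collection $\{E_j\}$, which generates $\D$. For semiorthogonality I take $h<h'$, $E_j\in\A_{h'}$ and $E_k\in\A_h$; since $\gimel$ is strictly increasing (by the lemma defining it), $\Im(-u_j)=\gimel(\D_{h'})>\gimel(\D_h)=\Im(-u_k)$, and then the contrapositive of the preceding lemma (which used limit semistability of the exceptional objects in an essential way) gives $\Hom^{\bullet}(E_j,E_k)=0$. This is the point where the hypothesis that all $E_j$ are limit semistable is really used.

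Finally I would promote the inclusions $\A_h\subseteq\D_h$ to equalities by a uniqueness-of-SOD argument. Given $D\in\D_h$, apply the $\A$-SOD to obtain a filtration with factors $A_1,\dots,A_m$, where $A_k\in\A_k\subseteq\D_k$. But the $\D$-SOD filtration of the same object $D\in\D_h$ has $D$ as its $h$-th factor and all other factors zero, and by uniqueness of SOD filtrations the two filtrations must coincide. Thus $A_h=D\in\A_h$ and $A_k=0$ for $k\neq h$, giving $\D_h\subseteq\A_h$ and hence $\D_h=\A_h$. The potential pitfall, and where I would be most careful, is in checking that $\gimel$ being strictly increasing translates correctly into the SOD order convention used here (the order convention $\D_h<\D_{h'}\Leftrightarrow h<h'$), since this is what lets the semiorthogonality of $\{\A_h\}$ line up with the semiorthogonality of $\{\D_h\}$ and makes the uniqueness comparison meaningful.
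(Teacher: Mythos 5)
Your proof is correct, but it takes a genuinely different route from the paper's. Both arguments begin with the same easy inclusion $\langle E_j\mid \Im(-u_j)=\gimel(\D_h)\rangle\subseteq\D_h$, obtained because each limit semistable $E_j$ lies in a single component whose $\gimel$-value is $\Im(-u_j)$. From there the paper passes to the Grothendieck group: it compares the lattices $\ch(\langle E_j\mid \Im(-u_j)=\gimel(\D_h)\rangle)\subseteq \ch(\D_h)$, which have the same rank, and then excludes factors $E_j\notin\D_h$ from the $\langle E_0,\dots,E_d\rangle$-filtration of a limit semistable object of $\D_h$. You instead stay entirely at the categorical level: the Hom-vanishing lemma preceding the proposition (which is precisely where limit semistability of all $E_j$ enters, as you note) makes the subcategories $\A_h$ semiorthogonal in the order dictated by $\gimel$, so $\langle \A_1,\dots,\A_m\rangle$ is a second SOD with $\A_h\subseteq\D_h$, and uniqueness of SOD filtrations then forces $\D_h\subseteq\A_h$, since an $\A$-filtration of $D\in\D_h$ is in particular a $\D$-filtration. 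This buys you something: no K-theory or rank count is needed, you sidestep the delicate point in the paper's argument that a nonzero filtration factor could a priori have vanishing class, and your conclusion applies directly to every object of $\D_h$ rather than only to the limit semistable ones (the paper implicitly uses that these generate $\D_h$). The one step you leave implicit is condition (2) of Definition~\ref{def_SOD} for $\langle\A_1,\dots,\A_m\rangle$: fullness plus semiorthogonality alone does not formally give the filtrations. It does here because each $\A_h$ is generated by a subcollection of the full exceptional collection, hence is admissible; equivalently, the same Hom-vanishing lemma lets you reorder $\{E_0,\dots,E_d\}$ into a full exceptional collection in which the generators of $\A_1,\dots,\A_m$ occur as consecutive blocks, and block decompositions of a full exceptional collection are SODs. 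With that one sentence added, and with your (correct) check that increasing $\gimel$ matches the order convention of Definition~\ref{def_SOD} and of $\langle\D_1,\dots,\D_m\rangle$, the argument is complete.
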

\begin{proof}
    Since $E_j$ is limit semistable we have $E_j\in \D_h$ for some 
     $h\in\{1, \dots, m\}$; $\gimel$ is a bijection so $h$ is uniquely determined by the condition
    $\gimel(\D_h)=\Im(-u_j)$.
    Thus we have the following inclusion 
    \[
    \langle E_j|\Im(-u_j)=\gimel(\D_h)\rangle\subseteq\D_h.
    \]

    To prove the equalities we observe that if $E_j\in \D_h$ and $E_i\in \D_{h+1}$ then $j<i$, for $h=0, \dots,m-1$.
    Hence the SOD $\langle E_0,\dots, E_d\rangle$ is a refinement of the SOD $\langle \D_0, \dots, \D_m\rangle$. 
    This implies that the filtration of any $E\in \D_h$ given by the SOD $\langle E_0,\dots, E_d\rangle$ have as factors only $E_j\in \D_h$.
\end{proof}

\begin{cor}\label{cor_increasing}
    In the notation above we assume that $\{E_j\}$ are limit semistable and that 
    for $h=1, \dots, m-1$ and $E_j\in\D_h, E_i\in\D_{h+1}$ we have $\Hom^{\bullet}(E_j,E_i)\neq 0$.
    Then the map
    \begin{equation*}
    \begin{split}
        \gimel\circ \F : \{E_0,\dots, E_d\}&\to \{ \Im(-u_i)\}\\
        E_j&\mapsto \Im(-u_j)
    \end{split}
    \end{equation*}
    is increasing.
\end{cor}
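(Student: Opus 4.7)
The plan is to reduce the claim to showing that the function $j \mapsto h(j)$ is non-decreasing, where $h(j) \in \{1,\dots,m\}$ is the unique index with $E_j \in \D_{h(j)}$ (well-defined because every $E_j$ is limit semistable, cf.\ Proposition~\ref{prop_exc_limitsemist_obj}). Once this is known, the conclusion is immediate: $\F(E_j) = \D_{h(j)}$ by the very definition of $\F$, $\gimel \circ \F(E_j) = \gimel(\D_{h(j)}) = \Im(-u_j)$ by Lemma~\ref{lem_3}, and $\gimel$ is strictly increasing on the set of SOD components.

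The essential step will be the following observation: whenever $E_j \in \D_h$ and $E_i \in \D_{h+1}$, necessarily $j < i$. The hypothesis gives $\Hom^{\bullet}(E_j, E_i) \neq 0$; because $E_j$ and $E_i$ sit in different SOD components they are distinct objects, so $j \neq i$; and since $\{E_0, \dots, E_d\}$ is exceptional, the vanishing $\Hom^{\bullet}(E_a, E_b) = 0$ for $a > b$ then rules out $j > i$. A straightforward induction on $h' - h$ will promote this to the statement that for every pair $h < h'$, every index appearing in $\D_h$ is strictly smaller than every index appearing in $\D_{h'}$.

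Non-decreasingness of $j \mapsto h(j)$ will then follow by contradiction: if some $j < j'$ satisfied $h(j) > h(j')$, then the preceding statement applied with $h = h(j')$ and $h' = h(j)$ would force $j' < j$, a contradiction. I do not anticipate a serious obstacle here; the argument is essentially formal once one recognises that the assumed $\Hom^{\bullet}$-nonvanishing between consecutive components, combined with the exceptional-collection axiom, forces the SOD order to align with the order of the exceptional-collection indices. The only point requiring a small bit of care is the verification that $E_j \neq E_i$, which is what upgrades the inequality $j \leq i$ coming from the exceptional-collection axiom to the strict inequality $j < i$ needed to run the induction.
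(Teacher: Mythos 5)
Your argument is correct and is essentially the deduction the paper intends (the corollary is stated there without an explicit proof): you combine the strict monotonicity of $\gimel$ from Lemma~\ref{lem_3} with the exceptional-collection vanishing $\Hom^{\bullet}(E_a,E_b)=0$ for $a>b$, exactly in the spirit of Lemma~\ref{lem_4}. The only implicit point in your induction is that each intermediate component $\D_{h''}$ contains at least one object of the collection $\{E_j\}$, which is guaranteed by Proposition~\ref{prop_exc_limitsemist_obj} together with $\D_{h''}\neq 0$, so the chaining of strict inequalities goes through.
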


\subsubsection{General construction of quasi-convergent paths}
In order to prove Conjecture~\ref{conj_sigma_from_qcoh} for projective spaces we will apply the following two results.
\begin{prop}\label{prop_extimated_Ztau}
    Let $F$ be a Fano variety for which Gamma Conjecture II is true at $\star_{\tau_0}$ and let us denote by $\{E_j\}$ the associated full exceptional collection.  
    Let us denote by $B^0\subseteq \H^{\bullet}(F,\C)$ the connected component containing $\tau_0$ of the region $B$ where $\star$ is defined.
    Fix $\tau\in B^0$ and assume that $\int_{F}\Psi_j^{\tau}\neq 0$, where $\{\Psi_j^{\tau}\}$ is a basis of idempotents of $\star_{\tau}$, see Proposition~\ref{prop2.5.1_GGI}.
    Then  the generalized quantum cohomology central charge has the following asymptotics
    \begin{equation}\label{eq_5}
        \ln{Z^{\tau}(E_j)}\sim \frac{\mathrm{dim}(F)}{2} \ln{(2\pi z)}-u_j(\tau)/z\qquad \text{ as } z\to 0 \text{ in the sector } |\mathrm{arg}(z)-\phi|<\frac{\pi}{2}+\epsilon.
    \end{equation}
\end{prop}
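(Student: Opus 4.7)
The plan is to combine Gamma Conjecture II at $\tau_0$ with the asymptotic behaviour supplied by Proposition~\ref{prop2.5.1_GGI}, and then propagate the identification over $B^0$ by $\Tilde{\nabla}$-parallel transport. By hypothesis, at $\tau_0$ we already have $\Phi^{\tau_0}(\hat{\Gamma}\cdot\Ch(E_j))=y_j(z,\tau_0)$, the $j$-th column of the asymptotically exponential fundamental solution $Y_{\tau_0}(z)$. Both the canonical fundamental solution of Remark~\ref{rmk_can_sol_fortau_non_0} and the solution of Proposition~\ref{prop2.5.1_GGI} define holomorphic families of $\Tilde{\nabla}$-flat frames on a neighbourhood of $\tau_0$ in $B$.

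Since both families are $\Tilde{\nabla}$-flat and agree on the slice $\{\tau_0\}\times(\pr^1\setminus\{0\})$, flatness forces them to agree on their common domain. Because semisimplicity persists on the whole connected component $B^0$ and the chosen admissible phase $\phi$ remains admissible there, $Y_{\tau}(z)$ extends holomorphically over $B^0$, yielding the identity $\Phi^{\tau}(\hat{\Gamma}\cdot\Ch(E_j))=y_j(z,\tau)$ for every $\tau\in B^0$.

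Applying the defining asymptotic $Y_{\tau}(z)e^{U/z}\to \Psi^{\tau}$ column by column gives
\begin{equation*}
y_j(z,\tau)\sim e^{-u_j(\tau)/z}\,\Psi_j^{\tau}\qquad\text{as }z\to 0 \text{ in }|\mathrm{arg}(z)-\phi|<\tfrac{\pi}{2}+\epsilon.
\end{equation*}
Integrating over $F$ and invoking the hypothesis $\int_F \Psi_j^{\tau}\neq 0$ one obtains
\begin{equation*}
Z^{\tau}(E_j)=(2\pi z)^{\dim(F)/2}\int_F \Phi^{\tau}(\hat{\Gamma}\cdot\Ch(E_j))\sim (2\pi z)^{\dim(F)/2}\,e^{-u_j(\tau)/z}\int_F \Psi_j^{\tau},
\end{equation*}
and taking the principal logarithm produces~\eqref{eq_5}.

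The main obstacle I foresee lies in the second step above: one must rigorously verify that the asymptotically exponential fundamental solution, which is \emph{a priori} a local object depending on a choice of admissible phase and on semisimplicity, propagates single-valuedly across the entire connected component $B^0$ without crossing Stokes rays where the set of admissible phases jumps or where eigenvalues of $\mathcal{E}\star_{\tau}$ collide. The non-vanishing hypothesis $\int_F \Psi_j^{\tau}\neq 0$ is equally essential, since otherwise the leading exponential in $Z^{\tau}(E_j)$ would cancel and a subdominant contribution from the expansion of $Y_{\tau}(z)e^{U/z}$ around $\Psi^{\tau}$ would dictate the true asymptotic, spoiling~\eqref{eq_5}.
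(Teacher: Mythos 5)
Your proposal is correct and follows essentially the same route as the paper: identify $\Phi^{\tau_0}(\hat{\Gamma}\cdot\Ch(E_j))$ with the columns of $Y_{\tau_0}(z)$ via Gamma Conjecture II, propagate the identification over $B^0$ by $\Tilde{\nabla}$-flatness and uniqueness of flat sections, then apply the asymptotics of Proposition~\ref{prop2.5.1_GGI} together with $\int_F\Psi_j^{\tau}\neq 0$ and take logarithms. The Stokes-ray/eigenvalue-collision caveat you flag is also left implicit in the paper's argument, so your treatment is on par with it.
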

\begin{proof}
    Let us denote by $y_j(\tau,z)$ the $j$-th column of the asymptotically exponential fundamental solution from Proposition~\ref{prop2.5.1_GGI}.
    Since by assumption we have that $\Phi^{\tau_0}(\hat{\Gamma}\Ch(E_j))=y_j(\tau_0,z)$ we get that  the function
    $$\phi_j(\tau,z):=\Phi^{\tau}(\hat{\Gamma}\Ch(E_j))-y_j(\tau,z)$$ 
    is a solution of $\Tilde{\nabla}$ and it vanishes on $(\tau_0,z)$: by unicity of the solution $\phi_j= 0$ is the zero solution on $B^0\times \C^*$.
    So we get that the columns of $Y_{\tau}(z)$ are $\Phi^{\tau}(\hat{\Gamma}\Ch(E_j))$ for every $\tau\in B^0$.
    By Proposition~\ref{prop2.5.1_GGI} we have 
    \begin{equation*}
    e^{u_j(\tau)/z}\int_{F}\Phi^{\tau}(\hat{\Gamma}\Ch(E_j))\to \int_{F}\Psi_j^{\tau}\qquad \text{ as } z\to 0 \text{ in the sector } |\mathrm{arg}(z)-\phi|<\frac{\pi}{2}+\epsilon.
    \end{equation*}
    Since $\int_{F}\Psi_j^{\tau}$ is not zero we take the logarithm on both sides and we get the claim.
\end{proof}
\begin{rmk}
    The assumption $\int_{F}\Psi_j^{\tau}\neq 0$ can be dropped if we are interested in the statement for $\tau$ in a small neighborhood of $0$. 
    Indeed the Gamma Conjecture II at $\tau=0$ with respect to the exceptional collection $\{E_j\}$ gives the asymptotics in~\eqref{eq_asym_quant_central_charge}, so in particular $\int_{F}\Psi_j^{0}\neq 0$ and the same remains true in a neighborhood of $0$.
    We will prove that the property $\int_{F}\Psi_j^{\tau}\neq 0$ is true for $F=\pr$ and any $\tau\in \H^2(\pr,\C)$.
\end{rmk}

The following theorem is a refined version of \cite[Proposition 25]{DHL23noncomm}{}.
\begin{thm}\label{thm_gammaII_proposalIII}
    Let $F$ be a Fano variety 
    and let $\{E_0, \dots,E_d\}$ be any full, strong exceptional collection of $\mathrm{D^b}(F)$. We assume that for a given $\tau\in B$ the asymptotics in~\eqref{eq_5} holds, i.e.,
    \begin{equation*}
        \ln{Z^{\tau}(E_j)}\sim \frac{\mathrm{dim}(F)}{2} \ln{(2\pi z)}-u_j(\tau)/z \qquad\text{ as } z\to 0 \text{ in the sector } |\mathrm{arg}(z)-\phi|<\frac{\pi}{2}+\epsilon.
    \end{equation*}
If the eigenvalues $u_j(\tau)$, $j=0, \dots, d$ have distinct imaginary parts then 
there exists a path $\sigma_r=(Z',\slicing_r)$, $r=z\in (0,\delta)$, for some $\delta>0$, that is quasi-convergent for $r\to 0^+$ and 
    with $Z'(\alpha)=\int_{F}\Phi'(\alpha)$ for $\Phi'$ a fundamental solution of $\Tilde{\nabla}_{|_{\tau}}$ and any $\alpha\in \H^{\bullet}(X,\C)$.
    
    Moreover each $E_j$ is $\sigma_r-$stable for any $r\in (0,\delta)$, the SOD given by $\sigma_{\bullet}$ is $\langle E_0,\dots,E_d\rangle$, and there is a $\mu<0$, that depends only on $\Im(-u_j(\tau))$, $j=0,\dots,d$, such that for $r\sim \delta$  we have $\phi_r(E_{j+1})-\phi_r(E_j)<1+\frac{\mu}{2}$.
\end{thm}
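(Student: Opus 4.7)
My plan is to apply Theorem~\ref{thm_stabcond_fromexc} to the exceptional collection with central charge $Z':=Z^{\tau}$ and masses $m_j(r):=|Z^{\tau}(E_j)|$, taking as phases continuous lifts $\phi_j(r)$ of $\arg Z^{\tau}(E_j)/\pi$ on $(0,\delta]$ for an appropriately chosen $\delta>0$. The hypothesized asymptotics split as
\[
\ln m_j(r) \sim \tfrac{\dim F}{2}\ln(2\pi r)-\Re(u_j(\tau))/r, \qquad \phi_j(r) \sim \tfrac{\Im(-u_j(\tau))}{\pi r},
\]
so after relabeling (if necessary) so that $\Im(-u_0)<\dots<\Im(-u_d)$, every consecutive difference $\phi_{j+1}(r)-\phi_j(r)$ tends to $+\infty$ as $r\to 0^+$; I take this monotonic labeling for granted, noting that Corollary~\ref{cor_increasing} forces it to agree with the exceptional order once the $E_j$ are identified as the limit semistable factors of the SOD.

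The first and most delicate step is choosing $\delta$. Writing $\beta_j(\delta):=\{\arg Z^{\tau}(E_j)/\pi\}\in[0,1)$ and working through the integer lifts of a phase (which live mod $2$), one checks that the pair of inequalities $\ceil{\phi_j(\delta)}<\phi_{j+1}(\delta)$ and $\phi_{j+1}(\delta)-\phi_j(\delta)<1$ can be arranged simultaneously for every $j$ if and only if $\beta_0(\delta)>\beta_1(\delta)>\dots>\beta_d(\delta)$. Since $\beta_j(\delta)\approx\{\Im(-u_j(\tau))/(\pi\delta)\}$ in the asymptotic regime, I would invoke Weyl equidistribution on $(\R/\Z)^{d+1}$ (or, when the $\Im(-u_j(\tau))$ are rationally dependent, an elementary density argument on the appropriate subtorus) to produce arbitrarily small $\delta$ with this interlacement. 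For such a $\delta$, the integer parts of the lifts can be chosen so that $\phi_j(\delta)=n_j-\epsilon_j$ with $n_0<n_1<\dots<n_d$ consecutive integers and $0<\epsilon_0<\dots<\epsilon_d<1$. Then $\ceil{\phi_j(\delta)}=n_j<\phi_{j+1}(\delta)$ and $\phi_{j+1}(\delta)-\phi_j(\delta)=1-(\epsilon_{j+1}-\epsilon_j)<1$, so setting $\mu:=-2\min_j(\epsilon_{j+1}-\epsilon_j)<0$ gives the announced bound. As $r$ decreases from $\delta$ the differences only grow, so $\ceil{\phi_j(r)}<\phi_{j+1}(r)$ persists on all of $(0,\delta]$, and Theorem~\ref{thm_stabcond_fromexc} furnishes the stability condition $\sigma_r=(Z^{\tau},\slicing_r)$ in which each $E_j$ is stable of phase $\phi_j(r)$; continuity of $\sigma_{\bullet}$ in $r$ is then automatic.

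To finish, I would verify quasi-convergence. Shrinking $\delta$ further so that $\phi_{j+1}(r)-\phi_j(r)>2$ on $(0,\delta)$, Proposition~\ref{prop_pure_alg_stabcond} implies that the only $\sigma_r$-stable objects are the shifts $E_j[n]$ and that the heart of $\sigma_r$ is semisimple on its simples; every limit semistable object therefore has all its Harder--Narasimhan factors equal to a single shift of a single $E_j$. For an arbitrary $D\in\D^b(F)$, the decomposition in the SOD $\langle E_0,\dots,E_d\rangle$ is a filtration by limit semistable factors whose asymptotic phases spread apart, establishing condition~(1) of Definition~\ref{def_quaiconpath}. For condition~(2), the asymptotic $l_r(E_j)\sim\tfrac{\dim F}{2}\ln(2\pi r)-u_j(\tau)/r$ gives $l_r(E|F)=(u_k-u_j)/r+O(1)$ for limit semistable $E,F$ equivalent to shifts of $E_j,E_k$; hence $\lim_{r\to 0^+}l_r(E|F)/(1+|l_r(E|F)|)$ exists, equal to $(u_k-u_j)/|u_k-u_j|$ when $j\neq k$ and to a manifestly bounded complex number when $j=k$. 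Lemma~\ref{Proposition_2.20_HLJR} then identifies the SOD attached to $\sigma_{\bullet}$ with $\langle E_0,\dots,E_d\rangle$, ordered by $<^{\text{inf}}$, i.e., by the $\Im(-u_j(\tau))$.

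The hard part is really only producing $\delta$ with the required simultaneous monotonic interlacement of the fractional parts $\beta_j(\delta)$: this is the only ingredient that uses more than the supplied asymptotics, and it reduces to a density statement on $(\R/\Z)^{d+1}$ already guaranteed by the distinctness of the $\Im(-u_j(\tau))$. Everything else is a direct application of Theorem~\ref{thm_stabcond_fromexc}, Proposition~\ref{prop_pure_alg_stabcond}, and routine bookkeeping with the asymptotic expansion of $Z^{\tau}$.
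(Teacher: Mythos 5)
Your construction of the quasi-convergent path itself (take any continuous lifts of $\arg Z^{\tau}(E_j)/\pi$, note that the consecutive differences tend to $+\infty$, apply Theorem~\ref{thm_stabcond_fromexc}, and use Proposition~\ref{prop_pure_alg_stabcond} to get purity and hence quasi-convergence and the SOD $\langle E_0,\dots,E_d\rangle$) is essentially the paper's argument and is fine. The genuine gap is in the ``moreover'' clause, and it comes from your decision to freeze $Z'=Z^{\tau}$ and hunt for a good $\delta$. First, your claimed equivalence is wrong: since a phase lift of a stable object is only free modulo $2$ (not modulo $1$), the simultaneous conditions $\ceil{\phi_j(\delta)}<\phi_{j+1}(\delta)$ and $\phi_{j+1}(\delta)-\phi_j(\delta)<1$ force $\lfloor\phi_{j+1}\rfloor=\lfloor\phi_j\rfloor+1$ exactly, which imposes a parity (mod $2$) constraint on the integer parts of $\arg Z^{\tau}(E_j)/\pi$ in addition to the strict decrease of the fractional parts $\beta_j(\delta)$; e.g.\ arguments $0.5\pi$ and $0.3\pi$ have decreasing fractional parts but admit no admissible pair of lifts. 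Second, and more seriously, the equidistribution step does not go through: the correct condition is that $(\theta_{j+1}-\theta_j)\bmod 2$ lies in $(1-\{\theta_j\},1)$ for every $j$, where $\theta_j(\delta)\approx \Im(-u_j(\tau))/(\pi\delta)+\mathrm{const}_j$, and the closure of this trajectory in $(\R/2\Z)^{d+1}$ is only a coset of the subtorus determined by the rational relations among the $\Im(-u_j(\tau))$, which can miss the required open region entirely. Concretely, with $\Im(-u_j(\tau))/\pi=(0,1,2)$ and constant terms $(0.5,0,0)$ the two conditions $\theta_1\in(1,1.5)$ and $\theta_1\in(1-\{\theta_1\},1)$ (mod $2$) are incompatible for every value of the parameter, so no $\delta$ exists; distinctness of the imaginary parts does not rescue you, and this is exactly the point where you defer to ``an elementary density argument on the appropriate subtorus.'' A smaller but related mismatch: the theorem asks for $\mu<0$ depending only on the $\Im(-u_j(\tau))$, whereas your $\mu=-2\min_j(\epsilon_{j+1}-\epsilon_j)$ depends on the accidental fractional parts at the chosen $\delta$; and ``as $r$ decreases the differences only grow'' needs a uniform error bound on $(0,\delta]$, not just the stated asymptotics.

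The paper avoids all of this by using the freedom built into the statement: $Z'$ only has to come from \emph{some} fundamental solution $\Phi'$ of $\tilde{\nabla}_{|\tau}=0$, so one may replace $Z^{\tau}(E_j)$ by $e^{\ir\beta_j}Z^{\tau}(E_j)$ for constants $\beta_j$ (a change of fundamental solution). Choosing the $\beta_j$ recursively so that at the starting time the approximate phases are exactly $\tfrac12,\tfrac32,\dots$, the interlacing in consecutive unit intervals is arranged by fiat, the bound $\phi_r(E_{j+1})-\phi_r(E_j)<1+\tfrac{\mu}{2}$ holds with $\mu$ depending only on the $\Im(-u_j(\tau))$, and the condition $\ceil{\phi_j}<\phi_{j+1}$ persists for smaller $r$ by a uniform error estimate. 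If you want to keep your strategy, you must either introduce these rotations (at which point your $\delta$-hunting becomes unnecessary) or restrict to situations where the mod-$2$ configuration of the actual arguments of $Z^{\tau}(E_j)$ can be controlled; as written, the key quantitative clause — the one later used in Corollary~\ref{cor_path_starts_in_geom_proj} to show the path starts in the geometric region — is not established.
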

\begin{proof}
    The first step of the proof consists in defining the path $\sigma_r$.
    Let us reorder the indexes of the eigenvectors in such a way that 
     $\Im(-u_{j})<\Im(-u_{j+1})$ for $j=0, \dots, d-1$.
    Define 
    \begin{equation*}
        \a_j(r):=e^{\ir \beta_j}Z^{\tau}(E_{j})
    \end{equation*}
    for $j=0,\dots, d$ and some $\beta_j\in \R$ to be determined later.
    By assumption we have the following asymptotics
    \begin{equation*}
        \ln{\a_j(r)}\sim \frac{\mathrm{dim}(F)}{2}\ln{(r)}+\Re(-u_j(\tau))/r+\ir\left(\Im(-u_j(\tau))/r+\beta_j\right) \text{ as } r\to 0^+.
    \end{equation*}
    Note that $\a_j$, $j=0,\dots,d$ never vanish because they are non trivial solutions of a linear differential equation whose solutions are uniquely determined by their initial values.
    We define the phases of $E_j$ as  $\phi(E_j):=\frac{1}{\pi}\Im(\ln{\a_j(r)})$.

    Let us fix a $0<\epsilon\ll 1$ such that
    $\frac{\epsilon}{\pi}|\Im(-u_j(\tau))|<\frac{1}{4}$, $j=0,\dots,d$.
    Let us observe that
    $|\frac{\epsilon}{\pi}(\Im(u_{j+1}(\tau))-\Im(u_j(\tau)))|<\frac{1}{2}$ for $j=0,\dots,d-1$.
    Let us fix
    \begin{equation*}
    \begin{split}
        \mu:=\min\left\{\frac{\epsilon}{\pi}(\Im(u_{j+1}(\tau))-\Im(u_j(\tau)))\right\}\\
        \mu':=\min\left\{\frac{\epsilon}{\pi}(\Im(u_{j}(\tau))-\Im(u_{j+1}(\tau)))\right\}
    \end{split}
    \end{equation*}
    and observe that $-\frac{1}{2}<\mu<0<\mu'<\frac{1}{2}$.
    Let us choose a $\delta'>0$ small enough such that $\frac{1}{\delta'}>\epsilon$ and such that for each $r\in (0,2\delta')$
    the error in the estimations of the phases
    \begin{equation*}
        \phi(E_j)\sim \frac{1}{\pi}(\Im(-u_j(\tau))/r+\beta_j) 
    \end{equation*}
    is less then $\min\{\frac{-\mu}{4},\frac{\mu'}{4}\}$.

    Choose $\beta_0$ such that 
    $\frac{1}{\pi}(\Im(-u_{0}(\tau)/\delta')+\beta_{0})=\frac{1}{2}$
    and determine the others $\beta_i$ by the following recursive relations
    \begin{equation*}
        \frac{1}{\pi}\Im(-u_{i+1}(\tau)/\delta')+\beta_{i+1}-\frac{1}{\pi}(\Im(-u_{i}(\tau)/\delta')+\beta_{i})=1.
    \end{equation*}
    see Figure~\ref{fig_asymptotics}.
    Note that the errors in the estimations that we did till now does not change if we vary $\beta_j$, $j=0,\dots,d$.
\begin{figure}[ht]
\includegraphics[width=8cm]{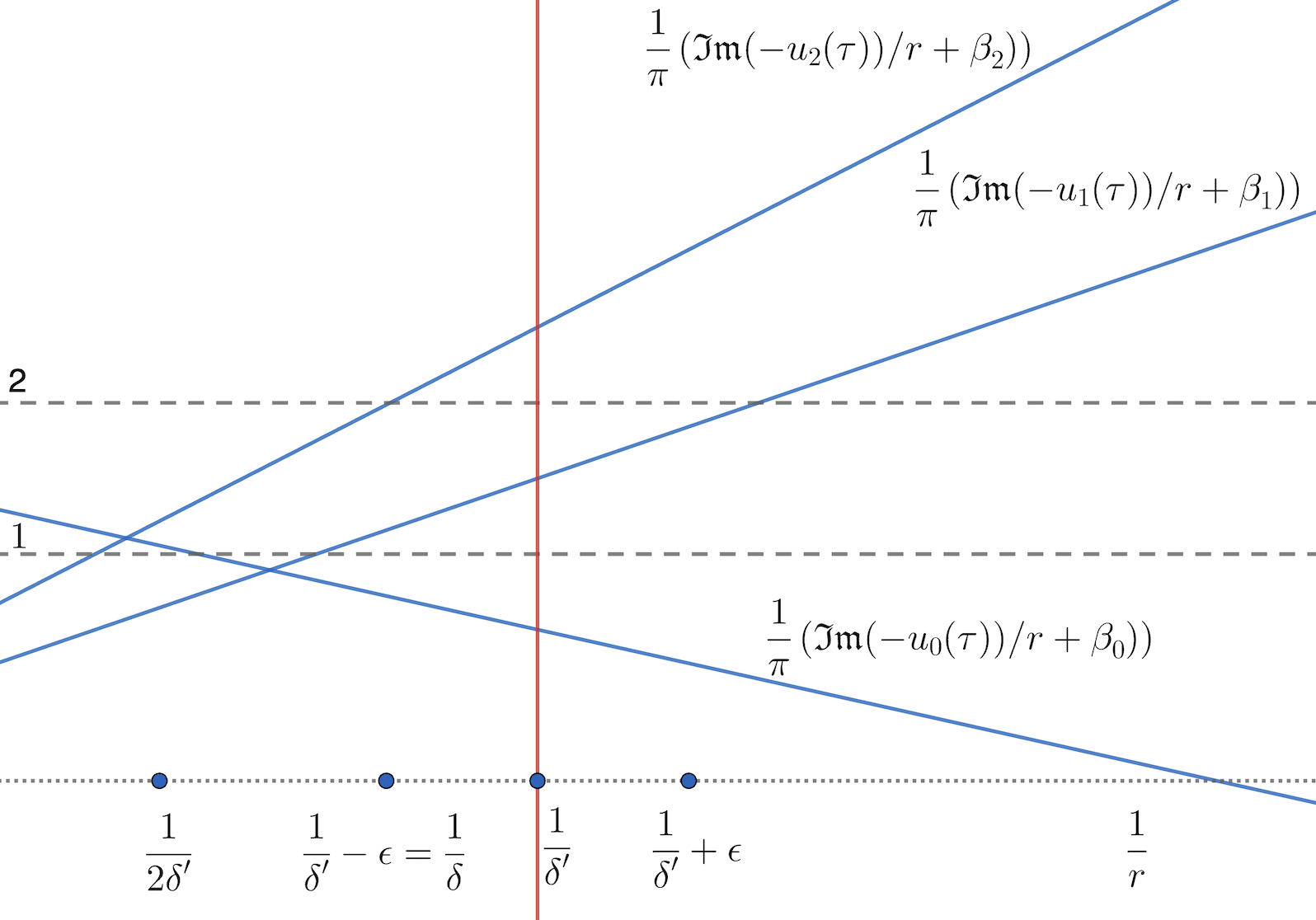}
\caption{Plot of the approximated phases, note that on the horizontal axis we have $\frac{1}{r}$.}
\label{fig_asymptotics}
\end{figure}
    Fix $\delta>0$ such that $\frac{1}{\delta}=\frac{1}{\delta'}-\epsilon$.
    We observe that for $1/r\in (1/\delta'-\epsilon,1/\delta'+\epsilon)$ we have that $\phi(E_j)\in (j,j+1)$:
    for $r=1/\delta'+t\epsilon$, $t\in[-1,1]$ we have 
    \begin{equation*}
    \phi_r(E_j)=j+\frac{1}{2}+t\frac{\epsilon}{\pi}\Im(-u_j(\tau))+\text{error}
    \end{equation*}
    and we also know that $|t\frac{\epsilon}{\pi}\Im(-u_j(\tau))|<\frac{1}{4}$ and $|\text{error}|<\frac{1}{8}$. 
    For $r=\delta$ we have
    \begin{equation*}
        \begin{split}
            |\phi(E_{j+1})-\phi(E_j)|< 
             & \frac{-\mu}{2} +|\frac{1}{\pi}\Im(-u_{j+1}(\tau))/(1/\delta'-\epsilon)+\beta_{j+1}-\frac{1}{\pi}(\Im(-u_j(\tau))/(1/\delta'-\epsilon)+\beta_j)|\\
             <&\frac{-\mu}{2}+|1+\frac{\epsilon}{\pi}(\Im(u_{j+1}(\tau))-\Im(u_{j}(\tau)))|\\
             <&1+\frac{\mu}{2}<1.
        \end{split}
    \end{equation*}
    Similarly for $1/r=1/\delta'+\epsilon$
    we have 
    \begin{equation*}
        |\phi(E_{j+1})-\phi(E_j)|>1+\mu'>1.
    \end{equation*}
    So on the interval $1/r>1/\delta'+\epsilon$ we have $\phi(E_{j+1})-\phi(E_j)>1$.

    We have just proved that for each $r\in(0,\delta)$ the assumptions of Theorem~\ref{thm_stabcond_fromexc} are satisfied. So there is a path of algebraic stability conditions $\sigma_r$, $r\in (0,\delta)$.
    The path is quasi-convergent for $r\to 0$ indeed for $r\ll1$ we have $|\phi(E_{j+1})-\phi(E_j)|>2$ so the only stable objects are $E_j$, $j=0,\dots,d$ and their shifts, see Proposition~\ref{prop_pure_alg_stabcond}.
\end{proof}

\subsection{Projective spaces}\label{subsection_proj_spaces}
In this section we verify that the assumptions of Proposition~\ref{prop_extimated_Ztau} and Theorem~\ref{thm_gammaII_proposalIII} holds for projective spaces: so we fulfill Proposal~\ref{proposalIII_DHL} for all projective spaces.
Moreover for a particular choice of full exceptional collection of $\pr$ we construct a quasi-convergent path of stability conditions that starts in the geometric region, see Corollary~\ref{cor_path_starts_in_geom_proj}.
In Theorem~\ref{thm_conj_pr} we prove Conjecture~\ref{conj_sigma_from_qcoh} for all projective spaces.

\begin{thm}\label{thm_proposalIII_proj}
    Let $\{E_0, \dots, E_{N-1}\}$ be a full, strong exceptional collection of $\mathrm{D^b}(\pr)$. 
    Then for a generic $\tau\in H^2(\pr,\C)$ there exists a quasi-convergent path $\sigma_r=(Z',\slicing_r)$ with 
    \begin{equation*}
        Z'(\alpha)=\int_{\pr}\Phi'(\alpha),\qquad \alpha\in H^{\bullet}(\pr,\C)
    \end{equation*}
    for $\Phi'$ a fundamental solution of $\Tilde{\nabla}_{|_{\tau}}$.
    Moreover the SOD given by $\sigma_{\bullet}$ is $\langle E_0,\dots,E_{N-1}\rangle$.
\end{thm}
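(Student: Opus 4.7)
The plan is to reduce the statement to the general construction in Theorem~\ref{thm_gammaII_proposalIII} by verifying its two hypotheses for $F=\pr^{N-1}$: the asymptotic formula~\eqref{eq_5} for $\ln Z^{\tau}(E_j)$, and the distinctness of the imaginary parts of the eigenvalues of $\mathcal{E}\star_{\tau}$.

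For the eigenvalue condition, recall from~\eqref{def_E_tau} that the spectrum of $\mathcal{E}_{\tau}$ is $u_j(\tau) = N\zeta^j e^{\tau/N}$, $j = 0,\dots,N-1$, with $\zeta = e^{2\pi \ir /N}$. Writing $e^{\tau/N} = \rho e^{\ir \theta}$ with $\rho>0$, the imaginary parts are $N\rho\sin(\theta + 2\pi j/N)$, and two of them coincide precisely when $2\theta + 2\pi(j+k)/N \equiv \pi \pmod{2\pi}$ for some $j\neq k$. This cuts out a finite union of real hyperplanes in $\H^2(\pr,\C)\cong \C$, so any $\tau$ in the (dense open) complement satisfies the distinctness condition.

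For the asymptotic formula, I apply Proposition~\ref{prop_extimated_Ztau}, whose hypotheses are Gamma Conjecture II at some $\tau_0\in B$ with respect to the given exceptional collection, plus the non-vanishing $\int_{\pr}\Psi_j^{\tau}\neq 0$. Gamma Conjecture II at $\tau_0=0$ for projective spaces is established in \cite{Galkin_2016}; the flexibility in the bending of integration contours (Figure~\ref{figure_bending_mutation}) together with the freedom in the helix foundation noted after Proposition~\ref{prop_GGI3.4.8} realizes the exceptional collection associated to the asymptotically exponential solutions as an arbitrary mutation of the Beilinson collection $\{\o,\dots,\o(N-1)\}$, which one matches with the given $\{E_j\}$. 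The non-vanishing $\int_{\pr}\Psi_j^{\tau}\neq 0$ is a direct computation: solving $\mathcal{E}_{\tau}v = u_j(\tau) v$ in the basis $\{1,h,\dots,h^{N-1}\}$ from the explicit matrix~\eqref{def_E_tau} gives eigenvectors with $h^i$-coefficient proportional to $(\zeta^{-j}e^{-\tau/N})^i$, so the top coefficient $(\zeta^{-j}e^{-\tau/N})^{N-1}$ is nonzero and survives Poincaré normalization.

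With both hypotheses in place, Theorem~\ref{thm_gammaII_proposalIII} delivers the required quasi-convergent path $\sigma_r = (Z',\slicing_r)$ with $Z'(\alpha) = \int_{\pr}\Phi^{\tau}(\alpha)$, $\Phi^{\tau}$ the canonical fundamental solution of $\Tilde{\nabla}_{|_{\tau}}=0$. The main obstacle I foresee is reconciling the \emph{any} full strong exceptional collection appearing in the hypothesis with those accessible through the bending-of-paths procedure: for $\pr^2$ this is automatic by Example~\ref{example_exc_coll_P2}, while for higher $N$ one must either invoke that every full strong exceptional collection of $\mathrm{D^b}(\pr^{N-1})$ lies in the mutation orbit of the Beilinson collection, or argue that the specific combinatorics of right/left mutations in the bending construction produce enough collections to cover the statement.
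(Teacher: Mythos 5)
Your proposal is correct and is essentially the paper's proof: one checks $\int_{\pr}\Psi_j^{\tau}\neq 0$ (the paper does this by the same recursion on the entries of an eigenvector of the matrix~\eqref{def_E_tau} that you compute in closed form), observes that $u_j(\tau)=N\zeta^{j}e^{\tau/N}$ has pairwise distinct imaginary parts for generic $\tau$, and then applies Proposition~\ref{prop_extimated_Ztau} followed by Theorem~\ref{thm_gammaII_proposalIII}. The only divergence is your final ``obstacle'': for the statement as written it is not needed, since the conclusion does not refer to the chosen collection, so one may simply run the construction with the exceptional collection furnished by the Gamma Conjecture II solution (a mutation of the Beilinson collection), which is exactly the collection for which Proposition~\ref{prop_extimated_Ztau} yields the asymptotics~\eqref{eq_5}; this is also all the paper's proof does. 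Be aware, though, that your stronger claim that contour-bending realizes an \emph{arbitrary} mutation matching any prescribed $\{E_j\}$ is not justified; it would only become relevant if one insisted that the path be adapted to the specific given collection (the reading implicitly used in Corollary~\ref{cor_path_starts_in_geom_proj}), a point on which the paper is equally silent.
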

\begin{proof}
    Let us consider an idempotent $\Psi_j^{\tau}$ of $\star_{\tau}$, it is in particular an eigenvector of $E_{\tau}$. 
    If by contradiction $\int_{\pr}\Psi_j^{\tau}=0$ then from the equality $E_{\tau}\Psi_j^{\tau}=u_j\Psi_j^{\tau}$ we get that the first entry of the vector $\Psi_j^{\tau}$ is zero. 
    The matrix $E_{\tau}$ behaves as a permutation matrix, see~\eqref{def_E_tau}, so the second entry of $\Psi_j^{\tau}$ is also zero, by recursion we get that $\Psi_j^{\tau}=0$, a contradiction.
    We have verified assumptions of Proposition~\ref{prop_extimated_Ztau}, so we get the asymptotics in~\eqref{eq_5}.
    In order to apply Theorem~\ref{thm_gammaII_proposalIII} we are left to verify that for a generic $\tau\in H^2(\pr, \C)$ the imaginary parts $\Im(-u_j(\tau))$ are distinct. This is clear since $u_j(\tau)=Ne^{-j2\pi \ir /N}e^{\tau/N}$.
\end{proof}

Let us consider the full strong exceptional collection 
    $\{\Omega^i(i)\}$ 
    and let us consider the path of stability conditions $\sigma_r$ constructed in Theorem~\ref{thm_proposalIII_proj}.
\begin{cor}\label{cor_path_starts_in_geom_proj}
    The path $\sigma_r$ associated to the exceptional collection $\{\Omega^i(i)\}$ starts in the geometric part of the space of stability conditions.
\end{cor}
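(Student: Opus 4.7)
The plan is to combine the explicit phase estimate built into the proof of Theorem~\ref{thm_gammaII_proposalIII} with the geometricity criterion for Beilinson-type algebraic stability conditions recorded in Proposition~\ref{prop_beilinsonexc_geom_stab}. Applying Theorem~\ref{thm_proposalIII_proj} to the full strong exceptional collection $\mathcal{E}=\{\Omega^{j}(j)\}$ on $\pr^{N-1}$ produces a quasi-convergent path $\sigma_r=(Z^{\tau},\slicing_r)$ on an interval $(0,\delta)$ whose central charge is built from a fundamental solution of $\Tilde{\nabla}_{|_{\tau}}=0$ for generic $\tau\in \H^2(\pr,\C)$, and along which each $\Omega^{j}(j)$ is $\sigma_r$-stable.

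Next I would extract from the proof of Theorem~\ref{thm_gammaII_proposalIII} the explicit inequality
\[
\phi_r(\Omega^{j+1}(j+1))-\phi_r(\Omega^{j}(j)) \;<\; 1+\tfrac{\mu}{2} \;<\; 1, \qquad j=0,\dots,N-2,
\]
valid at $r=\delta$, where $\mu\in(-\tfrac{1}{2},0)$ is the constant fixed in that proof purely in terms of $\Im(-u_j(\tau))$. By continuity of the phases along the path, this strict bound persists on a left-neighborhood of $\delta$, which is precisely the hypothesis $\phi_{i+1}-\phi_{i}<1$ required by Proposition~\ref{prop_beilinsonexc_geom_stab}. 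That proposition then yields that every skyscraper sheaf $\o_{\pt}$ is $\sigma_r$-stable of the same phase, so $\sigma_r$ is geometric for such $r$; combined with the pure algebraic behavior established as $r\to 0^+$ (where $\phi_r(E_{j+1})-\phi_r(E_j)>2$ and Proposition~\ref{prop_pure_alg_stabcond} applies), this gives exactly the claim that the path starts in the geometric region and ends in the algebraic one.

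The only point requiring genuine care is a bookkeeping issue around the ordering: the construction in Theorem~\ref{thm_gammaII_proposalIII} relabels the exceptional objects so that $\Im(-u_j(\tau))$ is strictly increasing in $j$, whereas Proposition~\ref{prop_beilinsonexc_geom_stab} uses the natural order $\{\o,\Omega(1),\dots,\Omega^{N-1}(N-1)\}$. For projective spaces the eigenvalues have the explicit form $u_j(\tau)=Ne^{2\pi\ir j/N}e^{\tau/N}$, so by choosing the imaginary part of $\tau$ suitably one can match the two orderings; alternatively, since every full exceptional collection on $\pr^{N-1}$ is obtained from $\{\o(i)\}$ by mutation, one may reduce to the natural order without altering the construction. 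I expect this compatibility check to be the only nontrivial part of the argument; otherwise the corollary is an immediate consequence of the two cited results.
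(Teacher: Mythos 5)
Your proposal follows essentially the same route as the paper: stability of the $\Omega^i(i)$ along the path, the phase bound $\phi_r(\Omega^{i+1}(i+1))-\phi_r(\Omega^i(i))<1+\tfrac{\mu}{2}<1$ for $r\sim\delta$ from Theorem~\ref{thm_gammaII_proposalIII}, and then Proposition~\ref{prop_beilinsonexc_geom_stab} to conclude that skyscraper sheaves are stable, i.e.\ the path starts in the geometric region. The ordering issue you flag is already absorbed in the statement of Theorem~\ref{thm_gammaII_proposalIII} (the phase inequality is asserted for consecutive members of the exceptional collection, whose order agrees with the SOD it produces), so no extra compatibility check is needed beyond what the paper does.
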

\begin{proof}
    Let us recall that by construction $\Omega^i(i)$ are all stable for all $r\in (0,\delta)$. 
    Moreover for $r\sim \delta$ we have that $\phi(\Omega^{i}(i))-\phi(\Omega^{i+1}(i+1))<1$, see Theorem~\ref{thm_gammaII_proposalIII}.
    We conclude by noticing that we can apply Proposition~\ref{prop_beilinsonexc_geom_stab} so  
for $r\sim \delta$ the skyscraper sheaves of points are stable of the same phase.
We conclude that the path starts in the geometric region.
\end{proof}

Let us discuss Conjecture~\ref{conj_sigma_from_qcoh}: we start with a necessary condition.
Let us consider a quasi-convergent path $\sigma_r$ in $\mathrm{D^b}(\pr)$ whose central charge is given by~\eqref{eq_def_Ztau} i.e., for $V\in \Stab(\mathrm{D^b}(\pr))$
    \begin{equation*}
        Z^{\tau}(V):=(2\pi r)^{\frac{\mathrm{dim}(F)}{2}}\int_{F}\Phi^{\tau}(\hat{\Gamma}\cdot\Ch(V)).
    \end{equation*}
    Let us fix an admissible phase $\phi\in [-\frac{\pi}{2},\frac{\pi}{2}]$ and a full exceptional collection $\{E_j\}$ obtained as in the proof of the Gamma Conjecture II at $\tau\in \H^2(\pr,\C)$.
    Let us denote by $\langle\D_1,\dots,\D_m\rangle$ the SOD given by the path, see Proposition~\ref{prop2.5.1_GGI}.
    By a direct application of the results in Section~\ref{paragraph_general_results} we have that
    \begin{enumerate}
        \item[($\bullet$)] $m$ is the number of distinct imaginary parts of $\{u_j(\tau)\}$,
        \item[($\bullet$)] if $\{E_j\}$ are all limit semistable then $\D_h=\langle E_j| \Im(-u_j(\tau))=\gimel(\D_h) \rangle$.
    \end{enumerate}

\begin{cor}\label{cor_neg_conj_pr}
    Let $\{E_j\}$ be the exceptional collection obtained as mutation of $\{\o,\dots,\o(N-1)\}$ in solution of the Gamma Conjecture II for $\pr$ at $\tau_0=0$.
    Let $Z^{\tau}$ be the quantum central charge~\eqref{eq_def_Ztau} for $\tau$ near $0$.
Then there are no quasi-convergent paths $\sigma_r=(Z^{\tau}, \slicing_r)$ such that $\{E_j\}$ are limit semistable.
\end{cor}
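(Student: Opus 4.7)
The plan is a proof by contradiction. Assume a quasi-convergent path $\sigma_r=(Z^\tau,\slicing_r)$ exists with every exceptional object $E_j$ limit semistable, and extract from the two bullet points preceding the corollary a rigid monotonicity constraint that will then be shown to fail.

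First, Proposition~\ref{prop_extimated_Ztau} applies: the non-vanishing $\int_\pr\Psi_j^\tau\neq 0$ has already been verified in the proof of Theorem~\ref{thm_proposalIII_proj} via the cyclic shape of $\mathcal{E}_\tau$. Hence for $\tau$ near $0$
\[
\ln Z^\tau(E_j)\sim \tfrac{N-1}{2}\ln(2\pi r)-u_j(\tau)/r\qquad\text{as }r\to 0^+,
\]
where $u_j(\tau)$ is the eigenvalue of $\mathcal{E}_\tau$ paired with $E_j$, equal to $N\zeta^{-i(j)}$ at $\tau=0$ for the bijection $i\colon\{0,\dots,N-1\}\to\{0,\dots,N-1\}$ recorded in Remark~\ref{rmk_asym_quant_Z}; in particular $\phi_r(E_j)\sim\tfrac{1}{\pi r}\Im(-u_j(\tau))$. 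Both bullet points preceding the corollary then apply: they give an SOD $\langle\D_1,\dots,\D_m\rangle$ with $\D_h=\langle E_j\mid \Im(-u_j(\tau))=\gimel(\D_h)\rangle$ and $\gimel$ strictly increasing. For these subcategories to assemble into a genuine SOD, semi-orthogonality forces $\Hom^\bullet(E_{j_1},E_{j_2})=0$ whenever $\Im(-u_{j_1}(\tau))>\Im(-u_{j_2}(\tau))$. But in an exceptional collection such a vanishing holds only if $j_1>j_2$, so one obtains the necessary condition that $j\mapsto\Im(-u_j(\tau))$ is non-decreasing.

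To conclude it remains to exhibit a descent in this sequence. By continuity of the spectrum of $\mathcal{E}_\tau$ it suffices to find one at $\tau=0$, where $\Im(-u_j(0))=N\sin(2\pi i(j)/N)$. The bijection $i$ is read off the bending-and-mutation recipe around Figure~\ref{figure_bending_mutation}; it is of cyclic nature, being compatible with the $\Z/N$-action on the pole configuration $\{N\zeta^{-k}\}$. The unique permutation that sorts $\{N\sin(2\pi k/N)\}$ in increasing order, by contrast, separates positive from negative sines and thereby breaks this symmetry for every $N\geq 3$. Consequently $i$ cannot be the sorting permutation, so a descent persists and contradicts the necessary condition of the previous paragraph. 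The case $\pr^2$ illustrates this directly: the triple $(\Im(-u_0(0)),\Im(-u_1(0)),\Im(-u_2(0)))$ is a cyclic shift of $(0,\tfrac{3\sqrt{3}}{2},-\tfrac{3\sqrt{3}}{2})$ and is visibly non-monotone.

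The main obstacle is the last step: one has to pin down the bijection $i$ concretely enough to detect a descent. For small $N$ the mutation diagram can be analysed by inspection; for general $N$ one needs a uniform combinatorial statement capturing the contrast between the cyclic $\Z/N$-compatibility of the bending procedure and the non-cyclic nature of the sorting permutation of $\{\sin(2\pi k/N)\}$.
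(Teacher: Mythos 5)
Your overall strategy is the paper's: get the asymptotics $\ln Z^{\tau}(E_j)\sim \frac{N-1}{2}\ln(2\pi r)-u_j(\tau)/r$ from Proposition~\ref{prop_extimated_Ztau}, feed them into the general results of Section~\ref{paragraph_general_results} to obtain the necessary condition that $j\mapsto \Im(-u_j(\tau))$ be increasing, and then contradict that condition. But the two inputs that carry the content are not closed. First, your claim that in an exceptional collection the vanishing $\Hom^{\bullet}(E_{j_1},E_{j_2})=0$ ``holds only if $j_1>j_2$'' is not a general fact about exceptional collections, and the monotonicity statement (Corollary~\ref{cor_increasing}) genuinely needs a non-vanishing hypothesis: two mutually orthogonal exceptional objects could sit in the components of the limit SOD in either order. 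What makes the non-vanishing true here is that $\{E_j\}$ is obtained by mutation from a foundation of the helix $\{\o(i)\}_{i\in\Z}$, so $\Hom^{\bullet}(E_j,E_h)\neq 0$ for all $j<h$; this is precisely the first line of the paper's proof and has to be invoked, not assumed as a generality.

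Second, and more seriously, the step you yourself flag as the ``main obstacle'' is exactly the step the corollary needs: that $j\mapsto \Im(-u_j(\tau))$ fails to be increasing for the specific bijection $i$ produced by the Gamma~II solution at $\tau_0=0$. Your symmetry heuristic does not prove this: the bijection $i$ arises from bending the integration rays to a fixed admissible direction $e^{\ir\phi}$, and that very choice breaks the $\Z/N$-symmetry of the configuration $\{N\zeta^{-k}\}$, so ``compatibility with the $\Z/N$-action'' is neither a precise statement nor evidently true, and contrasting it with the sorting permutation of $\{\sin(2\pi k/N)\}$ decides nothing. The paper closes this step not by an abstract combinatorial argument but by the explicit description of $i$ coming from the bending/mutation procedure (Remark~\ref{rmk_asym_quant_Z} and Figure~\ref{figure_conf_wrong}): with $0<\phi\ll 1$ the eigenvalues attached to $E_0,\dots,E_{N-1}$ are read off going around the circle $\{N\zeta^{-k}e^{\tau/N}\}$ in angular order starting near the positive real axis, so the sequence of imaginary parts $\Im(-u_j(\tau))$ rises and falls and cannot be monotone once $N\geq 3$ (and this persists for $\tau$ near $0$). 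Without pinning $i$ down to this extent — which your closing paragraph concedes you have not done beyond small $N$ — the argument remains a plan rather than a proof.
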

\begin{proof}
    Since $\{\o(i)\}_{i\in\Z}$ is a helix we have in particular that $\Hom^{\bullet}(E_j,E_h)\neq 0$ for $j< h$.
    By construction it is clear that 
    \begin{equation*}
        \ln{Z^{\tau}(E_j)}\sim \frac{N-1}{2} \ln{(2\pi r)}-\zeta^{i(j)}Ne^{\tau/N}/r \text{ as } r\to 0^+ .
    \end{equation*}
    By construction we also have that 
    \begin{equation*}
        \begin{split}
            \{E_j\}&\to \{\Im(-\zeta^{i(j)}Ne^{\tau/N})\}\\
            E_j&\mapsto \Im(-\zeta^{i(j)}Ne^{\tau/N})
        \end{split}
    \end{equation*}
    is not increasing.
    So if $E_j$ are limit semistable Corollary~\ref{cor_increasing} gives us a contradiction, see Figure~\ref{figure_conf_wrong}.
\begin{figure}[ht]
\includegraphics[width=8cm]{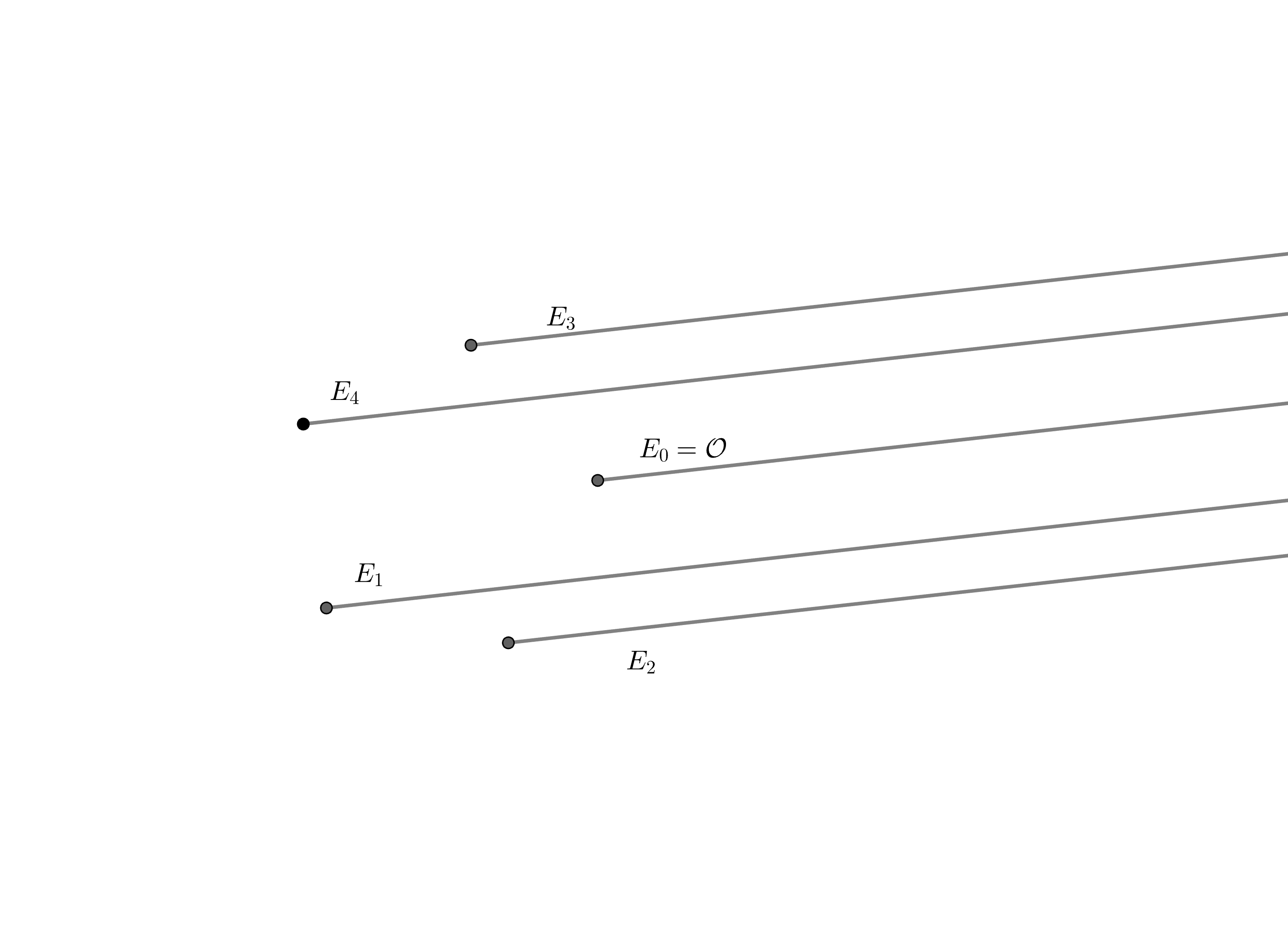}
\caption{Configuration of the integration paths and the associated objects in the exceptional collection.}
\label{figure_conf_wrong}
\end{figure}
\end{proof}
In contrast to the Corollary~\ref{cor_neg_conj_pr} above we have a positive result.

\begin{thm}\label{thm_conj_pr}
    For any $\tau\in\H^2(\pr,\C)$ there is a quasi-convergent path of stability conditions $\sigma_r=(Z^{\tau}, \slicing_r)$ and $\{E_j\}$ a mutation of 
    \begin{equation*}
        \left\{\o\left(\ceil*{\frac{N}{2}}\right),\o\left(\ceil*{\frac{N}{2}}+1\right), \dots, \o\left(\ceil*{\frac{N}{2}}+N-1\right)\right\}
    \end{equation*}
    such that the object $E_j$ is $\sigma_r-$stable for $j=0,\dots,N-1$. Moreover 
    \begin{enumerate}
        \item[\rm{(a)}] if the imaginary parts $\Im(u_j(\tau))$, $j=0, \dots, N-1$ are all distinct then the path $\sigma_r$ gives as SOD (see Proposition~\ref{Proposition_2.20_HLJR}) the exceptional collection $\{E_j\}$,
    \item[\rm{(b)}] if some imaginary parts of 
    $\{-u_j(\tau)\}$ coincide then the components  of the SOD 
    $$\langle \D_{\alpha}|\alpha\in \{\Im(-u_j(\tau))\}\rangle=\mathrm{D^b}(\pr)$$
    given by $\sigma_r$ are indexed by the imaginary values $\{\Im(-u_j(\tau))\}$ and $\D_{\alpha}$ is generated by those $E_j$ whose phase has the asymptotics $\sim \alpha/r$.
    \end{enumerate}
\end{thm}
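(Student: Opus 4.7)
The plan is to apply Theorem~\ref{thm_stabcond_fromexc} along the path, using a mutation of the shifted Beilinson collection and the integer shifts that remain free once the central charge $Z^\tau$ is prescribed. I would first extend Remark~\ref{rmk_shifted_beil_exc_collection} from $\tau=0$ to arbitrary $\tau\in\H^2(\pr^{N-1},\C)$ via Proposition~\ref{prop_extimated_Ztau}, whose hypothesis $\int_\pr\Psi_j^\tau\neq 0$ is checked exactly as in the proof of Theorem~\ref{thm_proposalIII_proj} from the cyclic shape~\eqref{def_E_tau} of $\mathcal{E}_\tau$. This gives a mutation $\{E_0,\dots,E_{N-1}\}$ of $\{\o(\ceil{N/2}),\dots,\o(\ceil{N/2}+N-1)\}$ with
\[
\ln Z^\tau(E_j)\sim \tfrac{N-1}{2}\ln(2\pi r)-u_{i(j)}(\tau)/r+\ln c_j\qquad\text{as } r\to 0^+,
\]
where $u_k(\tau)=N\zeta^k e^{\tau/N}$ and $c_j=\zeta^{i(j)(N-1)/2}\int_\pr\hat\Gamma_\pr\neq 0$. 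The admissible phase $\phi$ used in the bending is chosen so that $j\mapsto\Im(-u_{i(j)}(\tau))$ is non-decreasing, the shift $\ceil{N/2}$ being precisely what allows this ordering (compare Corollary~\ref{cor_neg_conj_pr}, where the unshifted collection fails).

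On a continuous branch of $\ln Z^\tau(E_j)$, set $\phi_r(E_j):=\Im(\ln Z^\tau(E_j))/\pi+2k_j$ with $k_j\in\Z$ to be chosen, so that the normalisation $|Z^\tau(E_j)|e^{\ir\pi\phi_r(E_j)}=Z^\tau(E_j)$ holds automatically. I would pick the $k_j$'s so that $\ceil{\phi_r(E_j)}<\phi_r(E_{j+1})$ for every $j$ and every $r\in(0,\delta)$. Then Theorem~\ref{thm_stabcond_fromexc} produces the required algebraic $\sigma_r$ with each $E_j$ stable, quasi-convergence for $r\to 0^+$ is a direct verification from the explicit asymptotics of the average logarithm $l_t$, and the description of the SOD (in both cases~(a) and~(b)) follows from Proposition~\ref{Proposition_2.20_HLJR} combined with Lemma~\ref{lem_3} and Proposition~\ref{prop_exc_limitsemist_obj}. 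Case~(a) is immediate: if all $\Im(-u_{i(j)}(\tau))$ are distinct the continuous gaps diverge to $+\infty$ and $k_j\equiv 0$ works on small enough $(0,\delta)$; moreover Proposition~\ref{prop_pure_alg_stabcond} implies that for such $r$ the only stable objects are the $E_j$ and their shifts.

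The main obstacle is case~(b). When $\Im(-u_{i(j)}(\tau))=\Im(-u_{i(j+1)}(\tau))$, the continuous phase gap tends to the finite constant $(\arg c_{j+1}-\arg c_j)/\pi$ as $r\to 0^+$, so shrinking $\delta$ alone cannot force $\ceil{\phi_r(E_j)}<\phi_r(E_{j+1})$. I would resolve this by choosing $k_{j+1}-k_j$ inside each group so that the actual gap lies in $(1,2]$ throughout $(0,\delta)$; between different groups the continuous gap still diverges, so the shifts propagate without conflict. The $\sim^{\mathrm{inf}}$-equivalence classes then coincide with the groups, yielding the claimed components $\mathcal{D}_\alpha=\langle E_j:\Im(-u_{i(j)}(\tau))=\alpha\rangle$. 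The delicate verification is that the arguments $\arg c_j=\pi i(j)(N-1)/N+\arg\int_\pr\hat\Gamma_\pr$ arising from the shifted Beilinson collection really admit, within each group and compatibly with the bijection $i$, an ordering for which all finite gaps can be made strictly positive by integer shifts and the chosen shifts are compatible with the divergent gaps across groups; this is explicit bookkeeping in $\Z/2N\Z$ that can be carried out for each fixed $\tau$.
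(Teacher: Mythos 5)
Your construction of the path itself is sound, and in the degenerate case (b) it takes a genuinely different route from the paper's: you keep $\tau$ fixed and use the freedom of even-integer phase lifts (which do not change $Z^{\tau}$) to force $\lceil\phi_r(E_j)\rceil<\phi_r(E_{j+1})$ inside each group, so that Theorem~\ref{thm_stabcond_fromexc} applies directly at the degenerate $\tau$; the paper instead approaches such a $\tau_1$ along a path $\tau(t)$ with pairwise distinct imaginary parts, uses that the support constant of Remark~\ref{rmk_alg_stab} is uniform, and extends the family $\sigma^t_r$ to $t=1$ by the deformation theorem of \cite{bayer2019short}. Your route would avoid that deformation argument, and the ``bookkeeping'' you flag is in fact harmless: arbitrary even shifts let you place each within-group gap in $(1,3]$ in the exceptional order, while across groups the gaps diverge as $r\to 0^+$, so shrinking $\delta$ resolves any conflict; case (a) is essentially the paper's argument.

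The genuine gap is the assertion that quasi-convergence and the identification of the SOD in case (b) are ``a direct verification from the asymptotics of $l_t$''. In your stability conditions the phases of two exceptional objects in the same group differ asymptotically by a nonzero constant in $(1,3]$, so objects of $\D_{\alpha}=\langle E_j,E_{j+1}\rangle$ are in general \emph{not} limit semistable, and $\sigma_r$ is not pure on $\D_\alpha$ (Proposition~\ref{prop_pure_alg_stabcond} needs gaps larger than $2$). Hence condition (1) of Definition~\ref{def_quaiconpath} requires you to exhibit, for every object --- in particular for nontrivial extensions inside a group, which exist when the shifts are consecutive --- a filtration by limit semistable objects whose phases are asymptotically ordered with strictly positive liminf gaps. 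This forces an analysis of the $\sigma_r$-HN filtrations inside each group for small $r$ (e.g.\ that they are eventually constant in $r$, so their factors are semistable for all small $r$ and hence limit semistable, including the borderline situation where the two in-heart phases $\phi_j+p_j$ and $\phi_{j+1}+p_{j+1}$ have the same limit), and none of this follows from the asymptotics of $l_t$ alone. Similarly, condition (2) and the statement that the $\sim^{\inf}$-classes are exactly the groups --- which is what lets you invoke Proposition~\ref{Proposition_2.20_HLJR} together with Lemma~\ref{lem_3} and Proposition~\ref{prop_exc_limitsemist_obj} to conclude $\D_{\alpha}=\langle E_j\,|\,\Im(-u_j(\tau))=\alpha\rangle$ --- must be checked for all limit semistable objects, not just the $E_j$. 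The paper's proof supplies precisely this missing step by describing the limit semistable filtration explicitly (the filtration by the SOD $\langle E_j\rangle$, contracted whenever two consecutive factors lie in the same group); your proposal defers exactly this, which is the actual content of part (b).
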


\begin{proof}
We start by proving the claim for $\tau$ near $0$.
    Let $\{E_j\}$ be the full  exceptional collection obtained by 
    $ \left\{\o\left(\ceil*{\frac{N}{2}}\right),\o\left(\ceil*{\frac{N}{2}}+1\right), \dots, \o\left(\ceil*{\frac{N}{2}}+N-1\right)\right\}$ as in Remark~\ref{rmk_shifted_beil_exc_collection}.
    Then by construction and Proposition~\ref{prop_extimated_Ztau} it is clear that 
    \begin{equation*}
        \ln{Z^{\tau}(E_j)}\sim \frac{N-1}{2} \ln{(2\pi r)}-\zeta^{i(j)}Ne^{\tau/N}/r \text{ as } r\to 0^+ 
    \end{equation*}
    note that we have implicitly chosen an admissible phase $0<\phi\ll1$.
    Let us recall the notation $u_j(\tau):=\zeta^{i(j)}Ne^{\tau/N}$.
    For $\tau$ near $0$ we have that
    \begin{equation*}
        \begin{split}
            \gimel\circ\F:\{E_j\}&\to \{\Im(-\zeta^{i(j)}Ne^{\tau/N})\}\\
            E_j&\mapsto \Im(-\zeta^{i(j)}Ne^{\tau/N})
        \end{split}
    \end{equation*}
    is increasing, see Figure~\ref{figure_conf_correct}.
\begin{figure}[ht]
\includegraphics[width=8cm]{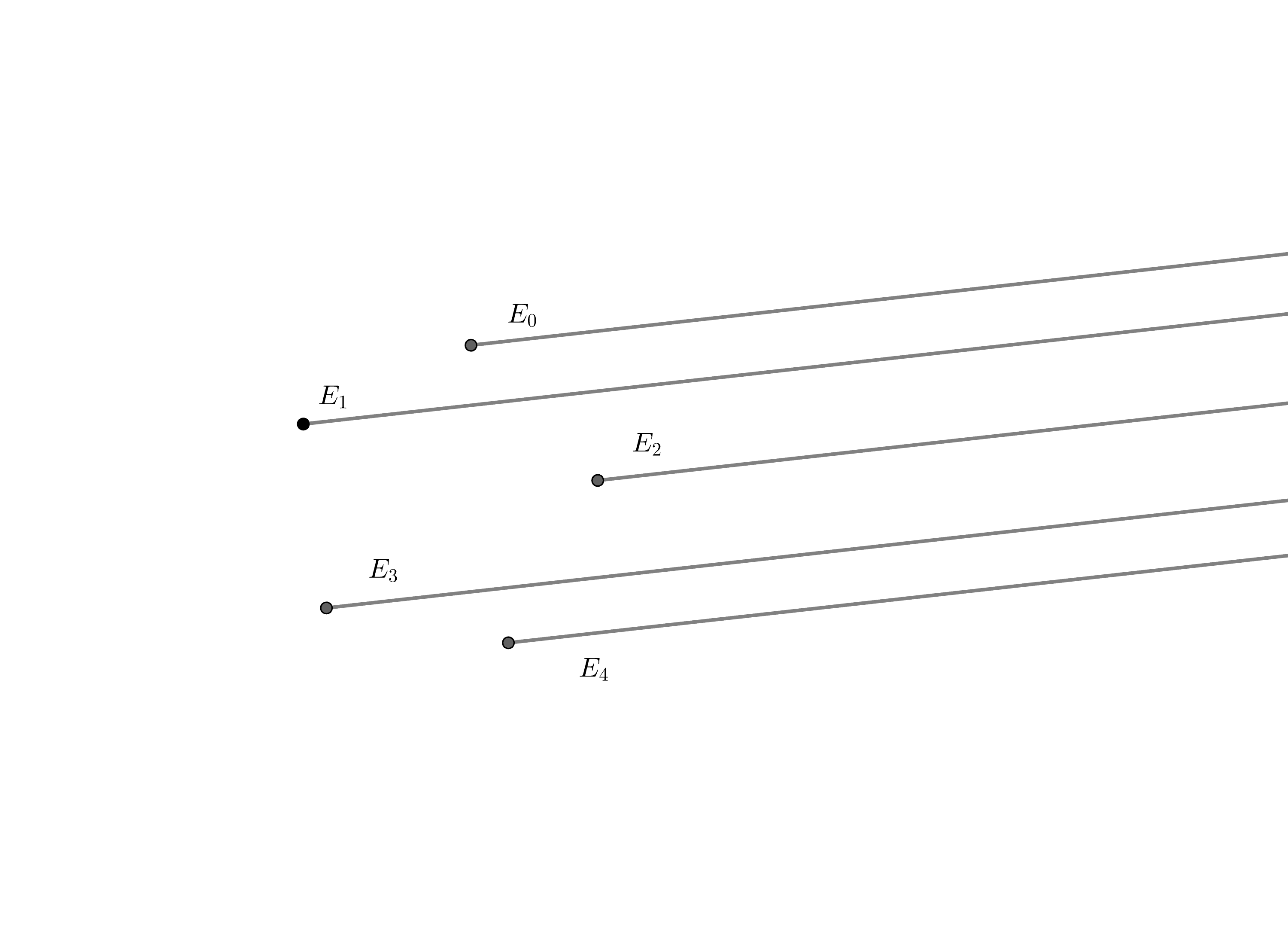}
\caption{Configuration of the integration paths and the associated objects in the exceptional collection.}
\label{figure_conf_correct}
\end{figure}
    For a generic $\tau$ the imaginary values $\Im(-u_j(\tau))$ are all distinct and the map $\gimel\circ\F$ above is strictly increasing.
    For $r\ll 1$ the distance between the phases of $Z^{\tau}(E_{j})$ $j=0,\dots, N-1$ diverges so we can apply Theorem~\ref{thm_stabcond_fromexc} and get a path of algebraic stability conditions $\sigma_r$, $r\in (0,\delta)$.
    By construction and Proposition~\ref{prop_pure_alg_stabcond} we have that for $r\ll 1$ the stability condition  $\sigma_r$ lies in the pure algebraic part of $\Stab(\mathrm{D^b}(\pr))$ associated to the exceptional collection $\{E_j\}$.
    If we start moving $\tau \in \H^{2}(\pr,\C)\cong \C$ from a region near $0$ to any other $\tau'\in \H^2(\pr,\C)$ the exceptional collection will undergo some mutations but the map $\gimel\circ \F$ will remain increasing, see Figure~\ref{figure_1}.
    So for any $\tau\in H^2(\pr,\C)$ such that $\{\Im(-u_j(\tau))\}$ are all distinct we can construct a quasi-convergent path as in  the claim.

    We finish the proof by showing the claim for a $\tau_1\in \H^2(\pr,\C)$ such that some $\Im(-u_j(\tau_1))$, $j=0; \dots, N-1$ may coincide.
    It is clear that there is a path $\tau(t)$, $t\in [0,1]$ with $\tau(1)=\tau_1$ and such that all $t\neq 1$ the imaginary parts 
    $\Im(-u_j(\tau(t)))$ are all distinct.
    So for $t\neq 1$ we have a family of quasi-convergent paths $\sigma_r^t=(Z^{\tau(t)},\slicing_r^t)$.
    By construction we can assume that $\sigma_r^t$, $t\in[0,1)$ are all defined on the same interval $(0,\delta)$.
    For any $r\in (0,\delta)$ fixed we consider the path of stability conditions
    \begin{equation*}
        \begin{split}
            [0,1)&\to \Stab(\mathrm{D^b}(\pr))\\
            t&\mapsto \sigma_r^t
        \end{split}
    \end{equation*}
    there is an associated path of central charges
    \begin{equation*}
        \begin{split}
            [0,1]&\to \Hom(\H^{\bullet}(\pr,\C),\C)\\
            t&\mapsto Z^{\tau(t)}
        \end{split}
    \end{equation*}
    We now prove that $\sigma_r^t$ extends to $t=1$.
    The constant for the support property of the stability conditions $\sigma_r^t$, $t\in[0,1)$ is the same, see Remark~\ref{rmk_alg_stab}.
    Let us denote it by $C:=\frac{C'}{C''}$, see Theorem~\ref{thm_stabcond_fromexc}. 
    In order to prove that the path extends to $t=1$ we apply the deformation theorem \cite[Theorem 1.2]{bayer2019short}.
    So we only need to check that 
    there exists a quadratic form $Q$ on $K(\mathrm{D^b}(\pr))\otimes \R$ such that for some $t\in [0,1)$ 
    \begin{itemize}
        \item[(\rm{1})] $Q$ is negative definite on $\Ker(Z^{\tau(t)})$,
        \item[(\rm{2})] for each  $\sigma_r^t$-semistable object $E$ we have that $Q(E)\geq 0$,
        \item[(\rm{3})] $Q$ is negative definite on $\Ker(Z^{\tau(1)})$.
    \end{itemize}
    It is clear that the quadratic form 
    \begin{equation*}
        Q(\cdot):=C^2|Z^{\tau(t)}(\cdot)|^2-\norm{\cdot}^2
    \end{equation*}
    satisfy the first two conditions.
    We also observe that for $v\in \Ker(Z^{\tau(1)})$ of norm one we have
    \begin{equation*}
        \begin{split}
            Q(v)&=C^2|Z^{\tau(t)}(v)|^2-1\\
            &=C^2(|Z^{\tau(t)}(v)|^2-|Z^{\tau(1)}(v)|^2)-1
        \end{split}
    \end{equation*}
so for any $t\in [0,1)$ such that $\norm{Z^{\tau(t)}}^2-\norm{Z^{\tau(1)}}^2<\frac{1}{2C^2}$ also the last condition is satisfied. 
    It remains to prove that the obtained path $\sigma_r=\sigma^1_r$ is quasi-convergent. 
    We recall that stability is an open condition so $E_j$ are all $\sigma_r$-stable for any $r\in(0,\delta)$, in particular they are limit semistable. 
    The limit semistable filtration of an object $D\in \D$ is the following one: we start with the filtration with respect to  the SOD $\langle E_j\rangle$ and we shrink the filtration when for two consecutive factors $\bigoplus_{i_j}E_j[p_{i_j}],\bigoplus_{i_{j+1}}E_{j+1}[p_{i_{j+1}}]$ we have $\Im(-u_j(\tau_1))=\Im(-u_{j+1}(\tau_1))$.
    For each value $\alpha\in \{\Im(u_j(\tau_1))\}$ we define
    \begin{equation*}
        \D_{\alpha}=\langle E_j|\Im(-u_j(\tau_1))=\alpha\rangle    
    \end{equation*}
    it is a category generated by one or two elements and it consists only of limit semistable objects.
    We conclude that for any limit semistable object $D\in\D_{\alpha}$ 
    \begin{equation*}
        \phi_t(D)\sim \alpha/r \text{ as } r\to 0^+ 
    \end{equation*}
    for some $\alpha\in \{\Im(u_j(\tau_1))\}$.
    This proves that $\sigma_r$ is quasi-convergent.
    Moreover it is clear from the construction that the SOD given by the path is $\langle\D_{\alpha}|\alpha\in\{\Im(-u_j(\tau_1))\}\rangle$.
\begin{figure}[ht]
\includegraphics[width=8cm]{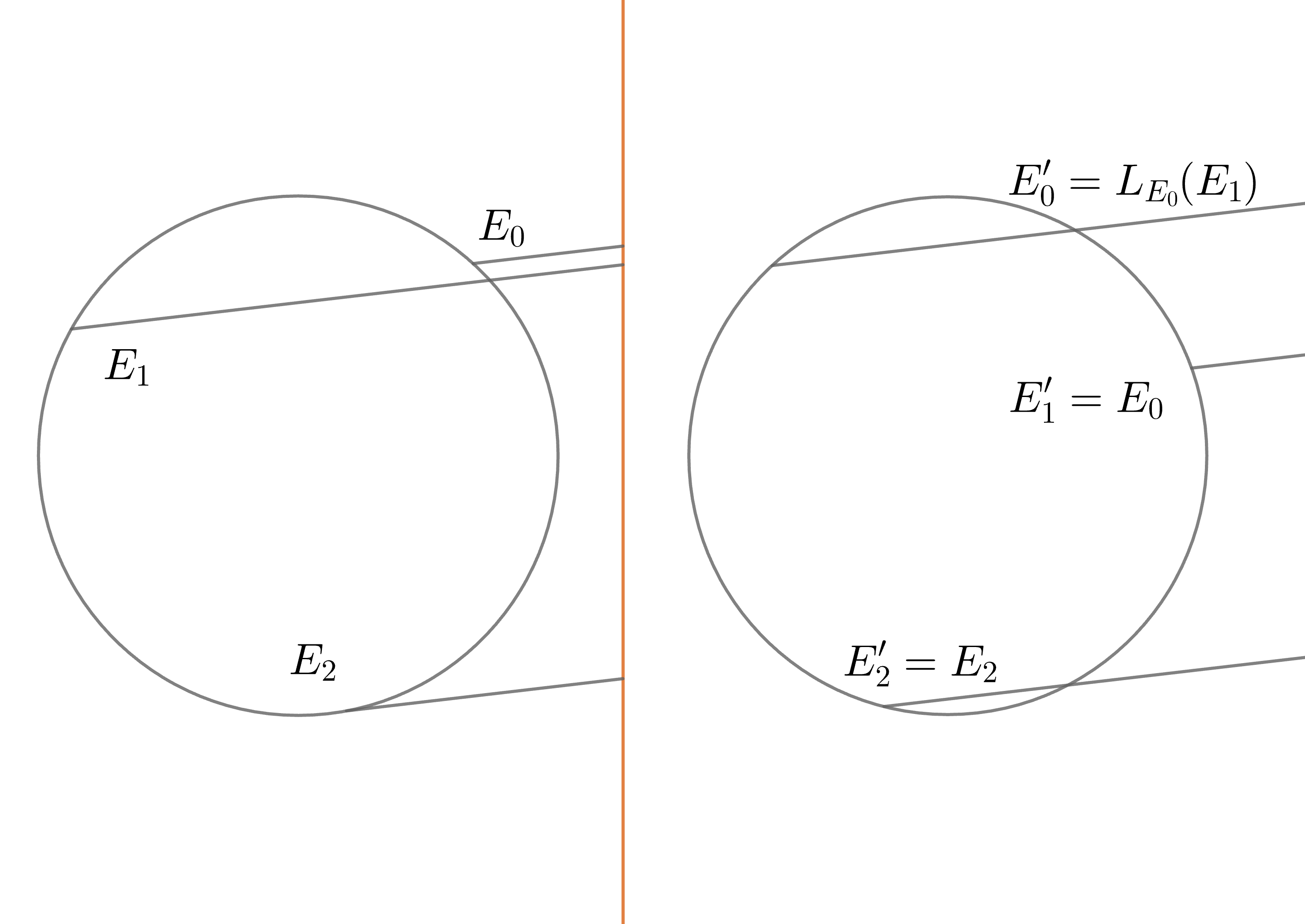}
\caption{Mutation of the exceptional collection caused by the variation of $\tau$.}
\label{figure_1}
\end{figure}
\end{proof}

\subsection{Dimension 2}
For the projective plane we have some stronger and simpler results.
\begin{prop}
    For the projective plane $\pr^2$ and a full strong exceptional collection of coherent sheaves, the quasi-convergent path constructed in the proof of  Theorem~\ref{thm_proposalIII_proj} starts in the geometric region of $\Stab(\mathrm{D^b}(\pr^2))$.
\end{prop}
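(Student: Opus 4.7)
The plan is to essentially mimic the proof of Corollary~\ref{cor_path_starts_in_geom_proj} but replace the use of Proposition~\ref{prop_beilinsonexc_geom_stab}, which is specific to the Beilinson exceptional collection $\{\Omega^i(i)\}$, with the stronger Proposition~\ref{prop_geom_alg_stabcond_P2}, which applies to \emph{any} full strong exceptional collection of coherent sheaves on $\pr^2$.

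First, I would fix an arbitrary full strong exceptional collection $\mathcal{E} = \{E_0, E_1, E_2\}$ of coherent sheaves on $\pr^2$ and run the construction of Theorem~\ref{thm_proposalIII_proj}: for a generic $\tau \in \H^2(\pr^2,\C)$ (so that the imaginary parts $\Im(-u_j(\tau))$ are distinct, which is automatic since $u_j(\tau) = 3\zeta^{i(j)} e^{\tau/3}$), Proposition~\ref{prop_extimated_Ztau} applies and Theorem~\ref{thm_gammaII_proposalIII} produces a quasi-convergent path $\sigma_r = (Z', \slicing_r)$, $r \in (0,\delta)$, of algebraic stability conditions in $\Theta_{\mathcal{E}}$ with each $E_j$ being $\sigma_r$-stable.

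Next, I would extract the crucial inequality on phases at the right endpoint: Theorem~\ref{thm_gammaII_proposalIII} guarantees the existence of some $\mu < 0$ (depending only on the imaginary parts of the $-u_j(\tau)$) such that for $r \sim \delta$ we have
\begin{equation*}
\phi_r(E_{j+1}) - \phi_r(E_j) < 1 + \tfrac{\mu}{2} < 1.
\end{equation*}
This is precisely the hypothesis of Proposition~\ref{prop_geom_alg_stabcond_P2}, which is the key dimension-$2$ input: on $\pr^2$, any stability condition in $\Theta_{\mathcal{E}}$ (with $\mathcal{E}$ a strong full exceptional collection of coherent sheaves) satisfying $\phi_{i+1} - \phi_i < 1$ is geometric. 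Therefore, for $r$ close to $\delta$, the skyscraper sheaves of all closed points are $\sigma_r$-stable of the same phase, so $\sigma_r$ lies in the geometric region of $\Stab(\mathrm{D^b}(\pr^2))$.

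The step requiring the most care is checking that the constructed path indeed lives in $\Theta_{\mathcal{E}}$ for the specific collection $\mathcal{E}$ one starts with; this is fine for $\tau$ near $0$ and for $\mathcal{E}$ obtained by the mutation procedure of the Gamma Conjecture II, but if one wishes to treat an arbitrary strong full exceptional collection of coherent sheaves, one invokes Example~\ref{example_exc_coll_P2} (due to Gorodentsev--Drezet) stating that on $\pr^2$ all such collections are equivalent, up to mutation and shift, to the Beilinson collection $\{\o,\o(1),\o(2)\}$, and one deforms $\tau$ to reach the mutation class of $\mathcal{E}$ as in the last part of the proof of Theorem~\ref{thm_conj_pr}. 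Beyond this bookkeeping, no new ingredient is needed: the main obstacle collapses because the dimension-$2$ geometricity criterion in Proposition~\ref{prop_geom_alg_stabcond_P2} only requires $\phi_{i+1}-\phi_i<1$, which is exactly what the construction delivers at $r \sim \delta$.
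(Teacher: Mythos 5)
Your proposal matches the paper's own argument: the paper likewise follows the reasoning of Corollary~\ref{cor_path_starts_in_geom_proj} --- stability of the $E_j$ on $(0,\delta)$ and the phase gap $\phi_r(E_{j+1})-\phi_r(E_j)<1$ for $r\sim\delta$ from Theorem~\ref{thm_gammaII_proposalIII} --- and then concludes by applying the $\pr^2$-specific criterion of Proposition~\ref{prop_geom_alg_stabcond_P2} in place of Proposition~\ref{prop_beilinsonexc_geom_stab}. Your extra remarks on $\Theta_{\mathcal{E}}$ and the Gorodentsev--Drezet classification are harmless bookkeeping beyond what the paper records, but the core argument is the same.
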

\begin{proof}
    We follow the same reasoning as in Corollary~\ref{cor_path_starts_in_geom_proj} and we conclude applying Proposition~\ref{prop_geom_alg_stabcond_P2}.
\end{proof}

\begin{thm}\label{thm_conj_P2}
    There exists a quasi-convergent path in $\Stab(\mathrm{D^b}(\pr^2))$ of stability conditions $\sigma_r=(Z^{\tau}, \slicing_r)$, $r\in(0,\delta)$ for some $\delta>0$ and $\tau\in \H^2(\pr^2,\C)$, such that the SOD given  at $r\to 0^+$ is $\{\o,\o(1),\o(2)\}$.
\end{thm}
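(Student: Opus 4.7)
The plan is to specialize the proof of Theorem~\ref{thm_conj_pr} to $\pr^2$, using Remark~\ref{rmk_shifted_beil_exc_collection} to run the Galkin--Golyshev--Iritani procedure with the Beilinson helix foundation $\{\o, \o(1), \o(2)\}$ (rather than the shifted $\{\o(\lceil N/2 \rceil), \ldots\}$ used in Theorem~\ref{thm_conj_pr}). What needs to be added to Theorem~\ref{thm_conj_pr} is a choice of $\tau \in \H^2(\pr^2, \C)$ and admissible phase $\phi$ for which the resulting exceptional collection $\{E_0, E_1, E_2\}$ is exactly $\{\o, \o(1), \o(2)\}$ and the indexing by $\Im(-u_j(\tau))$ matches the SOD order.

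First, I would fix $\tau_0 = 0$ and an admissible phase $\phi$ for the eigenvalues $\{3, 3\zeta, 3\zeta^2\}$ of $\mathcal{E}\star_0$ (with $\zeta = e^{2\pi\ir/3}$). The Galkin--Golyshev--Iritani procedure, starting from $\{\o, \o(1), \o(2)\}$, bends each ray $3\zeta^{-j} + \R_{\geq 0}\zeta^{-j}$ to direction $e^{\ir\phi}$, performing right or left mutations as the bent ray sweeps across other eigenvalues (Figure~\ref{figure_bending_mutation}). This yields $\{E_0, E_1, E_2\}$ with $\Phi^0(\hat{\Gamma}\cdot\Ch(E_j)) = x_j$. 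Proposition~\ref{prop_extimated_Ztau} then provides the asymptotics $\ln Z^\tau(E_j) \sim \frac{1}{2}\ln(2\pi r) - u_j(\tau)/r$ with $u_j(\tau) = 3\zeta^j e^{\tau/3}$ for $\tau$ in a neighbourhood of $0$, and Theorem~\ref{thm_gammaII_proposalIII} builds a quasi-convergent path $\sigma_r = (Z^\tau, \slicing_r)$ whose limit SOD, by Theorem~\ref{thm_conj_pr}(a), is $\langle E_0, E_1, E_2 \rangle$ indexed by increasing $\Im(-u_j(\tau))$.

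The remaining task is to choose $(\tau, \phi)$ so that $\{E_0, E_1, E_2\} = \{\o, \o(1), \o(2)\}$ in the correct SOD order. For $\pr^2$ the eigenvalue configuration is particularly simple --- three points equally spaced on a circle of radius $3|e^{\tau/3}|$ around the origin --- and the Galkin--Golyshev--Iritani mutations can be resolved explicitly. By direct inspection of which eigenvalues the bent rays cross for different values of $(\tau, \phi)$, one identifies a choice for which the composed mutation brings $\{\o, \o(1), \o(2)\}$ back to itself with the reordering induced by $\Im(-u_j(\tau))$ matching $\o, \o(1), \o(2)$. The Drezet--Pirozkhov classification recalled in Example~\ref{example_exc_coll_P2} ensures that the output collection always lies in the mutation class of $\{\o, \o(1), \o(2)\}$, which is the key input to this matching.

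The main obstacle is precisely this explicit combinatorial/geometric check: tracking the sequence of right/left mutations triggered as the bent rays cross eigenvalues, and verifying that the composed mutation is trivial in the ordering prescribed by the imaginary parts of $u_j(\tau)$. Once this is carried out, Theorem~\ref{thm_conj_pr}(a) immediately yields the desired quasi-convergent path with limit SOD $\{\o, \o(1), \o(2)\}$.
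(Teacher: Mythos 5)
Your overall strategy is the same as the paper's: run the Galkin--Golyshev--Iritani bending procedure on the helix foundation $\{\o,\o(1),\o(2)\}$, get the asymptotics from Proposition~\ref{prop_extimated_Ztau}, and feed them into Theorem~\ref{thm_gammaII_proposalIII} so that the limit SOD is the collection ordered by $\Im(-u_j(\tau))$. However, there is a genuine gap: the entire content of the theorem, beyond this general machinery, is precisely the step you defer as ``the main obstacle,'' namely exhibiting a concrete pair $(\tau,\phi)$ for which the composed mutations are trivial \emph{and} the resulting order of $\Im(-u_j(\tau))$ matches $\o,\o(1),\o(2)$. You assert that ``by direct inspection \ldots one identifies a choice,'' but you never produce it or verify it, and the existence of such a choice is not obvious: for $\tau$ near $0$ (the setting in which you first invoke Proposition~\ref{prop_extimated_Ztau}) no choice of admissible phase works, since with $u_j(\tau)=3\zeta^{-j}e^{\tau/3}$ the required ordering $\Im(u_0)>\Im(u_1)>\Im(u_2)$ fails outright. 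The paper's proof is exactly this missing verification: one first rotates the admissible phase from $0<\phi\ll1$ to $\phi'=\tfrac{\pi}{2}-\tfrac{1}{100}$ (one mutation, as the ray from $u_1$ crosses $u_0$), then moves $\tau$ along $\tau(t)=t\,3(\tfrac{\pi}{2}+\tfrac{1}{100})\ir$ (a second mutation, as $u_0$ crosses the ray of $u_1$), checks that the two mutations compensate so that the asymptotic basis at $\tau(1)$ is attached to $\{\o,\o(1),\o(2)\}$ itself, and only then verifies $\Im(u_0(\tau(1)))>\Im(u_1(\tau(1)))>\Im(u_2(\tau(1)))$ before applying the construction of Theorem~\ref{thm_gammaII_proposalIII}. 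Without some such explicit bookkeeping your argument establishes only that \emph{some} mutation of $\{\o,\o(1),\o(2)\}$ arises as a limit SOD, which is already Theorem~\ref{thm_proposalIII_proj}, not the statement at hand.

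Two smaller points. The appeal to the Drezet/Pirozkhov results of Example~\ref{example_exc_coll_P2} does not help: the output of the bending procedure is by construction a mutation of the input collection, so membership in the mutation class is automatic, and in any case being in the mutation class is far weaker than being equal to $\{\o,\o(1),\o(2)\}$ in the prescribed order. Also, since the $\tau$ that ultimately works is not close to $0$, you should justify the use of Proposition~\ref{prop_extimated_Ztau} there via the nonvanishing $\int_{\pr^2}\Psi^\tau_j\neq 0$ on the whole connected component (as checked in Theorem~\ref{thm_proposalIII_proj}), rather than via the ``neighbourhood of $0$'' remark.
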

\begin{proof}
    By Proposition~\ref{prop_extimated_Ztau} we have that 
    \begin{equation*}
        Z^{0}(\o(j))\sim 2\pi z e^{-u_j(0)/z} \text{ as }z\to 0^+ \text{ in the sector } |\arg(z)-\phi|<\frac{\pi}{2}+\epsilon
    \end{equation*}
    where $0<\phi\ll 1$ is an admissible phase.
    
    We now change admissible phase and choose $\phi'=\frac{\pi}{2}-\frac{1}{100}$, during the move the integration ray starting at $u_1$ crosses $u_0$ from the right.
    Then we move $\tau$ form $0$ to $3(\frac{\pi}{2}+\frac{1}{100})\ir$ on the path $\tau(t):=t3(\frac{\pi}{2}+\frac{1}{100})\ir$, $t\in[0,1]$ and $u_j$ transform as follows
    $$u_j(\tau(t))=3e^{-j2\pi\ir/N }e^{t(\frac{\pi}{2}+\frac{1}{100})\ir}$$
    for $t\in[0,1]$.
    
    During the move  $u_0$ crosses the ray of $u_1$ from the right.
    Ad the end of the two moves the exceptional collection undergoes two mutations that compensate each other, so we get that the asymptotically exponential solution at $\tau(1)$ is associated to the exceptional collection 
    \begin{equation*}
        \{\o,\o(1),\o(2)\}
    \end{equation*}
    and we have
    \begin{equation*}
        \ln{Z^{\tau(1)}(\o(j))}\sim \frac{\mathrm{dim}(F)}{2} \ln{(2\pi z)}-u_j(\tau(1))/z \text{ as } z\to 0 \text{ in the sector } |\mathrm{arg}(z)-\phi'|<\frac{\pi}{2}+\epsilon
    \end{equation*}
      Clearly we have 
    $$\Im(u_0(\tau(1)))>\Im(u_1(\tau(1)))>\Im(u_2(\tau(1)))$$
    At this point we proceed as in the proof of Theorem~\ref{thm_gammaII_proposalIII} and we get a path defined for $z=r\in (0,\delta)$ as in the  claim.
\end{proof}

\end{document}